\DeclareMathAlphabet{\pazocal}{OMS}{zplm}{m}{n}
\numberwithin{equation}{section}
\numberwithin{equation}{subsection}
\theoremstyle{plain}
\newtheorem{theorem}[equation]{Theorem}
\newtheorem{lemma}[equation]{Lemma}
\newtheorem{proposition}[equation]{Proposition}
\newtheorem{corollary}[equation]{Corollary}
\theoremstyle{definition}
\newtheorem{example}[equation]{Example}
\newtheorem{remark}[equation]{Remark}
\newtheorem{definition}[equation]{Definition}
\newcommand{\bC}{{\mathbb C}}
\newcommand{\bZ}{{\mathbb Z}}
\newcommand{\calC}{{\mathcal C}}
\newcommand{\cV}{{\mathcal V}}
\newcommand{\cA}{{\mathcal A}}
\newcommand{\cS}{{\mathcal S}}\newcommand{\calS}{{\mathcal S}}
\newcommand{\cP}{\pi}
\newcommand{\calO}{{\mathcal O}}
\newcommand{\frR}{\mathfrak{R}}
\newcommand{\bPE}{{\bf PE}}
\newcommand{\calQ}{{\mathcal Q}}
\newcommand{\tX}{\widetilde{X}}
\newcommand{\C}{{\calc}}
\newcommand{\rank}{{\rm rank}\, }
\newcommand{\bt}{{\bf t}}
\newcommand{\bH}{{\mathbb H}}
\newcommand{\calv}{\mathcal{V}}
\newcommand{\Z}{\mathbb{Z}}
\newcommand{\Q}{\mathbb{Q}}
\newcommand{\R}{\mathbb{R}}
 \newcommand{\frX}{\mathfrak{X}}
\newcommand{\calt}{{\mathcal T}}
\def\C{\mathbb C}
\def\Q{\mathbb Q}
\def\R{\mathbb R}
\def\bH{\mathbb H}
\def\Z{\mathbb Z}
\newcommand{\h}{\hbar}
\author{Andr\'as N\'emethi}
\thanks{The author is partially supported by  ``\'Elvonal (Frontier)'' Grant KKP 144148}
\address{Alfr\'ed R\'enyi Institute of Math.,
Re\'altanoda utca 13-15, H-1053, Budapest, Hungary \newline
 \hspace*{3mm} ELTE - Univ. of Budapest, Dept. of Geo.,
 P\'azm\'any P\'eter s\'et\'any 1/A, 1117, Budapest, Hungary \newline \hspace*{3mm}
  BBU - Babe\c{s}-Bolyai Univ., Str, M. Kog\u{a}lniceanu 1, 400084 Cluj-Napoca, Romania
 \newline \hspace*{3mm}
BCAM - Basque Center for Applied Math.,
Mazarredo, 14 E48009 Bilbao, Basque Country, Spain}
\email{nemethi.andras@renyi.hu }
\title{Filtered  lattice homology of surface singularities}
\begin{document}

\keywords{}
\subjclass[2010]{Primary. 32S05, 32S10, 32S25, 57K18;
Secondary. 14Bxx, 57K10, 57K14}

\begin{abstract}
Let $(X,o)$ be a complex analytic normal surface  singularity with rational homology sphere link $M$. The  `topological' lattice cohomology  $\bH^*=\oplus_{q\geq 0}\bH^q$
associated with $M$ and with any  of its spin$^c$ structures was  introduced in \cite{Nlattice}.
Each $\bH^q$ is a graded $\Z[U]$--module. Here we consider its homological version $\bH_*=\oplus_{q\geq 0}\bH_q$.
The construction uses a Riemann-Roch type weight function.
A key intermediate product is  a tower of spaces
$\{S_n\}_{n\in \Z}$ such that $\bH_q=\oplus_n H_q(S_n,\Z)$.

In this article we fix the embedded topological type of a
 reduced  curve singularity $(C,o)$ embedded into $(X,o)$, that is, a 1-dimensional link
  $L_C\subset M$. Each component of $L_C$ will also carry a non-negative integral decoration.
  For any fixed $n$,  the embedded link   $L_C$ provides
  a natural filtration of the space $S_n$, which induces
a homological spectral sequence converging to the homogeneous summand $H_q(S_n,\Z)$ of the lattice homology.
All the entries of all the pages of the spectral sequences are new invariants of the decorated $(M,L_C)$. Each page provides a triple graded $\Z[U]$-module.

We provide several concrete computations of these pages and
structure theorems for the corresponding multivariable Poincar\'e series associated
with the entries of the spectral sequences.

Connections with Jacobi theta series are also discussed.
\end{abstract}

\maketitle

\linespread{1.2}


\pagestyle{myheadings} \markboth{{\normalsize   A. N\'emethi}} {{\normalsize
Filtered lattice homology of surface  singularities}}

\section{Introduction}

\subsection{What we construct?}
The main goal of the present note is to create  the theoretical machinery
 of the   {\it filtered  lattice (co)homology}.
 Though the construction is general, here we will run the case
 of the {\it topological lattice homology}
  associated with the topological type of a complex analytic
 normal surface singularity  $(X,o)$.
  Recall that the topological type of such a singular germ $(X,o)$ is characterized by its link $M$, which is an oriented compact  plumbed 3--manifold associated with a connected negative definite plumbing graph. In this note we will assume that $M$ is a rational homology sphere.
  For details regarding the surface singularities see e.g. \cite{Nkonyv} and the references therein.
 The topological lattice (co)homology associates with  each    spin$^c$ structure of $M$ a double
 graded $\Z[U]$ module (and also a graded root), see e.g.
  \cite{Nlattice,Nkonyv}. This is a bridge between singularity theory and low dimensional topology.
  For certain applications in singularity theory see \cite{BoNe,GorNem2015,NGr,JEMS,Inv},
  for some applications  in low dimensional topology see e.g. \cite{AJK,DM,K2,KarakurtLidman,K1,KS,Zemke}.

In this note the construction of the lattice homology (for any fixed  spin$^c$ structure)
will be enhanced by  several filtrations.
In the simplest case, the  case  of the  $\Z$--filtration,
 one of the outputs of the theory is a $\Z$--filtration of the lattice homology,
 whose associated graded module --- the `graded lattice homology' ---
is  triple graded and carries a torsion $\Z[U]$--module structure.

In fact, the construction automatically provides a (multigraded)  homological spectral sequence  whose
$E^\infty$ page is exactly the `graded lattice homology' mentioned above.
However, the point is that for every  $k\geq 1$ the page $E^k$ of the spectral sequence provides  a
triple graded $\Z$--module with a torsion $\Z[U]$--module structure. The index $k$ where the
spectral sequence degenerates (i.e. $E^k=E^\infty$) usually is large, so in this way we obtain a long
tower of triple graded $\Z[U]$--modules indexed by $k$.
In general,  the computation of this degeneration index $k$ and of the intermediate pages $E^k$
are  very hard.
This degeneration index    is the analogue
of the $\tau$--invariant of Ozsv\'ath and Szab\'o \cite{OSztau}, or of the $s$--invariant of Rasmussen \cite{ras_s} in the context of
Heegaard Floer Link and Khovanov theories.

The filtration is induced geometrically. In the analytic context we can rely on  an analytic pair $(C,o)\subset (X,o)$, where $(C,o)$ is an isolated curve singularity embedded into $(X,o)$.
In the topological  context
we start with the induced topological pair, their embedded links $L_C\subset M$, where
$L_C=\sqcup S^1$ (and the number of $S^1$'s is the number $r$ or irreducible components of $(C,o)$).
In a convenient  plumbing graph of $M$ the embedded $L_C$ can be represented by $r$ arrowheads
(or unmarked vertices).

Since the pair $L_C\subset M$ can be represented by several plumbing graphs ---
 connected to each other  by a sequence  of
elementary blow ups ---, it is natural to test
 the stability of the filtered theory with respect to  such blow ups.
 (Recall that the lattice homology is stable with respect to any blow up, it is a 3-manifold
  invariant of $M$.)
 It turns out that the filtered theory is not stable with respect to blow ups of `edges supporting the arrowheads' (we will call such centers `base points'), but it is stable with respect to any other
 blow up. Hence, for each component of $L_C$ we will fix the number of how many times we blow up
 the supporting edge of the arrowhead of the corresponding component
 (the number of `base point blow ups' of each component).
 This provides an integral  decoration $(b_1,\ldots , b_r)$
 of the components $\{L_{C_i}\}_i$ of $L_C$. In particular, we obtain a well--defined theory for each
 decorated pair $(M, L_C,b)$, well defined up to decoration preserving
  (non-base point) blow ups. That is, {\bf for any
 negative definite plumbed 3--manifold and embedded (plumbed) link components $L_{C_1},\ldots, L_{C_r}$
 decorated by $(b_1,\ldots, b_r)\in(\Z_{\geq 0})^r$,   for any
 $k\geq 1$ we obtain  a triple graded $\Z[U]$--module $(E^k_{-d,q})_n$;} see \ref{ss:ss}.

 Its Poincar\'e series will be denoted by $PE_k(T,Q,\h)$. 

 Technically, it will be convenient to replace the information regarding the  embedded decorated
 $(L_C,b)$ by an equivalent object, namely
    by a semigroup element $s$ in the integral Lipman cone $\calS$
    of the lattice associated with the plumbing graph of $M$
    (up to an equivalence relation, see \ref{bek:semi}).

 \subsection{Motivations} \
 The construction has several  motivations. We mention here three of them.

 \vspace{2mm}

\noindent  {\bf (I)}\ {\it Improvements of lattice (co)homology}

Recently several (co)homology theories were introduced in low dimensional topology, gauge theory, etc.
In all these cases construction of additional filtrations and gradings successfully provided stronger invariants. This is the main principle what we follow in our case as well.

In present in the literature there are several versions of the lattice cohomology. The first
original one, introduced in \cite{Nlattice},
is the topological lattice cohomology associated with surface singularity links.
(For a presentation in a different language see also \cite{OSZSt2,OSZSt3,OSZSt}.)
It is the categorification of the Seiberg--Witten invariant of the link $M$. Later, the analytic version was also constructed
(as categorification of the geometric genus) \cite{AgostonNemethiI,AgostonNemethiIII},
even the higher dimensional analytic version were established \cite{AgostonNemethiIV},
all of them via resolutions.
By a different technique (namely, by normalization),  the construction was extended  for isolated curve singularities as well,
as categorification of the delta invariant \cite{AgostonNemethi}.

The present note provides a precise recipe how one can create additional filtrations on these theories.
In particular,  any filtration or grading of a homology theory provides automatically a grading (sum decomposition)
of the corresponding Euler characteristic. In fact, due to the presence of a spectral sequence,
a whole tower of  multivariable Poincar\'e series is provided.

 E.g., in the present case, the Euler characteristic $eu((\bH_*)_*)$ of the topological lattice
 homology is the (normalized) Seiberg--Witten invariant of $M$.
   By fixing a decorated pair $(M,L_C,b)$ we can consider
    the $E^\infty$ page of the spectral sequence, which  provides a triple graded object
    ${\rm Gr}_*((\bH_*)_*)$ with Poincar\'e series
 $PE_\infty(T,Q,\h)$. It turns out that $Pol_{SW}(Q):= PE_\infty(1,Q,-1)-1/(1-Q)$ is a polynomial in
 $Q$ which evaluated at $Q=1$ is the (normalized) Seiberg--Witten invariant of $M$. Hence,
 any embedded $(L_C,b)$ into $M$ defines a polynomial
  $Pol_{SW}(Q)$, a graded improvement of the  Seiberg--Witten invariant of $M$.

 \vspace{2mm}

  \noindent {\bf (II)} \ {\it The relative geometry of pairs}

In our case, the filtration (or induced grading) is induced by the relative geometry of the pair
$(X,C,o)$, or by $(M,L_C)$ (where $L_C$ is decorated). The study of such pairs has a long history and it is the subject of an intense activity. It starts with the very classical
theory of embedded knots/links in $S^3$. This even in the algebraic case, in the
classification of plane curve singularities $(C,o)\subset (\bC^2,0)$, is highly nontrivial
and it has many  connections with several areas of mathematics.
The natural generalizations to  pairs
$(C,o)\subset (X,o)$ target three research projects: (a) the analytic classification of such pairs,
(b) the topological classification of the embedded topological  types of such pairs, and (c)
 connections between the embedded topological types of such a  pair and the abstract analytic type of the curve $(C,o)$. E.g., regarding part (c),  \cite{CLMN1,CLMN2} ask whether the delta invariant of the abstract curve $(C,o)$ can be recovered from the embedded topological type of  $(M, L_C)$.
(E.g., if $M$ is an L-space , that is, if $(X,o)$ is rational, then the answer is yes.).
The present  note creates the filtered lattice homology for the pair $(M,L_C,b)$ (which naturally
extends to  the analytic case of $(X,C,b)$ as well). Note that
  the recent manuscript \cite{NemCurves} contains the construction of the filtered lattice homology of abstract curve singularities.
  In forthcoming manuscript we plan to compare these three filtered theories.

One can compare the results of the present note with the Heegaard Floer Link theory, the invariant associated with a link in a 3-manifold (sometimes in an L-space). Having in mind for singularity links the
coincidence of the lattice homology with the Heegaard Floer homology \cite{Zemke},
it  is very natural to compare  the present work with
the Heegaard Floer Link theory \cite{M} applied for algebraic links $(M,L_C)$,
or with another construction of a filtration in the lattice homology (under the restriction that  $L_C$ is a knot) \cite{OSZSt3}, or even with the recent manuscript \cite{BLZ}.
These comparisons   will also
 be considered in   forthcoming notes.

  \vspace{2mm}

 \noindent {\bf (III)} \ {\it  Jacobi Theta Series}

 The third motivation, interestingly enough, is arithmetical, and hopefully will create  a new bridge with
 number theory and  with the theory of theta series.  A classical theta series is a sum
  $\sum_{l\in \Z}q^{Q(l)}$, where $Q(l)$ is a quadratic function;
  its multivariable version is a sum of type
  $\sum _{l\in \Z^s} q^{\langle Al, l\rangle} $
  (where $\langle\,,\,\rangle$ is the standard scalar product and $A$ is a definite  matrix).
  Their generalizations, the Jacobi theta series
  are sums of type $\sum_{l\in\Z}q^{Q(l)}t^{cl}$, or
   $\sum _{l\in \Z^s} q^{\langle Al, l\rangle }t^{\langle Al, c\rangle }$,  (where
    $c$ is a constant or a scalar vector).

 It turns out that the Poincar\'e series associated with the pages $E^1$ and $E^\infty$ (the pages
 where we provide complete general descriptions)
 can be organized as finite sums of type
 $\sum _{l\in (\Z_{\geq 0})^s} Q^{\langle Al, l+c'\rangle }T^{\langle Al, c\rangle }$.
Here the quadratic part is essentially provided by the intersection matrix of the plumbing and the
constant vector $c'$  by the fixed spin$^c$ structure.

 In the body of the paper we provide many examples and prove structure theorems supporting this statement.
 In fact, in this note  once we finish the main construction, basically we focus on such structure theorems of the Poincar\'e series (and we postpone the other connections with Heegaard Floer theory, singularity theory, deformation theory to the next notes). In this way we wish to open a completely new bridge with new unexploited possibilities in the direction of such series.

 This part was greatly inspired by the theory of $\widehat{Z}$ series, introduced in
  \cite{GPPV} and intensively studied by several authors and  schools.  In that case the series  introduced can indeed be related with Jacobi theta series and  (mock) modular forms, however its categorification is missing
 (though there is a huge effort to find it). In the present note we wish to present a parallel
 situation, provided by singularity theory, in which case we have the series, it comes from
 a categorification (namely from the  lattice homology theory),
 however, the Poincar\'e series   are not
 `honest' Jacobi theta series:
  they are `one sided' (first quadrant)  sums of type \  $\sum _{l\in (\Z_{\geq 0})^s}$ of summands of
   `correct shape'.
   We really hope that such sums will also have their arithmetic rigidity consequences, as
  the modular forms have their applicability in different parts of geometry.

  \subsection{The main results, concrete statements}\label{ss:MR}

\bekezdes
We fix a resolution/plumbing  graph $\Gamma$ of $M$ with vertices $\cV$.
Let $L=\Z^{|\cV|}$ be  the free lattice
generated by the vertices of $\Gamma$ (with generators denoted by $\{E_i\}_i$) and let
 $L'$ be its dual lattice (generated by dual vectors $\{E_i^*\}_i$).
   We denote  the intersection form by $(\,.\,)$. Set $H:=L'/L$, it equals $H_1(M,\Z)$.
We also fix a class $h\in H$ (that is, a spin$^c$ structure on the link, cf. \cite{NOSz,Nkonyv} or
\ref{bek:invs} here). Let $Z_K\in L'$ be the anticanonical cycle, and $s_h\in L'$ the smallest representative of $h$ in the Lipman cone, cf.  \ref{bek:invs}. Finally,  let
the distinguished characteristic element $-Z_K+2s_h$ associated with  $h$ be denoted by $k_h$.
We define the
weight function of the lattice cohomology by the Riemann-Roch expression
 $\chi_{h}:L\to \Z$ by $\chi_{h}(l)=-\frac{1}{2}(l, l+k_h)$.
We consider the cubical decomposition of $\R^{|\cV|}$ with lattice point vertices,
and we define the weight of a $q$--cube $\square_q$ by
$w_h(\square_q):=\max\{\chi_{h}(v), \ \mbox{where $v$ is a vertex of $\square_q$}\}$, cf. \ref{9complex}. 

Finally,  for each $n\in \Z$ we
define $S_n=S_n(w_h)\subset \R^{|\cV|}$ as the union of all
the cubes $\square_q$ (of any dimension) with $w_h(\square_q)\leq
n$. Clearly, $S_n=\emptyset$, whenever $n<m_w:=\min_{l\in L}\{w_h(l)\}$. For any  $q\geq 0$, set
$$\bH_q(\R^{|\cV|},w_h):=\oplus_{n\geq m_w}\, H_q(S_n,\Z).$$
Then $\bH_q$ is $\Z$ (in fact, $2\Z$)--graded: the
$(-2n)$--homogeneous elements $(\bH_q)_{-2n}$ consist of  $H_q(S_n,\Z)$.
Also, $\bH_q$ is a $\Z[U]$--module; the $U$--action is the homological morphism
$H_q(S_{n},\Z)\to H_q(S_{n+1},\Z)$ induced by the inclusion $S_n\hookrightarrow S_{n+1}$.
This is the lattice homology $\bH_*$ of $M$, it is independent of the choice of the negative definite graph $\Gamma$.

Moreover, for
$q=0$, a fixed base-point in $S_{m_w}$ provides an augmentation
(splitting)
 $H_0(S_n,\Z)=
\Z\oplus \widetilde{H}_0(S_n,\Z)$, hence a splitting of the graded
$\Z[U]$-module
$$\bH_0=(\oplus_{n\geq m_w}\Z)\oplus (
\oplus_{n\geq m_w}\widetilde{H}_0(S_n,\Z))=(\oplus_{n\geq m_w}\Z)\oplus   \bH_{0,red}.$$
We also write $\bH_{q,red}=\bH_q$ for $q\geq 1$. Then the $\Z$--rank of $\bH_{*,red}$ is finite.
We define the Euler  characteristic of $\bH_*$ as
$$eu(\bH_*):=-\min_l\{w_h(l)\} +
\sum_q(-1)^q\ \rank_\Z(\bH_{q,red}).$$

\bekezdes Let us fix now a decorated pair $(M, L_C,b)$ coded in an `embedded' resolution/plumbing graph,
and an element $h\in L'/L$ as above. Hence, using a plumbing representation of $(M,s_h)$ we can produce the weight functions $w_h$ and the  spaces $S_n$ as above.
The decorated link $(L_C,b)$ can also be codified by  a semigroup element $s$ of the Lipman cone,
$s=\sum_{i\in {\rm Supp(s)}}n_iE_i^*$. Here $n_i$ is the number of arrows supported by $E_i$, hence $r=\sum_in_i$ is the number of components of $L_C$.
 The semigroup element $s$ identifies an increasing
filtration $\{\frX_{-d}\}_{d\geq 0}:= \{\cup(l+\R^{|\cV|}) \,:\, l\in L,\  (s,l)\leq -d\}$ of $\frX=(\R_{\geq 0})^{|\calv|}$, hence an
increasing filtration $\{S_n\cap \frX_{-d}\}_{d\geq 0}$ of any $S_n$. Hence we get a graded $\Z[U]$--module
$\bH_*(\frX_{-d},w_h)=\oplus_n H_*( S_n\cap \frX_{-d},\Z)$ for every $d$ and a sequence
 of graded $\Z[U]$--module morphisms
$\bH_*(\frX,w_h)\leftarrow \bH_*(\frX_{-1},w_h)\leftarrow \bH_*(\frX_{-2},w_h)\leftarrow \cdots.$
For each $q$, the map $\bH_q(\frX,w_h)\leftarrow \bH_q(\frX_{-d}, w_h)$ induced at lattice homology level is
homogeneous of degree zero.
These morphisms provide the following filtration of  $\Z[U]$--modules in $\bH_*(\frX,w_h)$
$${\rm F}_{-d}\bH_*(\frX,w_h):={\rm im}\big( \bH_*(\frX,w_h)\leftarrow \bH_*(\frX_{-d},w_h)\, \big).$$
This is the filtration of $\bH_*=\bH(\frX,w_h)$ induced by $s$ (or, by $(L_C,b)$).

Note that even if we know the homotopy type of the space $S_n$ (e.g. when we know that it is
contractible), this knowledge usually does not help in the determination of the filtered subspaces
$\{S_n\cap \frX_{-d}\}_d$. For this (see examples in the body of the paper)
we really need to
determine the spaces $S_n$ as embedded cubical subspace of $\frX=\R^{|\cV|}$. It might
 have  a rather complex shape with many  `lagoons' and `tentacles', and this shape makes
 the $E^k$ pages full with additional information.

\bekezdes In fact, we can do more.
Let us  fix $n$.
The morphism  $(\bH_*(\frX_{-d}, w_h))_{-2n}\to (\bH_*(\frX,w_h))_{-2n}$ is identical with
the morphisms $  H_*(S_n\cap \frX_{-d}, \Z)\to  H_*(S_n,\Z)$ induced by the inclusion
$S_n\cap \frX_{-d}\hookrightarrow S_n$.
It turns out that the filtration $\{S_n\cap \frX_{-d}\}_{d\geq 0}$ is finite.
 In particular, one can analyse the spectral sequence
associated with the filtration $\{S_n\cap \frX_{-d}\}_{d\geq 0}$ of subspaces of $S_n$.

This homological  spectral sequence will be denoted by
 $(E^k_{-d,q})_n\Rightarrow (E^\infty_{-d,q})_n$. Its terms $E^1$ and $E^\infty$ are the following:
  \begin{equation*}\begin{split}
  (E^1_{-d,q})_n=& H_{-d+q}(S_n\cap \frX_{-d}, S_n\cap \frX_{-d-1},\Z),\\
   (E^\infty_{-d,q})_n=& \frac{(F_{-d}\, \bH_{-d+q}(\frX))_{-2n}}
   { (F_{-d-1}\, \bH_{-d+q}(\frX))_{-2n}}=({\rm Gr}^F_{-d}\, \bH_{-d+q}(\frX)\,)_{-2n}.
   \end{split}\end{equation*}

Theorem  \ref{th:eq} and Theorem \ref{th:redth2} imply the following stability statements:

\begin{proposition}\label{prop:stab_int}
Each $\Z$--module $(E^k_{-d,q})_{n}$ is well-defined module associated with
$(\Gamma, s)$ and $h\in H$,  and  it is independent of the choice of $(\Gamma, s)$  up to non-base point blow ups.
%
\end{proposition}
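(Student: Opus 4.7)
The plan is to reduce the statement to the case of a single elementary non-base-point blow up $\pi \colon \Gamma' \to \Gamma$, and then to exhibit a filtered homotopy equivalence between the pair $(S_n, \{S_n \cap \frX_{-d}\}_d)$ associated with $(\Gamma, s)$ and the pair $(S'_n, \{S'_n \cap \frX'_{-d}\}_d)$ associated with $(\Gamma', s')$. Since any two plumbing representations of the decorated pair $(M, L_C, b)$ are connected by a sequence of such blow ups and their inverses, this reduction will suffice, and once the filtered homotopy equivalence is established, standard spectral sequence machinery delivers an isomorphism of spectral sequences from $E^1$ onward.

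First, I would invoke the already established blow-up invariance of the unfiltered theory (Theorem \ref{th:eq}), which produces, for each $n$, a natural cubical deformation retraction relating $S'_n$ to $S_n$ compatible with the $\Z[U]$-structure. Next, I would track how the semigroup element $s = \sum_i n_i E_i^*$ transforms under $\pi$: for a non-base-point blow up, the arrow-bearing vertices are preserved and the corresponding dual generators pull back consistently, yielding $s' = \sum_i n_i E_i^{*,\prime}$ satisfying the compatibility
\[
(s, l) \;=\; (s', \pi^{*}l) \qquad \text{for every } l \in L,
\]
where $\pi^{*} \colon L \to L'$ is the lattice pullback. This is exactly the place where the non-base-point hypothesis is used: a blow up on an arrow-supporting edge would alter the decoration $b$ and introduce a new arrow-bearing vertex, breaking the pullback identity above.

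The compatibility translates into an identification of the filtration half-spaces $\{(s, \cdot) \leq -d\}$ with $\{(s', \cdot) \leq -d\}$, so that $\frX_{-d}$ corresponds to $\frX'_{-d}$ under the blow-down/blow-up. The main technical step, and what I expect to be the principal obstacle, is to upgrade the unfiltered homotopy equivalence $S'_n \simeq S_n$ of Theorem \ref{th:eq} to a \emph{filtered} equivalence, that is, one that respects the entire tower $\{\frX_{-d}\}_d$ simultaneously. My proposed route is to re-examine the explicit cubical retraction used to prove \ref{th:eq} and verify that each elementary collapse occurs within a single filtration stratum; concretely, the collapsed cubes are contained in fibers of the semigroup-pairing map $l \mapsto (s, l)$, so the retraction restricts to each $S_n \cap \frX_{-d}$ separately. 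Theorem \ref{th:redth2} (the reduction theorem) should be the formal tool that makes this verification uniform in $d$.

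With the filtered quasi-isomorphism in hand, the induced morphism of filtered chain complexes is a quasi-isomorphism on each $E^1$-term, hence an isomorphism of spectral sequences on every page $E^k$ for $k \geq 1$, proving well-definedness of $(E^k_{-d,q})_n$ as a function of $(M, L_C, b)$ and $h$. Finally, I would record that base-point blow ups are genuinely excluded and not merely outside the current method: they change $(L_C, b)$ as a decorated link, and already at the level of $E^\infty$ the associated triple-graded module is sensitive to such a change, so invariance simply cannot hold in that wider class.
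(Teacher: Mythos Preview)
Your overall strategy is correct and matches the paper's: reduce to a single non-base-point blow up, establish a filtered homotopy equivalence between the towers $\{S_n\cap\frX_{-d}\}_d$ and $\{S'_n\cap\frX'_{-d}\}_d$, and conclude that the spectral sequences agree from $E^1$ on. The key compatibility $(s,l)=(s',\pi^*l)$ you isolate is exactly what is needed, and your remark that it fails precisely for base-point blow ups is on target.

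However, you are working harder than necessary and you misattribute one of the tools. Theorem~\ref{th:eq} already contains the filtered statement you say you need to ``upgrade'' to: its parts (c)--(e) assert directly that $\psi_*$ induces homotopy equivalences $S_{1,n}\cap\frX_{-d}(\{s_{1,j}\})\to S_{2,n}\cap\frX_{-d}(\{s_{2,j}\})$ and of the corresponding pairs. The mechanism there is not a stratum-by-stratum collapse but rather that $\psi_*:S_{1,n}\to S_{2,n}$ is a quasi-fibration and the filtration on the source is the $\psi_*$-pullback of the filtration on the target (this is your identity $(\psi_*)^{-1}(\frX_{-d})=\frX'_{-d}$), so the restriction over each filtered piece is again a quasi-fibration with contractible fibers. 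Once that is in hand, the paper's proof of Proposition~\ref{prop:stab_int} is a one-line citation of Theorem~\ref{th:eq}.

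Your appeal to Theorem~\ref{th:redth2} is misplaced: that is the \emph{Filtered Reduction Theorem}, which concerns passing from the full lattice to the sublattice spanned by an SR--set $\overline{\cV}$, not blow-up invariance. It plays no role in the present statement; it is used for part (b) of the fuller Proposition~\ref{prop:stab}, where one compares the spectral sequence to its reduced-rank counterpart under the hypothesis ${\rm Supp}(s)\subset\overline{\cV}$.
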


 Thus,  for every $1\leq k\leq \infty$, we also have the Poincar\'e series associated with $(\Gamma, s)$ and $h\in H$ 
 $$PE_{h,k}(T,Q,\h)=PE_{k}(T,Q,\h):=\sum_{d,q,n} \ \rank (E_{-d,q}^k)_{n}\cdot T^dQ^n\h^{-d+q}\in\Z[[P,Q]][Q^{-1},\h].$$

For any fixed $n$, the natural inclusion $S_n\hookrightarrow S_{n+1}$
is compatible with the filtration, hence the inclusion
$\{S_n\cap \frX_{-d}\}_d\hookrightarrow \{S_{n+1}\cap \frX_{-d}\}_d$
at the level of filtered spaces  induces a morphism $U$ of spectral sequences
$(E^k_{-d.q})_n\to (E^k_{-d,q})_{n+1}$ compatibly  with the differentials
$(d^k_{-d.q})_n$ and $(d^k_{-d,q})_{n+1}$.


\begin{lemma}\label{lem:torsion_int}
For any fixed $n$ there exists $\delta(n)\in \Z_{>0}$ such that for any $(d,q)$ the morphisms
$$U^{\delta(n)}\,:\, (E^1_{-d.q})_n\to (E^1_{-d,q})_{n+\delta(n)} \ \ \mbox{is trivial}. $$
In particular, $U^{\delta(n)}$ at the level of the $E^k$ ($k\geq 1$) pages is trivial too.
In this way, for any $k\geq 1$, the triple graded $\Z$--module $(E^k_{*.*})_*$  has a torsion
$\Z[U]$--module structure.
\end{lemma}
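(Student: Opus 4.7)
\emph{Step 1 (Reduction to individual classes).}
For every fixed $n$, the set $\{l\in L:\chi_h(l)\leq n\}$ is finite, because $\chi_h$ is a proper, bounded--below function (the intersection form on $L$ being negative definite). Hence $S_n$ is a finite cubical subcomplex of $\R^{|\cV|}$, each $S_n\cap\frX_{-d}$ is a finite subcomplex, and the filtration $\{S_n\cap\frX_{-d}\}_{d\geq 0}$ is eventually empty in $d$. Consequently $(E^1_{-d,q})_n$ is a finitely generated abelian group that vanishes outside a finite set of pairs $(d,q)$. It thus suffices to find, for each class $\alpha$ in each of these finitely many groups, an integer $\delta_\alpha\geq 1$ with $U^{\delta_\alpha}\alpha=0$ in $(E^1_{-d,q})_{n+\delta_\alpha}$; then $\delta(n):=\max_\alpha\delta_\alpha$ does the job.

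\emph{Step 2 (Direct limit argument).}
Since every cube of $\R^{|\cV|}$ has finite weight, $\bigcup_n(S_n\cap\frX_{-d})=\frX_{-d}$ for every $d$. Singular homology commutes with directed colimits, so
\[
\mathrm{colim}_n\,H_{-d+q}(S_n\cap\frX_{-d},\,S_n\cap\frX_{-d-1};\Z)\;=\;H_{-d+q}(\frX_{-d},\frX_{-d-1};\Z).
\]
If the right-hand side vanishes, then the image of any $\alpha$ in the colimit is zero, and by directedness there is a finite $\delta_\alpha$ with $U^{\delta_\alpha}\alpha=0$ at stage $n+\delta_\alpha$.

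\emph{Step 3 (Vanishing of the limit pair).}
This is the main geometric point: each $\frX_{-d}$ is contractible. Indeed, $\frX_{-d}$ is a union of translates $l+(\R_{\geq 0})^{|\cV|}$ based at $l\in L$ with $(s,l)\leq -d$; since $s$ lies in the Lipman cone, sliding in a direction dual to $s$ pushes any point of $\frX_{-d}$ deeper into $\frX_{-d}$, producing an explicit deformation retract onto a single dominating corner. Contractibility of both $\frX_{-d}$ and $\frX_{-d-1}$, combined with the long exact sequence of the pair, forces $H_p(\frX_{-d},\frX_{-d-1};\Z)=0$ for all $p$. I expect this step---arranging the retractions compatibly with the cubical structure and confirming the $\pi_0$--statement---to require the most care; an alternative is a direct chain-level argument producing, for any cycle representing $\alpha$, an explicit bounding chain that lives in $S_{n+\delta}\cap\frX_{-d}$ for some $\delta$.

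\emph{Step 4 (Propagation to higher pages).}
The $U$--action is induced by the inclusion of filtered spaces $S_n\hookrightarrow S_{n+1}$, hence is a morphism of spectral sequences commuting with every differential $d^k$. Since $E^{k+1}$ is a subquotient $\ker d^k/\mathrm{im}\,d^k$ of $E^k$, a routine induction on $k$ shows that once $U^{\delta(n)}$ kills $(E^1_{*,*})_n$ it also kills each $(E^k_{*,*})_n$. Combined with Step 1 this yields the torsion $\Z[U]$--module structure asserted for every $k\geq 1$.
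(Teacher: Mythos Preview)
Your argument is correct, but the paper uses a shorter and more explicit trick. Instead of passing to the colimit and invoking contractibility of $\frX_{-d}$, the paper picks a single $E_i$ with $i\in{\rm Supp}(s)$ and chooses $\delta(n)$ so that $\chi_h(l+E_i)\leq n+\delta(n)$ for every lattice point $l\in S_n$ (possible by compactness). The translation $l\mapsto l+E_i$ then sends $(S_n\cap\frX_{-d},\,S_n\cap\frX_{-d-1})$ into $(S_{n+\delta(n)}\cap\frX_{-d-1},\,S_{n+\delta(n)}\cap\frX_{-d-2})$, and composing with the inclusion $j$ of this pair into $(S_{n+\delta(n)}\cap\frX_{-d},\,S_{n+\delta(n)}\cap\frX_{-d-1})$ realizes $U^{\delta(n)}$; but $j_*=0$ because the total space of the source pair equals the subspace of the target pair.

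Both approaches work. The paper's shift argument is more constructive (it hands you an explicit $\delta(n)$ from the outset, uniform in $(d,q)$) and avoids the colimit/contractibility discussion entirely. Your route is perfectly valid and perhaps more conceptual, though your Step~3 as written is slightly imprecise: there is no ``single dominating corner'' since $\frX_{-d}$ is unbounded. The clean statement is that for any lattice point $l_0\in\frX_{-d}$, the map $x\mapsto\max(x,l_0)$ (coordinate-wise) deformation retracts $\frX_{-d}$ onto the contractible orthant $l_0+(\R_{\geq 0})^{|\cV|}$; the straight-line homotopy stays in $\frX_{-d}$ because $\frX_{-d}$ is upward-closed.
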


\bekezdes In the body of the paper we  provide many examples and several  concrete computations.
In the presentation  we focus on the following facts:

$\bullet$ \ non-stability with respect to a base point blow up,
stability with respect to any other blow up;

$\bullet$ \ the proof and applications  of the `Filtered Reduction Theorem',
when the rank of the lattice can be
decreased according to the number of `bad vertices';

$\bullet$ \ computation of the degeneration index $k$;

$\bullet$ \  exemplification of the $U$--action at the level of $E^1$ or $E^\infty$;

$\bullet$ \ the structure of the page $E^1$ and $E^\infty$ and of their
 Poincar\'e series;

$\bullet$ \ connection  with the `motivic' Poincar\'e series of $M$.

 \bekezdes Regarding motivation ({\bf I}) we extract here the following sample statement:

  \begin{proposition}\label{prop:infty_int} \ 

\noindent (a) $PE_\infty(1, Q,\h)=\sum_{n\geq m_w}\, \big(\, \sum_b \, {\rm rank}\, H_b(S_n,\Z)\,\h^b\, \big)\cdot Q^n$. Hence,
 $PE_\infty(1, Q,-1)=\sum_{n\geq m_w}\, \chi_{top}(S_n)\cdot Q^n$ \ (where $\chi_{top}$ denoted the topological Euler characteristic).

\noindent (b) Let $R$ be any  rectangle of type $R(0,c')=\{0\leq x\leq c'\}$
with $c'\geq \lfloor Z_K \rfloor$. Then
\begin{equation}\label{eq:1}
PE_\infty(1, Q,-1)=\frac{1}{1-Q}\cdot \sum_{\square_q\subset R}\, (-1)^q \, Q^{w_h(\square_q)}.
\end{equation}

\noindent (c) $Pol_{SW}(Q):= PE_\infty (1,Q, -1)-\frac{1}{1-Q} $ is a polynomial in $Q$, whose value at
$Q=1$ is
$eu(\bH_*(\frX,w_h))$, the normalized Seiberg-Witten invariant of $M$ associated with the spin$^c$ structure $h*\sigma_{can}$.

Hence, $PE_\infty(T,Q,\h)$ is a high multigraded generalization of the Seiberg-Witten invariant.
\end{proposition}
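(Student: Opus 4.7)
For part (a), I would collapse the filtration index by specializing $T=1$. The point is that by the convergence of the spectral sequence (and by the fact, recalled in the excerpt, that for each fixed $n$ the filtration $\{S_n\cap \frX_{-d}\}_{d\geq 0}$ is finite), summing $\mathrm{rank}(E^\infty_{-d,q})_n$ over those $d$ with $-d+q=q'$ recovers $\mathrm{rank}(\bH_{q'})_{-2n}=\mathrm{rank}\,H_{q'}(S_n,\Z)$. This immediately identifies $PE_\infty(1,Q,\h)$ with $\sum_{n,q'}\mathrm{rank}\,H_{q'}(S_n,\Z)\,\h^{q'}Q^n$, and specializing $\h=-1$ gives the $\chi_{top}(S_n)$ form stated in (a).

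For part (b), the key geometric input I would invoke is a deformation retract $S_n\simeq S_n\cap R$ whenever $R=R(0,c')$ with $c'\geq \lfloor Z_K\rfloor$. This is the classical retraction phenomenon underlying the proof that $\bH_*$ is a 3-manifold invariant (cf.\ \cite{Nlattice,Nkonyv}): past the threshold $\lfloor Z_K\rfloor$ the Riemann--Roch weight $\chi_h(l)=-\tfrac{1}{2}(l,l+k_h)$ is strictly increasing in each coordinate direction, so cubes of weight $\leq n$ lying outside $R$ can be retracted radially back into $R$ without changing the homotopy type of $S_n$. Granting this retract, $\chi_{top}(S_n)=\chi_{top}(S_n\cap R)=\sum_{\square_q\subset R,\ w_h(\square_q)\leq n}(-1)^q$, and formula (\ref{eq:1}) then follows by exchanging the order of summation in $\sum_n \chi_{top}(S_n)Q^n$ using $\sum_{n\geq w}Q^n=Q^{w}/(1-Q)$.

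Part (c) reduces to combinatorial rewriting plus a known identification. Polynomiality follows from (b) because $\chi_{top}(R)=1$ (the rectangle is contractible), so the numerator $\sum_{\square_q\subset R}(-1)^q Q^{w_h(\square_q)}-1$ vanishes at $Q=1$ and is divisible by $1-Q$. Evaluating the quotient at $Q=1$ via the expansion $(1-Q^w)/(1-Q)=1+Q+\cdots+Q^{w-1}$ yields
\[
Pol_{SW}(1)=-\sum_{\square_q\subset R}(-1)^q w_h(\square_q).
\]
Writing $w_h(\square_q)=m_w+(w_h(\square_q)-m_w)$ and converting the non-negative remainder into a layered sum over the level sets $\{w_h\geq k\}$ rewrites the right-hand side as $-m_w+\sum_{n\geq m_w}(\chi_{top}(S_n\cap R)-1)$, which by the retract of (b) equals $-m_w+\sum_{n\geq m_w}(\chi_{top}(S_n)-1)$. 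Matching this against the Euler-characteristic formula $eu(\bH_*)=-m_w+\sum_q(-1)^q\,\mathrm{rank}(\bH_{q,red})$ via $\bH_{q,red}=\bigoplus_n\widetilde{H}_q(S_n,\Z)$ shows $Pol_{SW}(1)=eu(\bH_*)$; the final identification with the normalized Seiberg--Witten invariant is then the main result of \cite{Nlattice,Nkonyv}.

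The main obstacle is the retract step behind (b): its proof is combinatorial but needs both the sharp rectangle choice $c'\geq \lfloor Z_K\rfloor$ and the convexity/monotonicity of $\chi_h$ past that threshold. Once this retract is granted, (a) is essentially formal and (c) is bookkeeping built on the pre-existing identification of $eu(\bH_*)$ with the normalized Seiberg--Witten invariant.
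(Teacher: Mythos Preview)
Your proof is correct and for parts (a) and (b) follows exactly the paper's argument (the paper's Proposition \ref{prop:infty} in the body): part (a) is the spectral sequence convergence identity (\ref{eq:pqr}), and part (b) is the homotopy equivalence $S_n\cap R\hookrightarrow S_n$ of Proposition \ref{lem:con} combined with the cube-counting formula for $\chi_{top}(S_n\cap R)$.

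For part (c) your route differs slightly from the paper's in two places, and in both cases your argument is the more self-contained one. First, for polynomiality the paper invokes Corollary \ref{cor:fin}, which is a consequence of the structure Theorem \ref{th:peinfty}; you instead observe directly that $\sum_{\square_q\subset R}(-1)^q Q^{w_h(\square_q)}$ specializes to $\chi_{top}(R)=1$ at $Q=1$, so the numerator of $Pol_{SW}(Q)$ is divisible by $1-Q$. This is shorter and avoids the heavier periodicity machinery. Second, for the evaluation $Pol_{SW}(1)=eu(\bH_*)$ the paper simply cites the identity $\sum_{\square_q\subset R}(-1)^{q+1}w_h(\square_q)=eu(\bH_*)$ from (\ref{eq:eu}) (proved in \cite{JEMS}), whereas you essentially re-derive that identity via the layered-sum rewriting $-m_w+\sum_{n\geq m_w}(\chi_{top}(S_n)-1)$. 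Both approaches land on the same formula; yours makes the proof of (c) independent of the external reference, at the cost of a few extra lines.
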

In fact,  $ PE_\infty(1,Q,-1)=PE_1(1,Q,-1)$ and part {\it (b)} generalizes as
\begin{equation}\label{eq:2}
PE_1(T,Q,-1)=
\frac{1}{1-Q}\cdot\sum_{l\in L_{\geq 0}}\, T^{-(s,l)}\sum_{I\subset \cV} (-1)^{|I|}Q^{w_h((l,I))}.
\end{equation}
Here $\square =(l,I)$ is a $|I|$--cube with vertices $\{l+\sum_{j\in J}E_j\}_{J\subset I}$.

Note that in  (\ref{eq:2})
the summation is over cubes, and $\square\mapsto \max\{w_h(v)\,:\, v \ \mbox{is a vertex of $\square$}\}$ a priori is a complicated irregular arithmetical function. Still, in Theorem \ref{th:form_int}, by proving a certain regularity behaviour of this function, we replace the cube--summation by a summation over lattice points (with Jacobi theta series type summands).

 \bekezdes Regarding motivation ({\bf III}) we have the following two structure results:

\begin{theorem}\label{th:peinfty_int} Assume that ${\rm gcd}\{n_i\}_i=1$ (for the general statement see
Theorem \ref{th:peinfty}).
Let $N\in \Z_{>0}$ be the smallest integer such that $\tilde{s}:=Ns\in L$ and
set $p:=-(s,\tilde{s})$.
 Then there exist lattice points $\{l_q\}_{q=0}^{p-1}$ such that
$PE_\infty(T,Q,\h)$ --- up to finitely many terms ---  has the form
\begin{equation*}
\frac{T^{d_0}}{1-Q}\, \cdot  \Big[\, Q^{n_0}+(T-1) \cdot \sum_{q=0}^{p-1}\ T^{q}Q^{\chi_h(l_r)}\,
\sum_{m\geq 0}\  T^{mp}Q^{-(m\tilde{s}, m\tilde{s}+k_h+2l_q)/2}\,\Big].
\end{equation*}
\end{theorem}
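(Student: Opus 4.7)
The plan is to reorganise $PE_\infty(T,Q,\h)$ by exploiting three facts: the asymptotic contractibility of $S_n$; the surjectivity of $(s,\cdot):L\to\Z$ granted by $\gcd\{n_i\}=1$; and the Riemann--Roch identity satisfied by $\chi_h$ under translation by $\tilde s$. First I would show that for $n$ large enough, $S_n$ and each nonempty $S_n\cap\frX_{-d}$ are contractible. This uses the coercivity of $\chi_h$ on the Lipman cone together with the fact that $\tilde s$ sits in the interior of that cone, so the sublevel sets $\{\chi_h\le n\}$ are asymptotically convex modulo the cubical structure already exploited in the filtered reduction theorem. Consequently, $(E^\infty_{-d,q})_n$ vanishes for $q\geq 1$ once $n\geq n_0$, and the contributions from $n<n_0$ are a Laurent polynomial in $T,Q,\h$, absorbed into the ``up to finitely many terms'' error in the statement.

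On the tail $n\ge n_0$ only the row $q=0$ survives. Since $S_n$ is path-connected, $\mathrm{Gr}^F_{-d}H_0(S_n,\Z)$ equals $\Z$ precisely at the single level $d=d_{\max}(n):=\max\{-(s,l):l\in L_{\geq 0},\,\chi_h(l)\leq n\}$ and is $0$ otherwise. A standard Abel summation then rewrites the surviving double sum as
\[
\frac{Q^{n_0}}{1-Q}+\frac{T-1}{1-Q}\sum_{l\in L^{\mathrm{ext}}}T^{-(s,l)}Q^{\chi_h(l)},
\]
where $L^{\mathrm{ext}}\subset L_{\geq 0}$ is the set of extremal lattice points at which $d_{\max}$ jumps. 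I then partition $L^{\mathrm{ext}}$ by residue modulo $p=-(s,\tilde s)$. Since $\gcd\{n_i\}=1$, we have $(s,E_j)=\pm n_j$, hence $(s,L)=\Z$ and every residue class mod $p$ is realised. For each $q\in\{0,\ldots,p-1\}$ I choose a canonical minimal representative $l_q$ with $-(s,l_q)\equiv q\pmod p$; all but finitely many elements of the $q$-th class of $L^{\mathrm{ext}}$ are then of the form $l_q+m\tilde s$ with $m\ge 0$, the remaining ones being swept into the finite exceptional error.

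The final step is the direct expansion
\[
\chi_h(l_q+m\tilde s)=\chi_h(l_q)-\tfrac12(m\tilde s,\,m\tilde s+k_h+2l_q),
\]
which follows from $\chi_h(l)=-\tfrac12(l,l+k_h)$ by bilinearity. Combined with $-(s,l_q+m\tilde s)=-(s,l_q)+mp$, and after absorbing a uniform shift $-(s,l_q)-q$ into the global prefactor $T^{d_0}$, the tail becomes
\[
(T-1)\sum_{q=0}^{p-1}T^{q}Q^{\chi_h(l_q)}\sum_{m\geq 0}T^{mp}Q^{-(m\tilde s,\,m\tilde s+k_h+2l_q)/2}.
\]
Adding back the base contribution $Q^{n_0}$ and the prefactor $T^{d_0}/(1-Q)$ produces the claimed closed form.

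The main obstacle will be the description and control of $L^{\mathrm{ext}}$: one must prove that, outside a compact exceptional region, the extremal frontier consists precisely of the $p$ rays $\{l_q+m\tilde s\}_{m\geq 0}$. This amounts to a convexity/monotonicity statement for $\chi_h$ along translates of $\tilde s$ within each residue class -- namely that, because $\tilde s$ lies in the interior of the Lipman cone, the quadratic $m\mapsto\chi_h(l_q+m\tilde s)$ is strictly increasing for $m$ large and its growth dominates transverse fluctuations inside the class. Establishing this rigorously, together with a coherent choice of the $l_q$ ensuring that every extremal point is either of the stated form or in the finite exceptional set, is the delicate combinatorial-geometric part of the argument; everything else reduces to bookkeeping inside the quadratic form $(\,,\,)$.
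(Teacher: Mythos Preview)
Your strategy matches the paper's: discard finitely many terms where $S_n$ may fail to be contractible, note that the surviving $E^\infty$ entries lie in homological degree $0$, and identify the unique nonzero graded piece of $H_0(S_n)=\Z$ by when $S_n\cap\frX_{-d}\neq\emptyset$ while $S_n\cap\frX_{-d-1}=\emptyset$. One correction: you do not need, and should not claim, that each nonempty $S_n\cap\frX_{-d}$ is contractible; this is generally false (these pieces can be disconnected even for large $n$). Only contractibility of $S_n$ itself is used, so that $H_{>0}(S_n)=0$ and the filtration on $H_0(S_n)=\Z$ is governed by nonemptiness of the filtered pieces.

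The substantive gap is exactly the one you name, and your ``growth dominates transverse fluctuations'' heuristic does not close it. The paper works with the dual quantity $\min(d):=\min\{\chi_h(l):l\in\frX_{-d}\}$ and proves an \emph{exact} (not asymptotic) stabilisation lemma: if $l_{q,k}$ realises $\min(kp+q)$ with minimal overshoot $\delta(q,k):=-(s,l_{q,k})-(kp+q)\geq 0$, then $k\mapsto\delta(q,k)$ is non-increasing. The argument is a two-line bilinear identity. For $k_2\geq k_1$, set $\bar l_{q,k_2}:=l_{q,k_1}+(k_2-k_1)\tilde s$; any competitor $\bar l\in\frX_{-(k_2p+q)-\delta(q,k_1)}$ can be written $\bar l=l+(k_2-k_1)\tilde s$ with $l\in\frX_{-(k_1p+q)-\delta(q,k_1)}$, and then
\[
\chi_h(\bar l)-\chi_h(\bar l_{q,k_2})
=\big[\chi_h(l-l_{q,k_1})-(l-l_{q,k_1},\,l_{q,k_1})\big]
+(k_2-k_1)N\cdot\big(-(s,\,l-l_{q,k_1})\big).
\]
The bracket is $\geq 0$ because $l_{q,k_1}$ was a minimiser at level $k_1$ (rewrite $\chi_h(l)\geq\chi_h(l_{q,k_1})$ via $\chi_h(a+b)=\chi_h(a)+\chi_h(b)-(a,b)$), and the second term is $\geq 0$ since $-(s,l-l_{q,k_1})\geq 0$. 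Hence $\bar l_{q,k_2}$ is again a minimiser and $\delta(q,k_2)\leq\delta(q,k_1)$. Since $\delta\geq 0$ it stabilises; choosing $k_0$ past the stabilisation point yields $l_q$ with $l_q+(k-k_0)\tilde s$ realising $\min(kp+q)$ for all $k\geq k_0$. This is precisely your ``extremal frontier consists of $p$ rays'' claim, but obtained by a direct inequality in the quadratic form rather than by a vague domination estimate. (Under $\gcd\{n_i\}=1$ one can moreover show $\delta\equiv 0$ eventually, using connectedness of $S_n$; this is what makes your absorption of $-(s,l_q)-q$ into a uniform $T^{d_0}$ legitimate.)
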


For the $E^1$ pages it is more convenient to pack all the series $\{PE_{h,1}\}_{h\in H}$ in a single
series
belonging to  $\Z[[T^{1/|H|}, Q^{1/|H|}]]\,[Q^{-1}]\,[H]$.
\begin{theorem}\label{th:form_int}
There exist a finite index set ${\cP}$, integers $\{a_\cP\}_\cP$, $\{b_\cP\}_\cP$, sublattices $\Z^{s_\cP}$ of $L'$ and elements
$\{k_\cP\}_\cP,\ \{r_\cP\}_\cP\in L'$ such that
\begin{equation*}\begin{split}
\sum_{h\in H}\  T^{-(s,s_h)} Q^{\chi(s_h)}\cdot
& PE_{h,1}(T,Q,-1)[h]=\\
&\sum_\cP \ \sum_I \ (-1)^{|I|}\ T^{a_\cP}Q^{b_\cP}\cdot \
 \sum _{A\in (\Z_{\geq 0})^{s_\cP}}\, T^{-(s,A)}\, Q^{-(A,A+k_\cP)/2}\ [r_\cP+A].
\end{split}\end{equation*}
\end{theorem}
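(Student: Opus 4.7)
The starting point is formula (\ref{eq:2}), which expresses $PE_{h,1}(T,Q,-1)$ as a weighted sum over cubes $(l,I)$ with $l\in L_{\geq 0}$ and $I\subset \cV$. The strategy is to assemble the sum over $h\in H$ into a single lattice-point sum in $L'$, then partition this sum according to the combinatorial type of the cube-weight maximiser, yielding the claimed quadratic sums over non-negative orthants.

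\textbf{Step 1 (Assembly over $h$).} Substitute $l'=s_h+l\in L'$. Using the identity $\chi_h(m)=\chi(m)-(m,s_h)$ one obtains
\[
\chi(s_h)+\chi_h(l+v_{J})=\chi(l'+v_{J}),\qquad (s,s_h)+(s,l)=(s,l'),
\]
for any $J\subset\cV$, where $v_J:=\sum_{j\in J}E_j$. Hence the outer weighted sum collapses to a single sum over $l'\in L'$ (lying above $s_{[l']}$), with the class $[l']\in H$ furnishing the $[\cdot]$-label, and the total $Q$-exponent being a single quadratic $\chi(l'+v_{J})$ in $l'$.

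\textbf{Step 2 (Chamber decomposition).} For each $I\subset \cV$ analyse $w_h((l,I))=\max_{J\subset I}\chi_h(l+v_J)$. The maximiser $J^*(l,I)$ is determined by the signs of finitely many affine forms of the shape $\chi_h(l+v_{J_1})-\chi_h(l+v_{J_2})$, which are linear in $l$. Negative-definiteness of the intersection form shows that all the relevant signs stabilise deep in the Lipman cone (in fact $J^*(l,I)=I$ eventually), and yields a decomposition of $L\cap\calS$ into finitely many chambers of the shape $l'_\cP+(\Z_{\geq 0})^{s_\cP}$, on each of which $(I,J^*)$ is constant and $w_h((l,I))=\chi_h(l+v_{J^*})$ is a single quadratic in $l$.

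\textbf{Step 3 (Reparametrisation).} On each chamber write $l'=l'_\cP+A$ with $A\in(\Z_{\geq 0})^{s_\cP}$. The expansion
\[
\chi(l'_\cP+A+v_{J^*})=\chi(l'_\cP+v_{J^*})-\tfrac12(A,A+k_\cP),\qquad k_\cP:=2(l'_\cP+v_{J^*})-Z_K,
\]
combined with $T^{-(s,l'_\cP+A)}=T^{-(s,l'_\cP)}T^{-(s,A)}$, produces exactly the summands of the theorem with $a_\cP:=-(s,l'_\cP)$, $b_\cP:=\chi(l'_\cP+v_{J^*})$, and $r_\cP:=l'_\cP$ (so that $[r_\cP+A]=[l']\in H$). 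The prefactor $\tfrac{1}{1-Q}$ in (\ref{eq:2}) is absorbed through the telescoping induced by the alternating sum $\sum_I(-1)^{|I|}$, and finitely many low-degree or boundary lattice points are absorbed by enlarging $\cP$ with trivial ($s_\cP=0$) chambers.

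\textbf{Main obstacle.} The technical core is Step~2: proving that the chamber decomposition is finite and that $J^*(l,I)$ genuinely stabilises on each chamber (rather than oscillating arbitrarily along the free directions). This relies essentially on negative-definiteness of the intersection form, which forces the governing linear forms to be coercive with definite sign once $l$ is far enough in $\calS$. A parallel combinatorial issue is to track the alternating signs $(-1)^{|I|}$ across adjacent chambers and boundary hyperplanes so that the contributions of the various $(I,J^*)$-pairs assemble into the claimed structural quadratic sums, without producing ill-defined infinite tails.
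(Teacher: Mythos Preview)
Your Steps 1 and 3 match the paper's approach: the assembly over $h$ via $l'=s_h+l$ yields exactly equation (\ref{eq:uni}), and the quadratic expansion of $\chi$ on each chamber is the same computation the paper performs at the end. The substance lies entirely in Step 2, and there your sketch has a genuine gap.

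You correctly observe that the maximiser $J^*(l',I)$ is governed by signs of finitely many affine forms in $l'$, and that deep in $\calS'$ one may take $J^*=I$. But the assertion that this ``yields a decomposition into finitely many chambers of the shape $l'_\cP+(\Z_{\geq 0})^{s_\cP}$'' does not follow: polyhedral chambers cut out by affine hyperplanes are generically not shifted coordinate orthants in any given basis, so you cannot simply invoke negative-definiteness and stop. You must actually exhibit such a partition.

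The paper's decisive move is to parametrise $\calS'$ by the basis $\{E_i^*\}$, so that $\calS'\simeq(\Z_{\geq 0})^s$ with coordinates $a_i$, and to fix an integer $\Delta\geq\max_i(\kappa_i-1)$ (with $\kappa_i$ the valency of vertex $i$). For each $K\subset\cV$ and each tuple $0\leq a_j^K<\Delta$ ($j\notin K$), set $\cP(K,\{a_j^K\})=\{a:a_i\geq\Delta\text{ for }i\in K,\ a_j=a_j^K\text{ for }j\notin K\}$. This is a finite partition of $(\Z_{\geq 0})^s$ into shifted orthants of dimension $|K|$. The key Lemma \ref{lem:tech} then shows that on each piece, for every $I$, the maximiser $J$ can be chosen uniformly with $J\supset K\cap I$. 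The proof uses the graph-combinatorial bound $\chi(E_{J\cup\{i\}})-\chi(E_J)\geq -(\kappa_i-1)$ together with $(r,E_i)\geq\Delta$ for $i\in K$ to force $K\cap I$ into $J$; once this is done, the free $A$-directions pair trivially with $E_{J'}$ for $J'\cap K=\emptyset$, making the comparison $A$-independent. This is where the explicit threshold $\Delta$ tied to the valencies is essential --- it is not an abstract consequence of definiteness.

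Finally, your claim that the prefactor $\tfrac{1}{1-Q}$ is ``absorbed through telescoping induced by $\sum_I(-1)^{|I|}$'' is not correct; no such cancellation occurs. The paper works directly with the cube-sum expression (\ref{eq:SUM}), which is $(1-Q)$ times the left-hand side, and the $\tfrac{1}{1-Q}$ factor is simply carried along (or suppressed) throughout.
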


\section{Preliminaries. The general setup}

\subsection{Resolutions of surface singularities and resolution graphs}\label{ss:res}
\bekezdes
Let $(X,o)$ be a complex analytic
normal surface singularity whose link $M$ is a rational homology sphere.
Recall that a resolution of $(X,o)$ is a proper analytic map $\phi:\widetilde{X}\to X$,
where  $\widetilde{X}$ is smooth and  $\phi \big|_{\phi^{-1}\left( X \setminus o \right)}: \phi^{-1}\left( X \setminus o \right) \rightarrow  X \setminus o$ is an isomorphism. For any resolution $\phi$ let $\cup_{i\in\calv}E_i$ be
the irreducible decomposition of the {\it exceptional curve} $E:=\phi^{-1}(o)$.

We order the resolutions as follows: we say that $\phi_1:\widetilde{X}_1\to X$ dominates
$\phi_2:\widetilde{X}_2\to X$, if there is a regular map $\psi:\widetilde{X}_1\to \widetilde{X}_2$ such that $\phi_2\circ \psi=\phi_1$. In such cases $\psi$ is an iterated blow up of infinitely near points of
$\phi_2^{-1}(o)$. For any $(X,o)$ there exists a unique minimal resolution, that is, any resolution dominates this minimal one.

In our  topological discussions we will use the resolution graph $\Gamma_\phi$ associated with a certain  resolution $\phi$. Doing this usually one assumes that  $\phi$ is a `good' resolution, that is,
each $E_v$ is smooth and $E$ is a normal crossing divisor. In any such situation we define the dual graph, and we order such graph similarly: $\Gamma_1$ dominates $\Gamma_2$ if $\Gamma_1$ is obtained from $\Gamma_2$ by a sequence of
blow ups of graphs. Clearly, if we have two resolutions, then $\phi_1$ dominates $\phi_2$ if and only if
$\Gamma_{\phi_1}$ dominates $\Gamma_{\phi_2}$. Note also that there is a unique minimal good resolution, hence a  unique  minimal resolution graph.
Recall also that the resolution graphs serve as plumbing graphs for the link $M$.
Since  $M$  is a rational homology sphere, necessarily
each $E_v$ is rational and the dual graph  is a tree.

\bekezdes\label{bek:invs}
Once a good resolution is fixed we will use the following notations. All of them depend merely on the graph $\Gamma_\phi$.

The lattice $L:=H_2(\widetilde{X},\mathbb{Z})$ is  endowed
with the natural  negative definite intersection form  $(\,,\,)$. It is a
free $\Z$--module generated by the classes of  $\{E_i\}_{i\in\mathcal{V}}$.
 The dual lattice is $L'={\rm Hom}_\Z(L,\Z) \simeq\{
l'\in L\otimes \Q\,:\, (l',L)\in\Z\}\subset L\otimes \Q$.
It  is generated
by the (anti)dual classes $\{E^*_i\}_{i\in\mathcal{V}}$ defined
by $(E^{*}_{i},E_j)=-\delta_{ij}$ (where $\delta_{ij}$ stays for the  Kronecker symbol).
$L'$ is also  identified with $H^2(\tX,\Z)$. 
$L$ is embedded in $L'$ with
 $ H:=L'/L\simeq H_1(M,\mathbb{Z})$. We denote the class  of $l'\in L'$ in $L'/L$  by $[l']$.

There is a natural partial ordering of $L'$ and $L$: we write $l_1'\geq l_2'$ if
$l_1'-l_2'=\sum _i c_iE_i$ with every  $c_i\geq 0$.
We set $L_{\geq 0}=\{l\in L\,:\, l\geq 0\}$ and
$L_{>0}=L_{\geq 0}\setminus \{0\}$.

We define the {\it Lipman cone} as $\calS':=\{l'\in L'\,:\, (l', E_i)\leq 0 \ \mbox{for all $i$}\}$, and we  also
set $\calS:=\calS'\cap L$.
As a monoid $\calS'$  is generated over $\bZ_{\geq 0}$ by $\{E^*_i\}_i$.
 If $s'\in\calS'\setminus \{0\}$, then
all its $E_i$--coordinates  are strictly positive. Thus, if $s \in \cS \setminus \{ 0 \}$, then $s \geq E=\sum_iE_i$.

For any $h\in L'/L\simeq H_1(M,\mathbb{Z})$ there exists a unique minimal element $s_h\in \calS'$ (with respect to the ordering $\leq $) such that $[s_h]=h$ \cite{NOSz}.

The {\it anticanonical cycle} $Z_K\in L'$ is defined by the
{\it adjunction formulae}
$(Z_K, E_i)=(E_i,E_i)+2$ for all $i\in\mathcal{V}$. The set of {\it characteristic elements} are
${\rm Char}:=\{k\in L': \ (x,x+k)\in 2\Z\ \mbox{for all $x\in L$}\}$. Clearly, $-Z_K\in{\rm Char}$. In fact,
${\rm Char}=-Z_K+2L'$. For any $h\in H$ we fix a {\it distinguished characteristic element }  defined as
$k_h:=-Z_k+2s_h\in {\rm Char}$.

The set ${\rm Spin}^c(M)$ of spin$^c$ structures of $M$ is an $H=H_1(M,\Z)$ torsor. If $\sigma_{can}$ denotes the
canonical spin$^c$ structure of $M$, then the correspondence  $h\mapsto h*\sigma_{can}$ identifies $H$ with ${\rm Spin}^c(M)$,
cf. \cite{NOSz,Nkonyv}.

In \ref{ss:3.1} we will also introduce the Riemann--Roch
weight functions and  the multivariable topological Poincar\'e (zeta)  function together with  its `motivic' version.

\subsection{Embedded curve singularities in $(X,o)$}

Next, we fix  a reduced Weil divisor $(C,o)$ in $(X,o)$. We can also think about it  as
an isolated  curve singularity in $(X,o)$.
Then we can consider the set of good embedded resolutions of the pair $(C,o)\subset (X,o)$, that is, we require that the resolution is good, and that the union of  $E$ with the strict transform $\tilde{C}$ of $(C,o)$ from a normal crossing divisor.
It is known that there is a unique  good minimal  embedded resolution, and any other one is obtained from this by blowing up. Similarly, we can consider the corresponding embedded resolution graphs with their dominance ordering.

Usually, the strict  transform will be denoted by arrows on the graph.
Their index set will be denoted by $\cA=\{1, \ldots , r\}$ corresponding to the
irreducible decomposition $\cup_{a=1}^rC_a$ of $(C,o)$.

The analytic pair $(C,o)\subset (X,o)$ provides a topological pair $L_C\subset M$, the embedded link
$L_C$  of $(C,o)$ into $M$. This embedded topological type can be read from any embedded resolution graph, and it determines the minimal good embedded resolution graph of the pair. However,
when we {\it blow up the graphs} we face an ambiguity regarding  the position of the arrows
on the graph.

Consider e.g. the following minimal good embedded resolution graph (the left graph below). It is realized by
$(X,o)=\{x^3=yz\}\subset (\C^3,o)$, and $(C,o)=\{x=y=0\}$.

\begin{picture}(320,50)(0,5)

\put(30,30){\circle*{4}} \put(60,30){\circle*{4}}
\put(60,30){\vector(-1,0){45}}
\put(5,30){\makebox(0,0){\small{$\tilde{C}$}}}
\put(30,40){\makebox(0,0){\small{$-2$}}}
\put(60,40){\makebox(0,0){\small{$-2$}}}

\put(130,30){\circle*{4}} \put(160,30){\circle*{4}} \put(190,30){\circle*{4}}
\put(190,30){\vector(-1,0){75}}
\put(130,40){\makebox(0,0){\small{$-1$}}}
\put(160,40){\makebox(0,0){\small{$-3$}}}
\put(190,40){\makebox(0,0){\small{$-2$}}}

\put(230,30){\circle*{4}} \put(260,30){\circle*{4}} \put(290,30){\circle*{4}}
\put(290,30){\line(-1,0){60}}\put(260,30){\vector(0,-1){15}}
\put(230,40){\makebox(0,0){\small{$-1$}}}
\put(260,40){\makebox(0,0){\small{$-3$}}}
\put(290,40){\makebox(0,0){\small{$-2$}}}

\end{picture}

The second and the third graphs are obtained from the minimal one by blow up.
In the first case we blow up the intersection point of $\tilde{C}$ with $E$, in the second case we blow up a generic point of that component which supports $\tilde{C}$. In the graph language: in the first case we blow up the edge of the arrow, in the second case
the vertex supporting the arrow.
Both non-minimal graphs represent topologically the very same pair  $(M,L_C)$, still,
from analytic point of view, or even at the level of rational (Mumford) divisors associated with  the strict transforms $C_a$, they are different. E.g., in the lattice $L'$ of the resolution
the cycles  associated with the arrows in the two cases are different (that is, if the arrow is supported by the vertex $i(a)$, then the rational cycles $E_{i(a)}^*\in L'$ in the two cases are different).
The filtrations what we will consider later will also
be unstable with respect to blow ups of edges supporting arrowheads.

Since in the definition of several invariants we  will use cycles from $L'$ associated with an
embedded resolution,
we wish to make distinction between the two cases from above, even though  the
 pair $(M,L_C)$ is the same.
For this reason we will introduce a decoration for the components of $(C,o)$.
Motivated by the analytic theory, in the first case we say that we blow up a `base point' (namely $\tilde{C}\cap E$), while in the second case a `non-base point'
(namely, a `generic' point of $E_{i(a)}$).

\subsection{Decorations of the components of $(C,o)$.}\label{ss:1.2}
 Let $\widetilde{X}_C$  be the {\it minimal good embedded resolution} of the pair
 $(C,o)\subset (X,o)$.
Let $C=C_1\cup\cdots \cup C_r$ be the irreducible decomposition of $C$, and we also fix
nonnegative integers $b_1, \ldots, b_r\in \Z_{\geq 0}$, the `lengths  of the base points'.
Then the resolution $\tX_{C,b}$ is obtained from  $\widetilde{X}_C$ by iterated blow ups of infinitely near points of the intersection of the strict transform $\widetilde{C}_a$ of $C_a$ (for any $a$)
with the exceptional curve:
for every $a\in\cA$ the strict transform  of
 $C_a$ is blown up $b_a$ times. We call  all these modifications are {\it base point blow ups}.
We say that the output is the
 {\it minimal good resolution
of the  decorated pair  $(X,C,b)$}.   Then, if we wish to represent this  decorated pair
$(X,C,b)$  by any other resolution, that resolution should be
obtained from $\tX_{C,b}$ by any sequence of blow ups, but all of them with free centers or
centers which are  intersection points of irreducible exceptional divisors (we call them
non-base points).

Similarly, taking the corresponding resolution graphs, we have
the minimal embedded graph  $\Gamma^{min}_{M,C}$
of the (non-decorated) pair $(M,L_C)$ (the graph of $\widetilde{X}_C$). Then  the minimal embedded graph  $\Gamma^{min}_{M,C,b}$
of the decorated pair $(M,L_C,b)$ is  obtained from $\Gamma^{min}_{M,C}$ by blowing up repeatedly
the edge of each $a$--arrow $b_a$ times.
Any other graph which represent   $(M,L_C,b)$ non--minimally, is obtained from $\Gamma^{min}_{M,C,b}$ by iterated  blowing ups
arbitrarily many vertices arbitrarily many times, or edges connecting two non--arrowhead vertices.

Equivalences  of resolutions and graphs correspond to blow ups free centers or intersection points of irreducible exceptional divisors.
For any analytic type $(X,C,o)$ and decoration $b=(b_1,\dots, b_r)$ we have an equivalence class of resolutions which represent
$(X,C,b)$. Similarly, for any $(M,L_C,b)$ we have    an equivalence class of graphs  which represent the {\it decorated} $L_C$ embedded in $M$.

By this notations, in the pictures above, the left graph is $\Gamma^{min}_{M,C,0}$,
the second one is  $\Gamma^{min}_{M,C,1}$ (and they are \underline{not} equivalent), while the far right graph represents nonminimally $(M,C,0)$, in particular, it is equivalent with the far left one.

For a similar situation when decorated curves appear (though with different meaning)
see \cite{dJvS}.

\begin{definition}\label{def:DIV}
 Let $\phi$ be a good embedded resolution of $(X,C,b)$. For each strict transform $\tilde{C}_a$  of $C_a$ ($a\in\cA$) define $i(a)\in\cV_{\phi}$ so that $E_{i(a)}\cap \tilde{C}_a\not=\emptyset$. Then the (Mumford) {\it divisor of\, $C$}, as an  element of the  lattice $L'_\phi$,  is defined as
$${\rm div}_{\phi}(C):= \sum_{a\in\cA}\, E^*_{i(a)}\in \calS'_\phi\setminus \{0\}\subset L'_\phi.$$
\end{definition}
\begin{lemma}\label{lem:div}
Assume that the resolution $\phi_1$ is obtained from $\phi_2$ by a single  blow up $\psi$.

\begin{picture}(320,50)(-50,5)

\put(100,40){\makebox(0,0){\small{$(\widetilde{X}_1,E_1)$}}}
\put(200,40){\makebox(0,0){\small{$(\widetilde{X}_2,E_2)$}}}
\put(120,40){\vector(1,0){60}}
\put(150,45){\makebox(0,0){\small{$\psi$}}}
\put(150,10){\makebox(0,0){\small{$(X,C)$}}}
\put(115,30){\vector(2,-1){25}}
\put(185,30){\vector(-2,-1){25}}
\put(115,20){\makebox(0,0){\small{$\phi_1$}}}
\put(185,20){\makebox(0,0){\small{$\phi_2$}}}

\end{picture}

(a)
If $\psi$ is a non-base point blow up
then $\psi^*({\rm div}_{\phi_2}(C))={\rm div}_{\phi_1}(C)$, where $\psi^*:L'_{\phi_2}\to
L'_{\phi_1}$ is the cohomological pullback (or the pull back of local rational Cartier divisors).

(b)
If $\psi$ is a blow up of  a base point (creating $E_{new}$) then
$\psi^*({\rm div}_{\phi_2}(C))+E_{new}={\rm div}_{\phi_1}(C)$.
\end{lemma}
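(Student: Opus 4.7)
The plan is to reduce both identities to a uniform computation of $\psi^*(E^*_{i(a)}) \in L'_{\phi_1}$ combined with bookkeeping of how the vertex $i(a)$ supporting the $a$--arrow changes in passing from $\phi_2$ to $\phi_1$. The main tool is the projection formula $(\psi^*\alpha,\, D)_{\tX_1} = (\alpha,\, \psi_* D)_{\tX_2}$, together with $\psi_*(E_{new}) = 0$ and $\psi_*(E_j^{\tX_1}) = E_j^{\tX_2}$ for the strict transform of every $E_j$, $j \in \cV_{\phi_2}$. By nondegeneracy of the intersection form on $L_{\phi_1}\otimes \bQ$, these intersection numbers determine $\psi^*(E^*_i) \in L'_{\phi_1}$ uniquely; a short calculation yields the uniform formula $\psi^*(E^*_i) = E^*_i$ (via the natural embedding $\cV_{\phi_2} \subset \cV_{\phi_1}$) for every $i \in \cV_{\phi_2}$, regardless of whether the center of $\psi$ is a base point.

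For case (a) the center of $\psi$ is disjoint from every strict transform $\widetilde{C}_a$, so in $\phi_1$ each arrow is still supported by the same vertex $i(a) \in \cV_{\phi_2} \subset \cV_{\phi_1}$. Summing the identity above over $a$ gives $\psi^*({\rm div}_{\phi_2}(C)) = \sum_a E^*_{i(a)} = {\rm div}_{\phi_1}(C)$. For case (b) let $a \in \cA$ be the unique index such that $\widetilde{C}_a$ passes through the base point (uniqueness comes from the normal crossing condition of a good embedded resolution). After the blow up, the strict transforms of $\widetilde{C}_a$ and of $E_{i(a)}$ both meet $E_{new}$ transversally at distinct points and no longer meet each other; hence the supporting vertex for the $a$--arrow becomes $new$, while $i(a')$ is unchanged for all $a' \neq a$. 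Combining with $\psi^*(E^*_{i(a)}) = E^*_{i(a)}$, the claim reduces to the cycle-level identity
\[
E^*_{new} - E^*_{i(a)} \;=\; E_{new} \quad \text{in } L'_{\phi_1}.
\]

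This identity is verified again by nondegeneracy: it suffices to check that both sides have the same intersection with each $E_k^{\tX_1}$, $k \in \cV_{\phi_1}$. The left side gives $-\delta_{k,new} + \delta_{k,i(a)}$, while $(E_{new}, E_k^{\tX_1})$ equals $-1$ for $k = new$ (since $E_{new}^2 = -1$), $+1$ for $k = i(a)$ (transverse intersection of $E_{new}$ with the strict transform of $E_{i(a)}$), and $0$ otherwise. The entire argument is intersection-number bookkeeping, and I do not anticipate any substantive obstacle; the only place where the geometry enters beyond the projection formula is the identification of which strict transforms pass through $E_{new}$ after a base point blow up, which is immediate from the local model of a one-point blow up at a smooth transverse intersection.
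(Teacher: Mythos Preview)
Your proof is correct and takes essentially the same approach as the paper: both reduce the statement to computing $\psi^*(E^*_{i(a),\phi_2})$ and tracking how the supporting vertex $i(a)$ changes under the blow up. The paper's proof is a one--line assertion of the identities $\psi^*(E^*_{i(a),\phi_2})=E^*_{i(a),\phi_1}$ (non--base point case) and $\psi^*(E^*_{i(a),\phi_2})+E_{new}=E^*_{i(a),\phi_1}$ (base point case), whereas you supply the verification via the projection formula and the intersection computation $E^*_{new}-E^*_{i(a)}=E_{new}$; this extra detail is welcome and the argument is sound.
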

\begin{proof} {\it (a)}
One verifies that the strict transform of $E_{i(a),\phi_2}$ is $E_{i(a),\phi_1}$
and $\psi^*(E^*_{i(a),\phi_2})=E^*_{i(a),\phi_1}$.
In case  {\it (b)}, if the center belongs to $E_{i(a)}$, then
$\psi^*(E^*_{i(a),\phi_2})+E_{new}=E^*_{i(a),\phi_1}$.
\end{proof}


\bekezdes \label{bek:semi}
Let us take a resolution $\phi$ which represents the equivalence class of a decorated pair $(M,L_C,b)$.  Then $s:={\rm div}_\phi(C)\in \calS'_\phi\setminus \{0\}$ identifies completely (combinatorially) the position of the strict transform $\tilde{C}$. Indeed,
if $(s, E_i)<0$ for some $i\in\cV$, then $E_i$ supports exactly $-(s, E_i)$ arrows of $\tilde{C}$.

We write ${\rm Supp}(s)$ for $\{i\in \cV\,:\, (s, E_i)<0\}$. Its cardinality will be denoted
by $r'$.
It is easy to see that $r'$ is stable in the equivalence class.
Note that $r'\leq r$, and $r'<r$ happens exactly when some $E_i$ supports more arrows.
Later we will introduce series whose variables will be indexed by ${\rm Supp}(s)$, which
in the cases mentioned above can be smaller than the number of components of $(C,o)$, the usual index set (e.g. in multivariable Alexander polynomials) of the variables associated with
  embedded links.
If we wish to eliminate irreducible exceptional components supporting  more arrows (hence to realize the equality $r'=r$), we only need to increase the decorations $b_a$ of the corresponding
arrows. However, doing this, we will pass to another equivalence class, and all the invariants
associated with the class of a decorated pair will be modified, see e.g. Example \ref{ex:p2}.

Let us introduce an equivalence relation $\sim$ of the pairs $(\Gamma_\phi,s)$,
where $\Gamma_\phi$ is a good embedded resolution graph  and $s\in \calS'_\phi\setminus \{0\}$. It is generated by the following relation.
We say that $(\Gamma_{\phi_1},s_1)\sim (\Gamma_{\phi_2},s_2) $  if and only if  $\Gamma_{\phi_1}$ dominates $\Gamma_{\phi_2}$ via $\psi$, and $\psi^*(s_2)=s_1$. Then, by Lemma \ref{lem:div},
the equivalence classes $(\Gamma_\phi,s)/\sim $ \ coincide with the
equivalence classes of embedded good resolution graphs (up to non-base point blow ups) of decorated pairs.

In fact, instead of a pair $(C,b)=(\cup_{a=1}^r C_a, (b_1,\ldots , b_r))$ one can take several
pairs, namely  $(C_{(j)},b_{(j)})_{j=1}^t$ ($t\geq 1$), where each $(C_{(j)}, b_{(j)})$ has the form
$(\cup_{a=1}^{r_j}C_{j,a}, (b_{j,1},\ldots , b_{j,r_j}))$,
and their equivalence
relations correspond to the non-base point blow ups.
In the language of semigroups this corresponds to fixing $(\Gamma_\phi, s_1, \ldots s_t)$,
where $\Gamma_\phi$ is a good embedded resolution graph, each $s_j\in \calS'_\phi\setminus \{0\}$,
and their equivalence relation is generated by the following step: $(\Gamma_{\phi_1},\{s_{1,j}\}_{j=1}^t )\sim (\Gamma_{\phi_2},\{s_{2,j}\}_{j=1}^t ) $  if and only if  $\Gamma_{\phi_1}$ dominates $\Gamma_{\phi_2}$ via $\psi$, and $\psi^*(s_{2,j})=s_{1,j}$.

\section{The definition of the lattice homology}\label{ss:latweight}

 \subsection{The lattice homology associated with  a system of weights} \cite{Nlattice}

\bekezdes
 We consider a free $\Z$--module, with a fixed basis
$\{E_i\}_{i\in\calv}$, denoted by $\Z^s$. It is also convenient to fix
a total ordering of the index set $\calv$, which in the
sequel will be denoted by $\{1,\ldots,s\}$.

The lattice homology construction associates
 a graded $\Z[U]$--module with the
pair $(\Z^s, \{E_i\}_i)$ and with a set of weights.
The construction follows closely the construction of the lattice cohomology developed in \cite{Nlattice} (for more see also \cite{NOSz,NGr,Nkonyv}). For details of the homological version see \cite{NemCurves}.
Both versions can be defined in two equivalent ways: either via a chain complex, or via the construction of certain finite
cubical subspaces spaces $\{S_n\}_n$ of $\R^s$.
Here we review in short the second construction.

\bekezdes\label{9zu1} {\bf $\Z[U]$--modules.}  We will modify the usual grading of the polynomial
ring $\Z[U]$ in such a way that the new degree of $U$ is $-2$.
Besides $\calt^-_0:=\Z[U]$, considered as a graded $\Z[U]$--module,  we will consider the modules
$\calt_0(n):=\Z[U]/(U^n)$ too with the induced grading.
 Hence, $\calt_0(n)$, as a $\Z$--module, is freely
generated by $1,U^1,\ldots,U^{n-1}$, and it has finite
$\Z$--rank $n$.

More generally, for any graded $\Z[U]$--module $P$ with
$d$--homogeneous elements $P_d$, and  for any  $k\in\Z$,   we
denote by $P[k]$ the same module graded in such a way
that $P[k]_{d+k}=P_{d}$. Then set $\calt^-_k:=\calt^-_0[k]$ and
$\calt_k(n):=\calt_0(n)[k]$. Hence, for $m\in \Z$,
$\calt_{-2m}^-=\Z\langle U^{m}, U^{m+1},\ldots\rangle$ as a $\Z$-module.

\bekezdes\label{9complex} {\bf The construction of $\bH_*$.}
$\Z^s\otimes \R$ has a natural cellular decomposition into cubes. The
set of zero-dimensional cubes is provided  by the lattice points
$\Z^s$. Any $l\in \Z^s$ and subset $I\subset \calv$ of
cardinality $q$  defines a $q$-dimensional cube  $\square_q=(l, I)\subset \R^s$, which has its
vertices in the lattice points $\{l+E_J\}_J$, where  $E_J:=\sum_{i\in J}E_i$ and
$J$ runs over all subsets of $I$.
The set of
$q$-dimensional cubes defined in this way is denoted by $\calQ_q$
($0\leq q\leq s$).
Next,
 we consider a set of compatible {\em weight
functions} $w=\{w_q\}_q$,
$w_q:\calQ_q\to \Z$  ($0\leq q\leq s$), where

(a) For any integer $k\in\Z$, the set $w_0^{-1}(\,(-\infty,k]\,)$
is finite;

(b) for any $\square_q\in \calQ_q$, $w_q(\square_q)=\max\{w_0(v),\ \mbox{where $v$ runs over the vertices of $\square_q$}\}$.
%
%

For a more general definition see e.g. \cite{Nlattice,Nkonyv}.
In the sequel  we might omit the index $q$ of $w_q$.

Finally,  for each $n\in \Z$ we
define $S_n=S_n(w)\subset \R^s$ as the union of all
the cubes $\square_q$ (of any dimension) with $w(\square_q)\leq
n$. Clearly, $S_n=\emptyset$, whenever $n<m_w$, where $m_w:=\min_{l\in L}\{w(l)\}$. For any  $q\geq 0$, set
$$\bH_q(\R^s,w):=\oplus_{n\geq m_w}\, H_q(S_n,\Z).$$
Then $\bH_q$ is $\Z$ (in fact, $2\Z$)--graded: the
$(-2n)$--homogeneous elements $(\bH_q)_{-2n}$ consist of  $H_q(S_n,\Z)$.
Also, $\bH_q$ is a $\Z[U]$--module; the $U$--action is the homological morphism
$H_q(S_{n},\Z)\to H_q(S_{n+1},\Z)$ induced by the inclusion $S_n\hookrightarrow S_{n+1}$.
Moreover, for
$q=0$, a fixed base-point $l_w\in S_{m_w}$ provides an augmentation
(splitting)
 $H_0(S_n,\Z)=
\Z\oplus \widetilde{H}_0(S_n,\Z)$, hence a splitting of the graded
$\Z[U]$-module
$$\bH_0=\calt^-_{-2m_w}\oplus \bH_{0,red}=(\oplus_{n\geq m_w}\Z)\oplus (
\oplus_{n\geq m_w}\widetilde{H}_0(S_n,\Z)).$$
We also write $\bH_{q,red}=\bH_q$ for $q\geq 1$.


\bekezdes\label{9SSP} {\bf Restrictions.}  Assume that $\frR\subset \R^s$ is a subspace
of $\R^s$ consisting of a union of certain cubes.
Then instead of $\oplus_{n\geq m_w}\, H_q(S_n,\Z)$ we can take the following module
 $\oplus_{n\geq m_w}\, H_q(S_n\cap \frR,\Z)$. It has a natural graded $\Z[U]$--module structure and augmentation.
It will be denoted by  $\bH_{*}(\frR,w)$.

In some cases it can happen that the weight functions are defined only for cubes belonging to $\frR$.

Some of the possibilities (besides $\frR=\R^s$) used in  the present note are  the following:

(1) $\frR=(\R_{\geq 0})^s$ is the first quadrant; 

(2) $\frR=R(0,c)$ is the rectangle $\{x\in\R^s \,:\, 0\leq x\leq c\}$,  where $c\geq 0$ is a lattice point;

(3) $\frR=\{x\in\R^s \,:\,  x\geq l\}$ for some fixed $l\in (\Z_{\geq0})^s$.

\bekezdes \label{bek:eu}{\bf The Euler characteristic of $\bH_*$  \cite{JEMS}.}
Though
$\bH_{*,red}(\R^s,w)$ has a finite $\Z$-rank in any fixed
homogeneous degree, in general, it is not
finitely generated over $\Z$, in fact, not even over $\Z[U]$.

Let $\frR$ be  as in \ref{9SSP} and assume that each $\bH_{q,red}(\frR,w)$ has finite $\Z$--rank.
(This happens automatically when $\frR$ is a finite rectangle.)
We define the Euler  characteristic of $\bH_*(\frR,w)$ as
$$eu(\bH_*(\frR,w)):=-\min\{w(l)\,:\, l\in \frR\cap \Z^s\} +
\sum_q(-1)^q\ \rank_\Z(\bH_{q,red}(\frR,w)).$$
If $\frR=R(0, c)$ (for a lattice point $c\geq 0$), then by  \cite{JEMS},
\begin{equation}\label{eq:eu}
\sum_{\square_q\subset \frR} (-1)^{q+1}w(\square_q)=eu(\bH_*(\frR,w)).\end{equation}

\section{The main construction}

\subsection{The setup, the topological  weight functions and topological Poincar\'e series}\label{ss:3.1}\

Let us fix  a resolution graph $\Gamma$ and semigroup elements $s_1, \ldots , s_k\in\calS'\setminus \{0\}$. Recall that we assume that $\Gamma$ is a tree of rational vertices, i.e. $M$ is a $\Q HS^3$.

Then we proceed with the construction of the lattice homology.
In this note we will consider the {\it  topological} case. However,
if the topological weight function defined below (which agrees with the
weight function of \cite{Nlattice,NGr}) is replaced by the `analytic' weight
function $w_{an,h}$ considered in \cite{AgostonNemethiI,AgostonNemethiIII}, then al the theory presented here for the topological case can be reproduced in the analytic setup.
The details will be presented in a forthcoming article.

Let $L$ be as in \ref{bek:invs}: it is  the free lattice
generated by the vertices of $\Gamma$. We denote the canonical basis by $\{E_i\}_{i\in\cV}$
and the intersection form by $(\,.\,)$.
We also fix a class $h\in L'/L$ (that is, a spin$^c$ structure on the link, cf. \cite{NOSz,Nkonyv}).
Let the distinguished characteristic element $-Z_K+2s_h$ associated with  $h$ be denoted by $k_h$.
Next,   we define the
weight function $\chi_{h}:L\to \Z$ by $\chi_{h}(l)=-\frac{1}{2}(l, l+k_h)$.
We consider the cubical decomposition of $\R^s$, and we define the weight of a $q$--cube $\square_q=(l,I)$ by
$w_h(\square_q):=\max\{\chi_{h}(v), \ \mbox{where $v$ is a vertex of $\square_q$}\}$, cf. \ref{9complex}. For $h=0$ we write $\chi_h=\chi$.

Associated with $\Gamma$ we will also consider its (topological) multivariable Poincar\'e series
in variables $\{t_i\}_{i\in \cV}$ as well: $Z(\bt)=\sum_{l'\in L'}\zeta(l')\bt^{l'}\in\Z[[t_1^{1/|H|},\ldots, t_s^{1/|H|} ]]$
is the Taylor expansion at the origin
of $$Z(\bt):= \prod_{i\in\cV}\, \big( 1-\bt^{E^*_i}\big)^{\kappa_i-2},$$
where $\kappa_i$ is the valency of $i\in\cV$ (i.e. $(E_i,E-E_i)$), and
for any $l'=\sum_il'_iE_i\in L'$ we write $\bt^{l'}:=
\prod_i t_i^{l'_i}$, see e.g. \cite{CDG,CDGEq,CDGc,LN,NPo,JEMS} or \cite[8.4]{Nkonyv}. From definition it follows that
$\zeta(l')=0$ whenever $l'\not\in \calS'$.

The series $Z(\bt)$ has a natural $H$-sum decomposition (sometimes called `equivariant decomposition'), $Z(\bt)=\sum_{h\in H} Z_h(\bt)$, where
$Z_h(\bt):= \sum_{[l']=h}\zeta(l')\bt^{l'}$.

In fact, by Fourier transformation, see e.g. \cite[(8.4.2)]{Nkonyv}, one has
$$Z_h(\bt)=\frac{1}{|H|}\cdot \sum_{\rho\in\widehat{H}}\, \rho(h)^{-1}\cdot
\prod_{i\in\cV} \big(1-\rho([E^*_i])\bt^{E^*_i}\,\big)^{\kappa_i-2},$$
 where $\widehat{H}$ denotes the Pontrjagin dual (group of characters) ${\rm Hom}(H,S^1)$ of $H$.

The following relation connects  $Z_h(\bt)$ and the weight functions $w_h$
(see \cite[Th. 11.4.6]{Nkonyv}):
\begin{equation}\label{eq:Zhw}
Z_h(\bt)=\sum_{l\in L}\, \Big( \sum_{I\subset \cV} (-1)^{|I|+1} w_h((l,I))\,\Big) \cdot \bt^{l+s_h}.
\end{equation}
The series $Z_h(\bt)$ has the following `lifting' to $\Z[[t_1^{1/|H|}, \ldots,t_s^{1/|H|},q]]$:
\begin{equation}\label{eq:Zhmot}
Z^m_h(\bt,q):=\frac{1}{1-q}\cdot \sum_{l\in L}\ \sum_{I\subset \cV} \, (-1)^{|I|}\,q^{w_h((l,I))}\cdot
\bt^{l+s_h}.
\end{equation}
Indeed, since $\sum_I(-1)^{|I|}=0$, we have
$$\lim_{q\to 1} \, Z^m_h(\bt,q)=\lim_{q\to 1} \,\sum_l\ \sum_I (-1)^{|I|}\cdot \frac{q^{w_h((l,I))}-1}
{1-q} \cdot \bt^{l+s_h}=Z_h(\bt).$$
Regarding the $\bt$--support of $Z^m_h$ we have $\sum_{I\subset \cV} \, (-1)^{|I|}\,q^{w_h((l,I))}=0$ whenever $l+s_h\not\in\calS'$, cf. \ref{lem:sup}.

This equivariant `motivic' extension $Z^m_h(\bt,q)$
is not completely the one from \cite[8.4.A, 8.4.B]{Nkonyv} (where one considers the class of the complement of linear subspace arrangements in the Grothendieck ring); the relationship between the two approaches will
be discussed in another note. (Conceptually it differs from the ones introduced in the curve case too, which are still in the spirit of the Grothendieck ring extension;  for such series   see e.g. \cite{cdg3,Gorsky} and the series of articles of Campillo, Delgado and Gusein-Zade.)

\subsection{The topological lattice homology  and some of its properties}\

Let $S_n$ be the spaces associated with the weight functions $w_h$ as in subsection \ref{ss:3.1}.

\begin{theorem}\label{th:indep} (\cite{Nlattice} or \cite[Prop. 7.3.5]{Nkonyv})
The homotopy type of the tower of spaces $\{S_n\}_n$ and the lattice homology $\bH_*(\R^s,w_h)$ is independent
of the choice of the negative definite plumbing graph $\Gamma$ of $M$, hence it depends only on the link $M$ (and the choice of \ $h\in H=H_1(M,\Z)$).
\end{theorem}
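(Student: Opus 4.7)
The plan is to invoke Neumann's plumbing calculus, which asserts that any two negative definite plumbing trees representing the same rational homology sphere $M$ are connected by a finite sequence of blow-ups and blow-downs preserving negative definiteness. Since $\{S_n\}_n$ and $\bH_*(\R^s,w_h)$ are built purely from the combinatorics of $\Gamma$ (lattice, intersection form, distinguished characteristic element), it suffices to verify invariance under a single such blow-up $\pi:\Gamma'\to\Gamma$. This reduces to two standard local moves: blowing up an edge, which inserts a new $(-1)$-vertex $E_0$ of valency two; or blowing up a vertex, which attaches a $(-1)$-vertex $E_0$ of valency one to an existing vertex whose self-intersection drops by one.

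For such a blow-up, the lattices are linked by the pullback $\pi^*:L_\Gamma\hookrightarrow L_{\Gamma'}$, and every $l\in L_{\Gamma'}$ admits a unique decomposition $l=\pi^*(\bar l)+aE_0$ with $\bar l\in L_\Gamma$ and $a\in\Z$. Using $Z_{K'}=\pi^*(Z_K)-E_0$ together with the compatibility of spin$^c$ classes and of their minimal Lipman representatives $s_h,s_{h'}$, a routine Riemann--Roch computation yields the key identity
\begin{equation*}
\chi_{h'}\bigl(\pi^*(\bar l)+aE_0\bigr)\;=\;\chi_h(\bar l)+\tfrac{a(a+1)}{2}-a\cdot\alpha(\bar l),
\end{equation*}
for an affine function $\alpha$ of $\bar l$ determined by $k_h$ and the blow-up center. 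Thus along every vertical fiber the weight is a convex integer-valued quadratic in $a$, attaining its minimum at a unique (or at most two consecutive) integer values, and growing quadratically away from there.

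The core geometric step is to construct, for each $n$, a deformation retraction of $S_n(w_{h'})\subset\R^{s+1}$ onto the slice representing $S_n(w_h)\subset\R^s$, compatibly with the inclusions $S_n\hookrightarrow S_{n+1}$ as $n$ varies. I would carry this out cube-by-cube over the base lattice: for each cube $(\bar l,I)\subset\R^s$, examine the preimage $\pi^{-1}(\bar l,I)$ as a union of cubes in $\R^{s+1}$ involving $E_0$ or not; the quadratic growth above forces its intersection with $S_n(w_{h'})$ to be either empty, a single cube centered at the fiber minimizer, or a contractible union of consecutive cubes in the $E_0$-direction. These fiberwise retractions assemble into a cellular deformation retract of $S_n(w_{h'})$ onto a subcomplex combinatorially isomorphic to $S_n(w_h)$. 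Since they respect the filtration in $n$ and carry a canonical base-point to a base-point, they yield an equivalence of towers of spaces, hence an isomorphism of graded $\Z[U]$-modules $\bH_*(\R^{s+1},w_{h'})\cong\bH_*(\R^s,w_h)$ compatible with the splitting $\bH_0=\calt^-_{-2m_w}\oplus\bH_{0,red}$.

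The main obstacle is the cube-by-cube combinatorial check comparing $w_{h'}((l,I))$ for cubes with $E_0\in I$ versus $E_0\notin I$ against the corresponding $w_h$ values of their projections. One must verify, using the $\max$ definition of $w_h$ on cubes together with the quadratic identity above, that the cube $(l,I)$ lies in $S_n(w_{h'})$ precisely when the shifted projected cube lies in $S_n(w_h)$, in each of the four combinations arising from whether $E_0\in I$ and whether the fiber coordinate $a$ has been shifted to the minimizer. Once this finite list of cases is verified, the homotopy equivalence, the $U$-equivariance, and the independence statement follow at once.
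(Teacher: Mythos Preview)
Your proposal is correct and follows essentially the same route as the references the paper cites for this result (\cite{Nlattice} and \cite[Prop.~7.3.5]{Nkonyv}); the paper itself gives no proof here. The only packaging difference is that the cited argument, as invoked later in the proof of Theorem~\ref{th:eq}, phrases the homotopy equivalence by showing that the projection $\psi_*:S_n(w_{h'})\to S_n(w_h)$ is a surjective quasi-fibration with contractible fibers (in the sense of Dold--Thom), rather than by constructing an explicit cube-by-cube deformation retraction onto a section; the underlying weight computation along the $E_0$-fiber is the same quadratic identity you wrote down.
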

In the sequel we refer to
 $\bH_*(\R^s,w_h)$ as the topological lattice homology of the link $M$ associated with the
spin$^c$ structure corresponding to\ $h$.
\begin{theorem}\label{th:euH}
(a) \cite[Th. 11.1.36]{Nkonyv}
Let $\mathfrak{sw}:{\rm Spin}^c(M)\to \Q$ be the Seiberg--Witten invariant $\sigma\mapsto
\mathfrak{sw}_\sigma(M)$ of $M$. Then
$$eu(\bH_*(\R^s,w_h))=\mathfrak{sw}_{h*\sigma_{can}}(M)-\frac{k_h^2+|\cV|}{8}.$$
(b) \cite[Th. 11.4.6]{Nkonyv} Fix some $ l\in L$ with $l+s_h\in Z_K+\calS'$. Then
$$\sum_{x\in L;\, x\not\geq l}\zeta(x+s_h)=\chi_h(l)+eu(\bH_*(\R^s,w_h)).$$
\end{theorem}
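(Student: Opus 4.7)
I would prove (b) first, since (a) then follows by identifying the `constant' term $eu(\bH_*)$ with a normalized Seiberg--Witten invariant via the surgery formula.

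For part (b), the plan is to view the left hand side as a truncated Poincar\'e sum and the right hand side as its expected `polynomial plus periodic constant' form, with $\chi_h(l)$ playing the role of the polynomial growth and $eu(\bH_*)$ the periodic constant. Starting from the cubical identity (\ref{eq:Zhw}), which reads $\zeta(x+s_h)=\sum_{I\subset\cV}(-1)^{|I|+1}w_h((x,I))$, I rewrite
\begin{equation*}
\sum_{x\in L,\,x\not\geq l}\zeta(x+s_h)=\sum_{x\not\geq l}\sum_{I\subset\cV}(-1)^{|I|+1}w_h((x,I)).
\end{equation*}
I then fix a lattice point $c\geq l$ large enough so that $c\geq \lfloor Z_K\rfloor$, and split the cubical sum according to whether $(x,I)\subset R(0,c)$ or $(x,I)$ lies outside $R(0,c)$. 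Two contributions appear: the interior cubes give $eu(\bH_*(R(0,c),w_h))=eu(\bH_*(\R^s,w_h))$ by the rectangle formula (\ref{eq:eu}) together with stabilization (the reduced part of $\bH_*$ is supported inside $R$), and the cubes `above' $l$ collapse via an Abel--type telescoping to $\chi_h(l)$, using the Riemann--Roch increment $\chi_h(l+E_i)-\chi_h(l)=-(E_i,l+s_h-s_h)-\tfrac12(E_i,E_i+k_h)$ which is strictly positive on the cone $l+s_h\in Z_K+\calS'$. The key technical obstacle is controlling the boundary strip between $l$ and $c$: one must check that the inclusion--exclusion over cubes touching the rectangle boundary telescopes exactly, which uses precisely the fact that on the stability region the maximum of $\chi_h$ on each cube $(x,I)$ is attained at the vertex $x+E_I$.

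For part (a), from (b) one obtains
\begin{equation*}
eu(\bH_*(\R^s,w_h))=\lim_{l}\Big(\sum_{x\not\geq l}\zeta(x+s_h)-\chi_h(l)\Big)
\end{equation*}
as $l\to\infty$ inside $Z_K-s_h+\calS'$; this is by definition the (multivariable) periodic constant $\mathrm{pc}(Z_h)$ of the topological zeta function. The identification of $\mathrm{pc}(Z_h)$ with the normalized Seiberg--Witten invariant is the content of the surgery formula for negative definite plumbed $\Q HS^3$'s, which ultimately descends from the Casson--Walker and Meng--Taubes identification of $\mathfrak{sw}$ with Reidemeister--Turaev torsion. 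Both $\mathrm{pc}(Z_h)$ and $\mathfrak{sw}_{h\ast\sigma_{can}}(M)$ admit compatible Dedekind--sum expressions over the plumbing; comparing these yields the stated identity with the $(k_h^2+|\cV|)/8$ correction. The main difficulty is the bookkeeping of this normalization shift, which is precisely the absolute $\Q$-grading correction in Heegaard Floer theory; stability of both sides under blow-ups (already known for $\bH_*$ by Theorem \ref{th:indep}, and for the SW side by standard surgery arguments) matches the two normalizations.
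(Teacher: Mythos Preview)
The paper does not prove Theorem~\ref{th:euH} at all: both parts are quoted results, with explicit citations to \cite[Th.~11.1.36]{Nkonyv} and \cite[Th.~11.4.6]{Nkonyv}. So there is no in-paper proof to compare against; the relevant comparison is with the arguments in those references (and in \cite{JEMS}, where the rectangle identity (\ref{eq:eu}) originates).

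Your outline for (b) is essentially the standard argument from \cite{JEMS,Nkonyv}: one uses the cubical identity (\ref{eq:Zhw}) together with the rectangle Euler-characteristic formula (\ref{eq:eu}) and the stabilisation of $eu(\bH_*(R(0,c),w_h))$ for $c\gg 0$. One point where your description is slightly off: the contribution of the region $\{x\geq l\}$ is not obtained by an ``Abel-type telescoping'' of increments of $\chi_h$. Rather, one observes that for $l+s_h\in Z_K+\calS'$ the restricted lattice homology $\bH_{*,\mathrm{red}}(R(l,c),w_h)$ vanishes (the spaces $S_n\cap R(l,c)$ are contractible in this range), so by the definition of $eu$ one has $eu(\bH_*(R(l,c),w_h))=-\min_{R(l,c)}\chi_h=-\chi_h(l)$. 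The difference of the two rectangle Euler characteristics then gives $\chi_h(l)+eu(\bH_*)$ directly. Your ``boundary strip'' discussion is unnecessary once the problem is phrased this way.

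For (a), your plan via the periodic constant of $Z_h$ and its identification with the normalized Seiberg--Witten invariant is again the route taken in \cite{Nkonyv} (building on \cite{JEMS} and the torsion/surgery formula). So overall your strategy matches the cited proofs; just replace the telescoping heuristic in (b) by the vanishing of reduced lattice homology over the shifted rectangle.
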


\begin{remark}\label{rem:ZtoI}
Part {\it (b)} of Theorem \ref{th:euH} shows that the series $Z_h(\bt)=\sum_{[l']=h}\zeta(l')\bt^{l'}$
determines $\chi_h(l)+eu(\bH_*)$ for any  $l+s_h\in Z_K+\calS'$. On the other hand,
$\chi_h(l'+E_i+E_j)-\chi_h(l'+E_i)-\chi_h(l'+E_j)+\chi_h(l')=-(E_i,E_j)$. Hence,
$Z_h$ determines the intersection form, hence all the topological invariants (including the
motivic $Z^m_h(\bt,q)$ too).

On the other hand, part {\it (a)} of Theorem \ref{th:euH} shows  that any filtration or
grading  of $\bH_*$ provides automatically a `grading', or
sum decomposition, of the normalized Seiberg--Witten invariant $\mathfrak{sw}_{h*\sigma_{can}}(M)-(k^2_h+|\cV|)/8$ \ as well.
\end{remark}

\bekezdes
In the computation or identification of $\bH_*$  {\bf reduction theorems} play key roles.
The first such reduction statement  is the following.
\begin{proposition}\label{lem:con} (\cite{NOSz} or \cite[Lemma 7.3.12]{Nkonyv})
For any $n$ we have the homotopy equivalence $S_n\cap (\R_{\geq 0})^s\hookrightarrow S_n$, and the graded $\Z[U]$--module isomorphism
$\bH_*((\R_{\geq 0})^s, w_h)\simeq\bH_*(\R^s, w_h)$.

In fact, one can restrict even more whenever $Z_K\geq 0$ (e.g. in the minimal good resolution): for any $c\in L$, $c\geq
\lfloor Z_K \rfloor$,  the inclusion $R(0,c)\hookrightarrow (\R_{\geq 0})^s$  induces
a homotopy equivalence $S_n\cap R(0,c)\hookrightarrow S_n\cap (\R_{\geq 0})^s$ and
an isomorphism of graded $\Z[U]$ modules
$\bH_*(R(0,c), w_h)\simeq \bH_*((\R_{\geq 0})^s, w_h)$. 

This second statement  also shows that $S_n$ is contractible for $n\gg 0$.
\end{proposition}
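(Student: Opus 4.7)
The plan is to build, at each level $n$, a cellular strong deformation retract $S_n\to S_n\cap (\R_{\geq 0})^s$ and then a further retract $S_n\cap (\R_{\geq 0})^s\to S_n\cap R(0,c)$. Because these retracts can be arranged to be compatible with the inclusions $S_n\hookrightarrow S_{n+1}$ that generate the $U$--action, they automatically upgrade to the stated graded $\Z[U]$--module isomorphisms at the level of lattice homology.

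The whole argument is driven by the Riemann--Roch style one-step identity
\begin{equation}\label{eq:plan-rr}
\chi_h(l+E_i)-\chi_h(l)=1-(E_i, l+s_h),
\end{equation}
obtained from $\chi_h(l)=-\tfrac12(l,l+k_h)$, $k_h=-Z_K+2s_h$, and the adjunction identity $(E_i,Z_K)=E_i^2+2$. Consequently $\chi_h(l+E_i)\leq \chi_h(l)$ holds exactly when $(E_i,l+s_h)\geq 1$, and the reversed inequality $\chi_h(l-E_i)\leq \chi_h(l)$ holds exactly when $(E_i,l+s_h)\leq 1+E_i^2$. The two standing inputs are $(E_i,s_h)\leq 0$ (since $s_h\in\calS'$) and the negative-definite tree structure $E_i^2\leq -1$, $(E_i,E_j)\in\{0,1\}$ for $i\ne j$.

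For the first retract I would set up a coordinate-by-coordinate sweep. The combinatorial claim to establish is: for every lattice point $l\in\Omega$ with some $l_i<0$, one can order the slides so that negative coordinates at the leaves of the tree are handled first, ensuring that at the moment of sliding $\sum_{j\ne i}(E_i,E_j)\,l_j$ is non-negative; then $(E_i,l+s_h)\geq 1$ and \eqref{eq:plan-rr} forces $\chi_h(l+E_i)\leq \chi_h(l)$. The same slide works cubically: for a cube $\square=(l,I)$ with $i\notin I$, every vertex $l+E_J$ ($J\subset I$) satisfies the same inequality because $(E_i,E_J)\geq 0$, so the translated cube $(l+E_i,I)$ has weight $\leq w_h(\square)$ and the straight-line homotopy between them lies in $S_n$; for $i\in I$ one first collapses the cube onto its two $E_i$-parallel faces and argues on each face. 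Iterating produces the desired cellular retract onto $S_n\cap (\R_{\geq 0})^s$.

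For the second retract the hypothesis $c\geq \lfloor Z_K\rfloor$ and $l_i>c_i$ together force $l_i\geq \lfloor Z_{K,i}\rfloor+1$; writing $m:=l-Z_K$ with $m_i\geq 1$, one gets $E_i^2 m_i\leq E_i^2\leq -1$, which, combined with an inward sweep starting from the most extremal coordinates, yields $(E_i,l+s_h)\leq 1+E_i^2$ and hence $\chi_h(l-E_i)\leq \chi_h(l)$. The cubical upgrade is symmetric. The main obstacle I expect in both retracts is precisely this ordering argument: the off-diagonal contributions $(E_i,E_j)\,l_j$ can a priori spoil the naive monotonicity, so the sweep must be chosen carefully in accordance with the tree structure (leaves-first for the first-quadrant reduction, appropriately adapted to $Z_K$ for the rectangle). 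Once the two retracts are in place, contractibility of $S_n$ for $n\gg 0$ is immediate: $R(0,c)$ is a finite rectangle and $\chi_h$ is bounded on $R(0,c)\cap L$, so for any $n$ exceeding this bound one has $S_n\cap R(0,c)=R(0,c)$, a contractible cube.
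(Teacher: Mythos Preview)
The paper does not prove this proposition; it simply cites \cite{NOSz} and \cite[Lemma~7.3.12]{Nkonyv}. So there is no in-paper argument to compare against. Your overall plan---slide cubes one unit at a time in the $E_i$-directions, governed by the identity $\chi_h(l+E_i)-\chi_h(l)=1-(E_i,l+s_h)$, and then note that the resulting retracts are compatible with $S_n\hookrightarrow S_{n+1}$---is exactly the approach taken in those references, and your deduction of the contractibility of $S_n$ for $n\gg 0$ from the rectangle reduction is correct.

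The genuine gap is the ordering, precisely where you already suspect it. Your ``leaves-first'' sweep does \emph{not} guarantee $\sum_{j\ne i}(E_i,E_j)\,l_j\geq 0$ at the moment of the slide. Already on the $A_2$ graph with $h=0$ and $l=(-1,-10)$, the leaf $E_1$ gives $(E_1,l)=2-10=-8<1$, so sliding $l\mapsto l+E_1$ \emph{increases} $\chi$; no ordering of the two leaves fixes this. What the cited references actually use is a Laufer-type \emph{existence} step rather than a fixed sweep: whenever some $l_i<0$ there exists an index $i$ with $l_i<0$ and $(E_i,l+s_h)\geq 1$. For $h=0$ this follows directly from negative definiteness: if not, then $(E_i,l)\leq 0$ for every $i\in I^-:=\{j:l_j<0\}$, hence
\[
(l^-,l)=\sum_{i\in I^-}l_i\,(E_i,l)\geq 0,
\]
while on the other hand $(l^-,l)=(l^-,l^-)+(l^-,l^+)$ with $(l^-,l^-)<0$ by definiteness and $(l^-,l^+)\leq 0$ since the supports are disjoint and off-diagonals are $\geq 0$; contradiction. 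The retraction is then built out of these guaranteed good directions (a discrete-Morse style pairing of $(l,I)$ with $(l,I\cup\{i\})$), not from a predetermined coordinate order. The same remark applies to your ``inward sweep'' for the retract onto $R(0,c)$: you should expect to choose, at each overshooting lattice point, a direction in which the analogous inequality $(E_i,l+s_h)\leq 1+E_i^2$ holds, rather than fix a global order tied to the tree.
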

In particular, in the sequel we will work with the restriction of the theory to the quadrant $\frX:=(\R_{\geq 0})^s$,
and the spaces $S_n$ will denote $S_n\cap (\R_{\geq 0})^s\subset \frX$.
\bekezdes\label{bek:grroot}
The module
$\bH_0=\oplus_n H_0(S_n,\Z)$ can be enhanced  by  its `{\bf homological graded root}' $\mathfrak{R}_h$.
For the definition of the `cohomological graded root'
and its relation with $\bH^0$ see e.g. \cite{NGr,Nlattice,Nkonyv}.
For the homological version  see \cite{NemCurves}.
A graded root is an infinite tree with $\Z$--graded/weighted  vertices (and some additional properties, see
[loc.cit]).
The vertices weighted by  $(-n)$ of the root  correspond to the connected components of $S_n$. Moreover,
any edge connects two vertices with weight difference one and these edges
 correspond exactly
  to inclusions of connected components of $S_{n-1}$ into connected components of $S_n$ for some $n$.

The  relation of the graded root
and  $\bH_0$ is the following.
Each vertex $v$ weighted by $-w_h(v)$ in the root denotes a free summand $\Z=\Z\langle 1_v\rangle\in (\bH_0)_{-2w_h(v)}$, and if $[v,u]$ is an  edge of the root
connecting the vertices $v$ and  $u$ with $w_h(v)=w_h(u)+1$, then $U(1_u)=1_v\in (\bH_0)_{-2w_h(v)}$.
For different examples (and more explanations) see below.

By Theorem \ref{th:indep}, $\mathfrak{R}_h$ depends only on the link $M$ and the choice of $h\in H$ (i.e. of the choice of the
spin$^c$ structure) and not on the choice of the particular negative definite graph $\Gamma$.

\subsection{The `hat'--homology $\widehat{\bH}_*$.}\label{ss:hat}
Using the tower of  spaces $\{S_n\}_n$ we can   define  the sequence of relative homologies as well:
$$\widehat{\bH}_b(\frX,w_h):= \oplus_{n}\, H_b(S_n,S_{n-1},\Z).$$
It is a graded $\Z$--module, with $(-2n)$--homogeneous summand $H_b(S_n,S_{n-1},\Z)$. The $U$--action
induced by the inclusion of pairs $(S_{n}, S_{n-1})\hookrightarrow (S_{n+1},S_{n})$  is trivial.
By Proposition  \ref{lem:con},  $ H_b(S_n,S_{n-1},\Z)\not=0$ only for finitely many pairs $(n,b)$.
By a  homological argument, based on the contractibility of $S_n$ for $n\gg 0$ (see also
Remark \ref{rem:hat} too)
\begin{equation}\label{eq:hateu}
\sum_{n,b}\ (-1)^b \,{\rm rank}\, H_b(S_n, S_{n-1},\Z)=1.
\end{equation}
One also has the exact sequence
$\cdots \to \bH_b\stackrel{U}{\longrightarrow} \bH_b\longrightarrow \widehat{\bH}_b\longrightarrow \bH_{b-1}
\stackrel{U}{\longrightarrow} \cdots$.

\subsection{The filtration}
Next, we focus on the semigroup elements $s=\{s_1,\ldots, s_t\}$, $s_j\in\calS'\setminus \{0\}$.
They provide a $\Z^t$--filtration on each $S_n$.
Indeed, for any fixed $n$ and $d=(d_1,\ldots, d_t)\in (\Z_{\geq 0})^t$ set
$$\frX_{-d}=\frX_{-d}(s)=\frX_{-(d_1, \ldots, d_t)}(s_1, \ldots, s_t):=
\cup\{(l,I)\,:\,
l\in (\Z_{\geq 0})^{|\calv|},\ \mbox{and} \ (s_j,l)\leq -d_j \ \mbox{for all $j$}\}\subset \frX.$$
Note that if $(d_1',\ldots, d_t')\geq (d_1,\ldots, d_t)$ then  $\frX_{-(d_1', \ldots, d'_t)}(s)\subset
\frX_{-(d_1, \ldots, d_t)}(s)$.

Since  the vertices of $(l,I)$ consists of lattice points of type
$\{l+E_J\}_{J\subset I}$,  if $(s_j,l)\leq -d_j$ then necessarily   $(s_j,l+E_J)\leq -d_j$ as well. In particular,
if $l\in\frX_{-d}$ then $(l,I)\in \frX_{-d}$ too for any $I\subset \cV$.

Then, for any collection of $s$  and $d$ we have the augmented and graded $\Z[U]$--module
$$\bH_*(\frX_{-d}(s), w_h)=\oplus_n \, H_*(S_n\cap \frX_{-d},\Z)$$ and the graded $\Z[U]$--module morphism
$\bH_*(\frX_{-d}(s), w_h)\to \bH_*(\frX, w_h)$ induced by the inclusion $\frX_{-d}(s)\hookrightarrow \frX$.

In the next paragraph we will show that the homotopy type of the spaces $S_n\cap \frX_{-d}$
are stable with respect to the $\sim$-equivalence relation and a (restricted) reduction procedure.

\begin{example}
Let $t=|\cV|$ and set $s=\{s_i\}_{i\in\cV}:=\{E^*_i\}_{i\in\cV}$. Then for  any $l=\sum_il_iE_i$ and $d=\{d_i\}_{i\in\cV}$, $(s_i,l)\leq -d_i$ reads as
$l_i\geq d_i$. Hence, $\frX_{-d}(s)$ is the shifted first quadrant $\{l:\, l_i\geq d_i\ \mbox{for all $i$}\}$.
\end{example}

\bekezdes {\bf Stability with respect to $\sim$.}
Let us fix now the class of $(X,C,b)$ and consider two representatives $\Gamma_{\phi_1}$ and $\Gamma_{\phi_2}$
such that $\Gamma_{\phi_1}$ is obtained from $\Gamma_{\phi_2}$ by a non-base point blow up $\psi$.
Consider also    $\{s_{1,j}\}_{j=1}^t $, elements of $\calS'_{\phi_1}$, and
 $\{s_{2,j}\}_{j=1}^t $, elements of $\calS'_{\phi_2}$,
such that  $\psi^*(s_{2,j})=s_{1,j}$.

\begin{theorem}\label{th:eq}
The morphism $\psi_*:L_{\phi_1}\to L_{\phi_2}$  induces homotopy equivalences of (pair of) spaces:

(a) $S_{1,n}\to S_{2,n}$,

(b) $(S_{1,n}, S_{1,n-1}) \to (S_{2,n}, S_{2,n-1})$,

(c)  $S_{1,n}\cap \frX_{-d}(\{s_{1,j}\}_j)\to S_{2,n}\cap \frX_{-d}(\{s_{2,j}\}_j)$,

(d) $(S_{1,n}\cap \frX_{-d}(\{s_{1,j}\}_j), S_{1,n}\cap \frX_{-d-1}(\{s_{1,j}\}_j))
\to ( S_{2,n}\cap \frX_{-d}(\{s_{2,j}\}_j), S_{2,n}\cap \frX_{-d-1}(\{s_{2,j}\}_j))$,

(e) $(S_{1,n}\cap \frX_{-d}(\{s_{1,j}\}_j),
 S_{1,n-1}\cap \frX_{-d}(\{s_{1,j}\}_j))
 \to (S_{2,n}\cap \frX_{-d}(\{s_{2,j}\}_j),
 S_{2,n-1}\cap \frX_{-d}(\{s_{2,j}\}_j)).$
\end{theorem}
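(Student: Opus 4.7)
The plan is to prove all five statements simultaneously by showing that the linear map $\psi_*$ admits a cellular section $\psi^*$ which is part of a strong deformation retraction compatible with both the weight filtration $\{S_{\cdot,n}\}_n$ and the semigroup filtration $\{\frX_{-d}\}_d$. The strategy adapts the fiberwise Morse-theoretic argument used in the proof of Theorem \ref{th:indep} (cf.\ \cite{Nlattice}, \cite[Section 7.3]{Nkonyv}) and augments it with a $\psi_*$-equivariance check for the $s$-filtration.

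First I collect two algebraic inputs. Since $\psi$ is a non-base point blow-up producing $E_{new}$ with $E_{new}^2 = -1$, the standard formulae give $s_{h,\phi_1} = \psi^*(s_{h,\phi_2})$ and $-Z_{K,\phi_1} = \psi^*(-Z_{K,\phi_2}) + E_{new}$, whence $k_{h,\phi_1} = \psi^*(k_{h,\phi_2}) - E_{new}$. Every $l_1 \in L_{\phi_1}$ admits a unique decomposition $l_1 = \psi^*(\psi_*(l_1)) + aE_{new}$ with $a\in\Z$, and a direct calculation shows that $a\mapsto \chi_{h,\phi_1}(\psi^*(l_2) + aE_{new})$ is a strictly convex quadratic in $a$ whose unique integer minimum has value $\chi_{h,\phi_2}(l_2)$; this is the one-step Laufer reduction along $E_{new}$. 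In particular $\chi_{h,\phi_1}\circ\psi^* = \chi_{h,\phi_2}$. Moreover, by the projection formula and the hypothesis $\psi^*(s_{2,j}) = s_{1,j}$, for every $l_1 \in L_{\phi_1}$
\[
(s_{1,j}, l_1) \;=\; (\psi^*(s_{2,j}), l_1) \;=\; (s_{2,j}, \psi_*(l_1)),
\]
so the condition $(s_{1,j}, l_1) \leq -d_j$ depends only on $\psi_*(l_1)$. Thus each $\frX_{-d}(\{s_{1,j}\}_j)$ is the $\psi_*$-preimage of $\frX_{-d}(\{s_{2,j}\}_j)$, hence a union of whole fibers of $\psi_*$.

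Second, I construct a $\psi_*$-equivariant cellular strong deformation retraction $r \colon \frX_{\phi_1} \to \psi^*(\frX_{\phi_2})$. Each fiber of the linear projection $\psi_*\colon \R^{|\cV_{\phi_1}|} \to \R^{|\cV_{\phi_2}|}$ is an affine line in the $E_{new}$-direction; the convex quadratic above canonically retracts the lattice points on this line onto the vertex at $a_{\min}$, and the retraction extends to cubes $(l, I)$ with $new \in I$ by collapsing the prism along $E_{new}$. The convexity of $\chi_{h,\phi_1}$ along $E_{new}$ implies that the weight of such a prism is the maximum of the weights of its two $E_{new}$-faces, and these weights are monotone during the collapse, so $r$ restricts to a strong deformation retraction of $S_{1,n}$ onto $\psi^*(S_{2,n})$. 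Composing with the cellular homeomorphism $\psi^*(S_{2,n}) \cong S_{2,n}$ identifies the map in (a) with $\psi_*$ and exhibits it as a homotopy equivalence; the pair version yields (b). Because $r$ is defined fiberwise in the $E_{new}$-direction and each $\frX_{-d}(\{s_{1,j}\}_j)$ (and $\frX_{-d-1}$) is a union of whole fibers, $r$ also restricts to each filtered subspace and each filtered pair, giving (c), (d) and (e) in one stroke.

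The main obstacle is the careful combinatorial treatment of cubes $(l, I)$ with $new \in I$: these are genuine $|I|$-dimensional prisms along $E_{new}$ and the cellular collapse must simultaneously be a deformation retract of cubes (and pairs of cubes), lower or preserve the weight of every face, and stay inside each $\frX_{-d}$. The convexity of $\chi_{h,\phi_1}$ along $E_{new}$ delivers the weight control, while the $\psi_*$-equivariance of the $s$-filtration delivers the filtration control; once these two ingredients are assembled into a single cellular retraction, the five homotopy equivalences (a)--(e) follow uniformly.
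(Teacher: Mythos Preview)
Your approach is essentially the same as the paper's, and the two key ingredients you isolate are exactly the right ones: (i) the convexity of $\chi_{h,\phi_1}$ along the $E_{new}$--direction, which makes each fiber of $\psi_*$ an interval, and (ii) the projection--formula identity $(s_{1,j},l_1)=(s_{2,j},\psi_*(l_1))$, which exhibits $\frX_{-d}(\{s_{1,j}\}_j)$ as the full $\psi_*$--preimage of $\frX_{-d}(\{s_{2,j}\}_j)$. The paper packages (i) by citing \cite[Prop.~7.3.5]{Nkonyv}, which states that $\psi_*\colon S_{1,n}\to S_{2,n}$ is a quasi-fibration with contractible fibers; it then obtains the pair statements (b), (d), (e) via the five lemma, and (c) by observing that a quasi-fibration restricted over a saturated subspace is again a quasi-fibration. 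Your argument unpacks the content of that cited proposition rather than quoting it, but the mechanism is identical.

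Two imprecisions in your write-up deserve tightening. First, the integer minimum of $a\mapsto \chi_{h,\phi_1}(\psi^*(l_2)+aE_{new})=\chi_{h,\phi_2}(l_2)+\binom{a}{2}$ is not unique: it is attained at both $a=0$ and $a=1$. This is harmless (choose $a=0$), but drop the word ``unique''. Second, and more substantively, $\psi^*(S_{2,n})$ is \emph{not} a cubical subcomplex of $S_{1,n}$: in the point blow-up case the locus $\{a=0\}$ is the diagonal hyperplane $\{m_{new}=m_{i_0}\}$, which is not coordinate-aligned, so the phrase ``cellular strong deformation retraction onto $\psi^*(S_{2,n})$'' is not literally correct. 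The honest fix is either to phrase the argument via quasi-fibrations (as the paper does), or to replace the single linear retraction by an iterated elementary collapse: for $a(l_1)\geq 1$ the convexity gives $\chi(l_1-E_{new})\leq\chi(l_1)$, so every prism $(l_1-E_{new},I'\cup\{new\})$ with top face in $S_{1,n}$ lies entirely in $S_{1,n}$, and symmetrically for $a(l_1)\leq 0$. With that adjustment your argument goes through and yields (a)--(e) simultaneously.
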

\begin{proof}
By  \cite[Prop. 7.3.5]{Nkonyv}, see also Theorem \ref{th:indep}, the restriction
of $\psi_*\otimes  \R$ to $S_{1,n}$ has its image in $S_{2,n}$,
it  is surjective onto $S_{2,n}$,  and it
is a homotopy equivalence.
Furthermore, it is compatible with the inclusions
$ S_{1,n-1}\hookrightarrow S_{1,n}$,   $ S_{2,n-1}\hookrightarrow S_{2,n}$.
Then the equivalence of the pairs follows by the `five
lemma' associated with the long exact sequences of the homotopy  groups.

Note also that the proof of \cite[Prop. 7.3.5]{Nkonyv} valid for  $\psi_*:S_{1,n}\to S_{2,n}$ works without modification for
 $S_{1,n}\cap \frX_{-d}(\{s_{1,j}\}_j)\to S_{2,n}\cap \frX_{-d}(\{s_{2,j}\}_j)$ too.
 In fact, $\psi_*: S_{1,n}\to S_{2,n} $ is a quasi-fibration, hence  the  restriction
  $S_{1,n}\cap \frX_{-d}(\{s_{1,j}\}_j)\to S_{2,n}\cap \frX_{-d}(\{s_{2,j}\}_j)$  remains  surjective and a quasi-fibration.
 Here we need $(\psi_*)^{-1}(\frX_{-d}(\{s_{2,j}\}_j))=\frX_{-d}(\{s_{1,j}\}_j)$,  which follows from
  $\psi^*(s_{2,j})=s_{1,j}$.
\end{proof}

\subsection{The filtered  reduction theorems}\label{ss:redth}

\bekezdes In this subsection we show that the Reduction Theorem
of topological lattice homology is compatible with certain filtrations.
In the first part of the next discussion we  follow \cite{LThesis,LN,Nkonyv}.

The reduction theorem reduces the rank of the lattice which produces the cubical decomposition.
 The rank of the new lattice is the cardinality of a set of `bad' vertices.
 Recall that a subset of vertices $\overline{\cV}$ is called SR--set (or `bad'), if by replacing the
 Euler decorations $e_i$ of the vertices $i\in \overline{\cV}$ by more negative integers
 $e_i'\leq e_i$ we get a rational graph.
 Let $\bar{s}$ be the cardinality of $\overline{\cV}$.
  A graph is called AR--graph ( {\bf `almost rational'})
 if it admits an SR--set of cardinality less than or equal to one.

 Once $\overline{\cV}$ is fixed,
 we consider the lattice  $\overline{L}$ generated by $\{E_{i}\}_{i\in\overline{\cV}}$, the first  quadrant
 $(\R_{\geq 0})^{\bar{s}}$ of $\overline{L}\otimes\R$ and its cubical decomposition. For any
$\bar{l}=\sum_{i\in \overline{\cV}} l_iE_i \in(\Z_{\geq 0})^{\bar{s}}\subset \overline{L}$ one defines an universal cycle $x_h(\bar{l})$, depending on $\bar{l}$ and $h$, with the following properties. For any $i\in\overline{\cV}$ the $E_i$--coefficient of $x_h(\bar{l})$ is $l_i$, $(x_h(\bar{l})+s_h, E_i)\leq 0$ for any $i\not\in\overline{\cV}$, and $x_h(\bar{l})$ is minimal with these two properties (cf. \cite[7.3.24]{Nkonyv}). Then  we define the wight function $\overline{w}_h(\bar{l}):= \chi_h(x_h(\bar{l}))$. This extends to the cubes of $\overline{L}\otimes\R$ via the usual rule: $\overline{w}_h(\square)=\max\{\overline{w}_h(v)\}$, where $v$ runs over the vertices of $\square$.

\begin{theorem}\label{th:redth}\cite{LThesis,LN} {\bf (Reduction Theorem)}
$\bH_*((\R_{\geq 0})^{s}, w_h)\simeq \bH_*((\R_{\geq 0})^{\bar{s}}, \overline{w}_h)$.
\end{theorem}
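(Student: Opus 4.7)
The plan is to establish, for every $n$, a homotopy equivalence $S_n \cap (\R_{\geq 0})^s \simeq \overline{S}_n$ (where $\overline{S}_n$ is the sublevel set of $\overline{w}_h$ in $(\R_{\geq 0})^{\bar s}$) that is natural in $n$, i.e.\ compatible with the inclusions $S_n \hookrightarrow S_{n+1}$ realizing the $U$--action. Summing over $n$ then yields the graded $\Z[U]$--module isomorphism.

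First I would record the basic convexity input: for any vertex $v$ and any $l\in L$, the second discrete difference of $k\mapsto \chi_h(l+kE_v)$ equals $-(E_v,E_v)>0$, so $\chi_h$ is strictly convex along every coordinate direction and its sublevel intersected with a coordinate line is an interval. Next I would use this to analyze the projection $p\colon (\R_{\geq 0})^s \to (\R_{\geq 0})^{\bar s}$ onto the bad coordinates. The universal cycle $x_h(\bar l)$ is by construction the unique minimizer of $\chi_h$ on the affine fiber $p^{-1}(\bar l)\cap L$, cut out by the Lipman-type inequalities $(x_h(\bar l)+s_h,E_i)\le 0$ for $i\notin\overline\cV$; existence and uniqueness of this minimizer hinge on the rationality of the subgraph on $\cV\setminus\overline\cV$ (whose restricted lattice cohomology vanishes, so the Laufer-type algorithm terminates at a single point). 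In particular $\overline{w}_h(\bar l) = \chi_h(x_h(\bar l))$ records exactly the threshold $n$ at which the fiber enters $S_n$.

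The core technical step is to show that for $\bar l$ with $\overline{w}_h(\bar l)\le n$, the fiber $p^{-1}(\bar l)\cap S_n$ is contractible. Iterating the one-variable convexity in the non-bad directions (one direction at a time) exhibits this fiber as a cubical subcomplex that is ``star-shaped'' relative to the vertex $x_h(\bar l)$. I would promote this to an actual deformation retraction via a discrete Morse / Forman-style acyclic matching: pair each non-minimal cube $(l,I)$ in the fiber with the cube obtained by toggling a carefully chosen non-bad coordinate toward the corresponding coordinate of $x_h(\bar l)$, verifying that the matching is acyclic and that its unique critical cell is $x_h(\bar l)$. Condition (b) on weight functions --- that $w_h$ on a cube equals the maximum over its vertices --- guarantees that such a toggle never increases $w_h$, so the matching stays inside $S_n$.

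Having contractible fibers, I would assemble the global equivalence either by gluing the fiber-wise matchings over all $\bar l$ into a single acyclic matching on $S_n$ with critical cells in bijection with the cubes of $\overline{S}_n$, or, equivalently, by induction on $|\cV\setminus\overline{\cV}|$: remove one non-bad vertex at a time, at each stage using the one-dimensional collapse to replace $S_n$ by its intersection with the hyperplane $\{l_v = (x_h)_v\}$ up to homotopy equivalence. Because the matchings/retractions are defined uniformly in $n$ (they depend only on $\chi_h$ and the combinatorics of $\Gamma$, not on the threshold), they commute with the inclusions $S_n\hookrightarrow S_{n+1}$, giving the $\Z[U]$--equivariance. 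The main obstacle I anticipate is the combinatorial bookkeeping of the Morse matching when cubes have support in both bad and non-bad directions: one must ensure the toggled cube still lies in $(\R_{\geq 0})^s$ (which follows from $x_h(\bar l)\ge 0$ for $\bar l\ge 0$, a consequence of $s_h\in\calS'$ and the minimality of $x_h$), and that the matchings defined fiber-wise patch coherently so that no closed $V$-paths are created across fibers.
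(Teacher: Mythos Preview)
Your proposal is sound in outline and identifies the same geometric core as the cited proof in \cite{LThesis,LN}: the projection $pr:(\R_{\geq 0})^s\to(\R_{\geq 0})^{\bar s}$ restricts to a map $S_n(w_h)\to S_n(\overline{w}_h)$ whose fibers are contractible, and this contractibility ultimately rests on the fact that the restriction of $\chi_h$ to a fiber is a Riemann--Roch expression for the (rational) subgraph on $\cV\setminus\overline{\cV}$.

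Where you diverge is in how you upgrade ``contractible fibers'' to a homotopy equivalence. The references establish that $pr$ is a \emph{quasifibration} in the sense of Dold--Thom \cite{DoldThom}; once that is known, contractibility of the fibers immediately gives a weak homotopy equivalence, and naturality in $n$ (hence $\Z[U]$--equivariance) is automatic since the same continuous map works for every level. You instead propose an explicit discrete-Morse collapse. This is a legitimate alternative and has the advantage of being constructive, but it trades the clean Dold--Thom criterion for the combinatorial burden you yourself flag: coherence of the fiberwise matchings across cubes $(l,I)$ with $I$ meeting both $\overline{\cV}$ and its complement. That step is genuinely delicate and is exactly what the quasifibration language absorbs. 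Your inductive variant (peeling off one non-bad vertex at a time, where each fiber is literally an interval by one-variable convexity) is closer to how one verifies the quasifibration hypothesis in practice and is probably the cleanest way to execute your strategy.

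Two small corrections of language: $x_h(\bar l)$ is defined in the paper as the \emph{order}-minimal cycle satisfying the Lipman inequalities, not a priori as the $\chi_h$--minimizer (the two coincide, but uniqueness of the latter is not asserted); and what the SR--condition buys is not merely rationality of the subgraph on $\cV\setminus\overline{\cV}$, but that after fixing the bad coordinates the residual weight function on each fiber is of rational type for \emph{every} choice of $\bar l$ --- this is what drives both the Laufer algorithm and the fiber contractibility uniformly.
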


The above isomorphism is induced by the natural projection $pr: (\R_{\geq 0})^{s}\to (\R_{\geq 0})^{\bar{s}}$, $\sum_{i\in\cV}l_iE_i\mapsto\sum_{i\in\overline{\cV}}l_iE_i$. Indeed,  $pr$
 induces a homotopy equivalence
$S_n(w_h)\to S_n(\overline{w}_h)$ for every $n\in\Z$.

This homotopy equivalence is compatible with filtrations of the following type.
Let us consider  $s=\{s_1,\ldots, s_t\}$, where each $s_j\in \calS'\setminus \{0\}$ has the form
$s_j=\sum_{i\in\overline{\cV}}n_{j, i} E^*_i$ with $n_{j,i}\in\Z_{\geq 0}$ (i.e.
${\rm Supp}(s_j)\subset \overline{\cV}$). Then
each $s$ defines a filtration in both $L$ and $\overline{L}$ via the same rule:
 $\frX_{-(d_1,\ldots, d_t)}=\{l\in (\R_{\geq 0})^{s}\,:\, ( s_j,l)\leq -d_j\ \mbox{for all $j$}\}$ and
 $\overline{\frX}_{-(d_1,\ldots, d_t)}=\{\bar{l}\in (\R_{\geq 0})^{\bar{s}} \,:\, ( s_j,\bar{l})\leq -d_j\ \mbox{for all $j$}\}$.

 The improvement of the above Reduction Theorem is the following.

 \begin{theorem}\label{th:redth2} {\bf (Filtered  Reduction Theorem)}
  The projection map  induces a homotopy equivalence
  $pr:S_n(w_h)\cap \frX_{-(d_1,\ldots , d_t)}\to S_n(\overline{w}_h)\cap \overline{\frX}_{-(d_1,\ldots , d_t)}$
  for any $n$ and $(d_1,\ldots, d_t)$.
\end{theorem}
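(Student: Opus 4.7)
The strategy is to upgrade the unfiltered Reduction Theorem~\ref{th:redth} by observing that the filtration $\frX_{-d}(s)$ is pulled back along the projection $pr$, and then to restrict the quasi-fibration structure used in the proof of \ref{th:redth} to the pre-image of $\overline{\frX}_{-d}$, exactly as in the second half of the proof of Theorem~\ref{th:eq}.

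The first step is to establish the pull-back identity
\begin{equation*}
\frX_{-(d_1,\ldots,d_t)} \;=\; pr^{-1}\bigl(\overline{\frX}_{-(d_1,\ldots,d_t)}\bigr).
\end{equation*}
The hypothesis $\mathrm{Supp}(s_j)\subset \overline{\cV}$ is the key: writing $s_j=\sum_{i\in\overline{\cV}} n_{j,i}E_i^*$ and using $(E_i^*,E_k)=-\delta_{ik}$, one computes, for any $l=\sum_{k\in\cV} l_kE_k\in L$, that $(s_j,l)=-\sum_{i\in\overline{\cV}} n_{j,i}l_i=(s_j,pr(l))$. In particular $(s_j,E_i)=0$ for $i\notin\overline{\cV}$, so moving in the non-$\overline{\cV}$ directions leaves every filtration function $(s_j,\cdot)$ unchanged. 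Since a cube $(l,I)$ lies in $\frX_{-d}$ exactly when $(s_j,l)\le -d_j$ for all $j$, the identity is automatic at the cubical level.

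The second step is to recall how Theorem~\ref{th:redth} is actually proved in \cite{LThesis,LN}. The projection $pr:S_n(w_h)\to S_n(\overline{w}_h)$ is a surjective quasi-fibration; the fiber over $\bar l\in(\Z_{\ge 0})^{\bar s}$ is a contractible cubical subcomplex of the affine subspace $\bar l+L^{\perp}\!\otimes\R$, with $L^{\perp}=\langle E_i:i\notin\overline{\cV}\rangle$, realized by the algorithm that builds the universal cycle $x_h(\bar l)$ through successive moves $l\mapsto l+E_i$ with $i\notin\overline{\cV}$. Combining this with the pull-back identity above, the restricted map
\begin{equation*}
pr\colon S_n(w_h)\cap \frX_{-d}\;\longrightarrow\; S_n(\overline{w}_h)\cap \overline{\frX}_{-d}
\end{equation*}
has the \emph{same} fibers (over points of $\overline{\frX}_{-d}$) as the unrestricted map, because these fibers live entirely in the $L^{\perp}$-direction on which each $(s_j,\cdot)$ is constant. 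Hence the restriction is again a surjective quasi-fibration with contractible fibers, so it is a homotopy equivalence. This is the exact analogue of the argument $(\psi_*)^{-1}(\frX_{-d}(\{s_{2,j}\}_j))=\frX_{-d}(\{s_{1,j}\}_j)$ employed at the end of the proof of Theorem~\ref{th:eq}.

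The main obstacle is ensuring that the quasi-fibration / deformation-retraction underlying \ref{th:redth} can be presented with all its `moves' taking place in $L^{\perp}$, so that it respects the filtration pointwise and not merely up to homotopy. This is essentially tautological from the algorithmic definition of $x_h(\bar l)$, since that algorithm adds only basis vectors $E_i$ with $i\notin\overline{\cV}$; nevertheless, one must verify that along the retraction each intermediate cube continues to have weight $\le n$ and lies in $\frX_{-d}$, which follows because the weight functions used in the Laszlo--Nagy reduction are designed so that each such elementary move is non-increasing on $\chi_h$, while the filtration value $(s_j,\cdot)$ is constant by the pull-back identity. Once this compatibility is in place, the conclusion of the filtered Reduction Theorem follows from the unfiltered one with no additional work.
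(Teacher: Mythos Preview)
Your proof is correct and follows essentially the same route as the paper: both identify $\frX_{-d}=pr^{-1}(\overline{\frX}_{-d})$ from the hypothesis ${\rm Supp}(s_j)\subset\overline{\cV}$, then invoke the quasi-fibration with contractible fibers from the proof of the unfiltered Reduction Theorem and restrict it over $\overline{\frX}_{-d}$. One small correction: the reference \cite{LN} is L\'aszl\'o--N\'emethi, not ``Laszlo--Nagy''.
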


\begin{proof}
According to the proof of the Reduction Theorem \ref{th:redth}
from \cite{LThesis,LN}, the projection induces a surjection
$pr:S_n(w_h)\to S_n(\overline{w}_h)$, which is a quasifibration with contractible fibers. Note also that the
filtration of $S_n(w_h)$ is the pullback of the filtration of $S_n(\overline{w}_h)$.
In particular,  $S_n(w_h)\cap \frX_{-(d_1,\ldots , d_t)}\to S_n(\overline{w}_h)\cap \overline{\frX}_{-(d_1,\ldots , d_t)}$
is a quasifibration with contractible fibers too.
\end{proof}

\section{The filtration associated with one semigroup element $(t=1)$}\label{s:levfiltr}

\subsection{The submodules ${\rm F}_{-d}\bH_*(\frX,w_h)$.}\

Let us fix a good embedded resolution graph $\Gamma_\phi$, an element $h\in H$ and a semigroup element
$s=\sum_{i\in {\rm Supp(s)}}n_iE_i^*\in \calS'\setminus \{0\}$ (i.e. $t=1$ in the previous notation).
 The element $h$ identifies a spin$^c$ structure of the link and a
system of compatible weight functions $w=w_h$. The semigroup element identifies an increasing
filtration $\{\frX_{-d}\}_{d\geq 0}:= \{\cup(l, I) \,:\, (s,l)\leq -d\}$ of $\frX=(\R_{\geq 0})^{|\calv|}$, hence an
increasing filtration $\{S_n\cap \frX_{-d}\}_{d\geq 0}$ of any $S_n$. Hence we get a graded $\Z[U]$--module
$\bH_*(\frX_{-d},w)=\oplus_n H_*( S_n\cap \frX_{-d},\Z)$ for every $d$ and a sequence
 of graded $\Z[U]$--module morphisms
$\bH_*(\frX,w)\leftarrow \bH_*(\frX_{-1},w)\leftarrow \bH_*(\frX_{-2},w)\leftarrow \cdots.$
For each $b$, the map $\bH_b(\frX,w)\leftarrow \bH_b(\frX_{-d}, w)$ induced at lattice homology level is
homogeneous of degree zero.
These morphisms provide the following filtration of  $\Z[U]$--modules in $\bH_*(\frX,w)$
$${\rm F}_{-d}\bH_*(\frX,w):={\rm im}\big( \bH_*(\frX,w)\leftarrow \bH_*(\frX_{-d},w)\, \big).$$
By Theorem \ref{th:eq} $\{F_{-d}\bH_*(\frX,w)\}_{d}$ is independent of the choice of
$(\Gamma_\phi, s)$ in its class $\sim$. 

\subsection{The homological spectral sequence associated with the subspaces $\{S_n\cap \frX_{-d}\}_d$.}\label{ss:ss} \

Let us  fix $n$.
The morphism  $(\bH_*(\frX_{-d}, w))_{-2n}\to (\bH_*(\frX,w))_{-2n}$ is identical with
the morphisms $  H_*(S_n\cap \frX_{-d}, \Z)\to  H_*(S_n,\Z)$ induced by the inclusion
$S_n\cap \frX_{-d}\hookrightarrow S_n$. In particular, one can analyse the spectral sequence
associated with the filtration $\{S_n\cap \frX_{-d}\}_{d\geq 0}$ of subspaces of $S_n$.
Since $s\in\calS'\setminus \{0\}$ is a nontrivial sum of type
$\sum_in_iE^*_i$ ($n_i\in \Z_{\geq 0}$), and the coefficients of any  $E^*_{i}$ are strict positive (and each $S_n$ is compact) we obtain the following fact.
\begin{lemma}The filtration $\{S_n\cap \frX_{-d}\}_{d\geq 0}$ is finite.
That is, for any $n$ there exists $d$ such that $S_n\cap \frX_{-d}=\emptyset$.
\end{lemma}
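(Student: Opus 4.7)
The plan is to prove the lemma by combining the compactness of $S_n$ with the formula $(s,l) = -\sum_i n_i l_i$ for $l \in L_{\geq 0}$.

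First, I would observe that $S_n$ has only finitely many cubes. This follows directly from property (a) of the weight function in \ref{9complex}: the preimage $w_0^{-1}((-\infty,n])$ is finite, and every cube $(l,I)$ appearing in $S_n$ has its base vertex (and all other vertices) in this finite set. In particular, the coordinates of every lattice point $l = \sum_i l_i E_i$ that appears as a vertex of a cube in $S_n$ are bounded: there is a constant $C=C(n)$ with $0 \leq l_i \leq C$ for every such $l$ (using also that we restrict to $\frX = (\R_{\geq 0})^{|\cV|}$).

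Next, I would translate membership in $\frX_{-d}$ into an explicit numerical condition. Writing $s = \sum_i n_i E_i^*$ with $n_i \in \Z_{\geq 0}$ not all zero, and using $(E_i^*, E_j) = -\delta_{ij}$, one obtains, for any $l = \sum_j l_j E_j \in L$,
\[
(s,l) \;=\; \sum_i n_i (E_i^*, l) \;=\; -\sum_i n_i l_i.
\]
Thus a cube $(l,I)$ lies in $\frX_{-d}$ precisely when its base vertex satisfies $\sum_i n_i l_i \geq d$.

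Combining the two steps, for any lattice point $l$ appearing as a vertex of a cube of $S_n$ one has
\[
\sum_i n_i l_i \;\leq\; C \cdot \sum_i n_i \;=:\; D(n).
\]
Therefore if we take $d_0 := D(n) + 1$, no such $l$ can satisfy $(s,l) \leq -d_0$, so no cube of $S_n$ lies in $\frX_{-d_0}$. This gives $S_n \cap \frX_{-d_0} = \emptyset$, proving the lemma. There is no real obstacle here: the whole argument rests on the fact that $s \in \calS' \setminus \{0\}$ has \emph{nonnegative} (not all zero) coefficients $n_i$ in the $\{E_i^*\}$ basis, so $-(s,\cdot)$ is a nonnegative linear functional on $L_{\geq 0}$ which is bounded on the compact set $S_n$.
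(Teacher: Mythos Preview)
Your argument is correct and matches the paper's own justification, which is the single sentence preceding the lemma: $s=\sum_i n_iE_i^*$ with $n_i\in\Z_{\geq 0}$ not all zero, the $E_i^*$ have strictly positive coefficients, and each $S_n$ is compact. You have simply spelled out the details of that sentence via the explicit formula $(s,l)=-\sum_i n_i l_i$ and the finiteness of $w_0^{-1}((-\infty,n])$.
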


This homological  spectral sequence will be denoted by
 $(E^k_{-d,q})_n\Rightarrow (E^\infty_{-d,q})_n$. Its terms $E^1$ and $E^\infty$ are the following:
  \begin{equation}\begin{split}
  (E^1_{-d,q})_n=& H_{-d+q}(S_n\cap \frX_{-d}, S_n\cap \frX_{-d-1},\Z),\\
   (E^\infty_{-d,q})_n=& \frac{(F_{-d}\, \bH_{-d+q}(\frX))_{-2n}}
   { (F_{-d-1}\, \bH_{-d+q}(\frX))_{-2n}}=({\rm Gr}^F_{-d}\, \bH_{-d+q}(\frX)\,)_{-2n}.
   \end{split}\end{equation}

Theorem  \ref{th:eq} and Theorem \ref{th:redth2} imply the following stability statements:

\begin{proposition}\label{prop:stab}
(a)
Each $\Z$--module $(E^k_{-d,q})_{n}$ is well-defined module associated with  $(\Gamma_\phi, s)$ and $h\in H$,  and  it is independent of the choice of $(\Gamma_\phi, s)$ in its $\sim$--class.

(b) If the $E^*$--support ${\rm Supp}(s)$  of $s$ is a subset of an SR--set $\overline{\cV}$,
then for any\, $n\in\Z$\,
the pages $E^k$ ($k\geq 1$) of the homological spectral sequence associated with
$\{S_n(w_h)\cap \frX_{-d}\}_{d} $ agree with the corresponding pages of the homological spectral sequence associated with
$\{ S_n(\overline{w}_h)\cap \overline{\frX}_{-d}\}_d$.
\end{proposition}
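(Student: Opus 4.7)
The plan is to deduce both parts directly from the two ingredients cited just before the statement. In both cases the idea is the same and rests on a single elementary principle: a morphism of finitely filtered spaces that is a homotopy equivalence on every filtration layer and on every consecutive pair induces an isomorphism of the associated homological spectral sequences from the $E^1$ page onward. Once this is in place, the only work is to check compatibility of the relevant map with the two filtrations, so that Theorems \ref{th:eq} and \ref{th:redth2} can be quoted directly.

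For part (a), I would take two representatives $(\Gamma_{\phi_1}, s_1) \sim (\Gamma_{\phi_2}, s_2)$ related by a single non-base-point blow-up $\psi$ with $\psi^*(s_2) = s_1$. The first step is to verify that $(\psi_* \otimes \R)^{-1}(\frX_{-d}(s_2)) = \frX_{-d}(s_1)$; this is immediate from the adjunction identity $(s_2, \psi_*(l)) = (\psi^*(s_2), l) = (s_1, l)$. Thus $\psi_*$ sends the filtration $\{S_{1,n} \cap \frX_{-d}(s_1)\}_d$ into $\{S_{2,n} \cap \frX_{-d}(s_2)\}_d$. Theorem \ref{th:eq}(c)--(d) then supplies the layerwise homotopy equivalences and the corresponding pair homotopy equivalences, and hence an isomorphism on $E^1$ pages (the relative homologies $H_{-d+q}(S_n \cap \frX_{-d}, S_n \cap \frX_{-d-1}, \Z)$) that commutes with the $d^1$ differentials by naturality of the long exact sequences of triples. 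A standard induction on $k$, realizing $E^{k+1}$ as a subquotient of $E^k$, propagates the isomorphism to all pages. Since the equivalence relation $\sim$ is generated by such individual $\psi$'s, part (a) follows.

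For part (b), I would observe that the hypothesis ${\rm Supp}(s) \subset \overline{\cV}$ is precisely what is needed to make the projection $pr : (\R_{\geq 0})^s \to (\R_{\geq 0})^{\bar s}$ compatible with the two filtrations: writing $l = \sum_i l_i E_i$, the scalar $(s, l)$ involves only the coordinates $l_i$ with $i \in \overline{\cV}$, so $(s, l)$ depends only on $pr(l)$. Consequently $pr^{-1}(\overline{\frX}_{-d}) = \frX_{-d}$, and hence $S_n(w_h) \cap \frX_{-d} = pr^{-1}(S_n(\overline{w}_h) \cap \overline{\frX}_{-d})$. The Filtered Reduction Theorem \ref{th:redth2} gives the layerwise homotopy equivalence (as a quasifibration with contractible fibres), and the five-lemma applied to the long exact sequences of the pairs $(S_n \cap \frX_{-d}, S_n \cap \frX_{-d-1})$ and $(S_n(\overline{w}_h) \cap \overline{\frX}_{-d}, S_n(\overline{w}_h) \cap \overline{\frX}_{-d-1})$ upgrades this to an isomorphism on relative homology, i.e. on the $E^1$ page. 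The same propagation argument as in (a) then finishes the identification of $E^k$ for every $k \geq 1$.

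The only real technical content is the bookkeeping of the filtration compatibilities; the homotopy-theoretic heart of the argument is already packaged into Theorems \ref{th:eq} and \ref{th:redth2}, and the transfer from an $E^1$ isomorphism to an $E^k$ isomorphism is purely formal. I would expect no genuine obstacle beyond carefully writing out these identifications and quoting the two theorems in the right form.
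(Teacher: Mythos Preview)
Your proposal is correct and follows exactly the approach the paper takes: the paper simply states that the proposition is implied by Theorems \ref{th:eq} and \ref{th:redth2}, and you have spelled out the standard bookkeeping (filtration compatibility via $\psi^*(s_2)=s_1$ and ${\rm Supp}(s)\subset\overline{\cV}$, then the formal propagation from $E^1$ to $E^k$) that makes this implication explicit. There is nothing to add or correct.
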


 Thus,  for every $1\leq k\leq \infty$, we have the  well-defined Poincar\'e series associated with $(\Gamma_\phi, s)/\sim$:
 $$PE_k(T,Q,\h):=\sum_{d,q,n} \ \rank (E_{-d,q}^k)_{n}\cdot T^dQ^n\h^{-d+q}\in\Z[[P,Q]][Q^{-1},\h].$$
 Note also  that all the coefficients of $PE_k(T,Q,\h)$ are nonnegative.

 The differential $d_{-d,q}^k$ acts as $(E_{-d,q}^k)_{n}\to (E_{-d-k,q+k-1}^k)_{n}$,
 hence $PE_{k+1}$ is obtained from $PE_k$ by deleting terms of type
 $Q^n(T^d\h^{-d+q}+T^{d+k}\h^{-d+q-1})=Q^nT^d\h^{-d+q-1}(\h+T^k)$ ($k\geq 1$).

 We say that two series $P$ and $P'$ satisfies
 $P\geq P'$ if and only if $P-P'$ has all of its coefficients nonnegative. Then the above discussion shows that
 (cf. \cite[page 15]{McCleary})
 $$PE_1(T,Q,\h)\geq PE_2(T,Q,\h)\geq \cdots \geq PE_\infty(T,Q,\h).$$
 If $E^2_{*,*}=E^{\infty}_{*,*}$ then $PE_1-PE_2=(\h+T)R^+$, where all the coefficients of $R^+$ are nonnegative.
 In general,  $(PE_1-PE_{\infty})|_{T=1}=(\h+1)\bar{R}^+$, where all the coefficients of $\bar{R}^+$ are nonnegative.
 Thus,
 \begin{equation}\label{eq:spseq}
  PE_1(T,Q,\h)|_{T=1,\h=-1}= PE_2(T,Q,\h)|_{T=1,\h=-1}= \cdots = PE_\infty(T,Q,\h)|_{T=1,\h=-1}.\end{equation}
Since the whole  spectral sequence is an invariant of $(\Gamma_\phi,s)$ and $h\in H$,
$$k_{(\Gamma_\phi,s)}(n):=\min\{k\,:\, (E_{*,*}^k)_n=(E_{*,*}^\infty)_n\,\} \ \mbox{and } \ \ \
k_{(\Gamma_\phi,s)}=\max_n \,k_{(\Gamma_\phi,s)}(n)$$ are  invariants of $(\Gamma_\phi,s)$ and $h\in H$ too.

\subsection{The $U$--action along the spectral sequence}\label{ss:U}

For any fixed $n$, the natural inclusion $S_n\hookrightarrow S_{n+1}$
is compatible with the filtration, hence the inclusion
$\{S_n\cap \frX_{-d}\}_d\hookrightarrow \{S_{n+1}\cap \frX_{-d}\}_d$
at the level of filtered spaces  induces a morphism of spectral sequences
$(E^k_{-d.q})_n\to (E^k_{-d,q})_{n+1}$ compatibly  with the differentials
$(d^k_{-d.q})_n$ and $(d^k_{-d,q})_{n+1}$.

These morphisms will also be denoted by $U$.

\begin{lemma}\label{lem:torsion}
For any fixed $n$ there exists $\delta(n)\in \Z_{>0}$ such that for any $(d,q)$ the morphisms
$$U^{\delta(n)}\,:\, (E^1_{-d.q})_n\to (E^1_{-d,q})_{n+\delta(n)} \ \ \mbox{is trivial}. $$
In particular, $U^{\delta(n)}$ at the level of
$(E^k_{-d.q})_n\to (E^k_{-d,q})_{n+\delta(n)}$ ($k\geq 1$) is trivial too.
In this way, for any $k\geq 1$, the triple graded $\Z$--module $(E^k_{*.*})_*$  has a torsion
$\Z[U]$--module structure.
\end{lemma}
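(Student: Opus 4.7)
\textit{Plan of proof.} The strategy is to describe $E^1$ at the chain level, identify its direct limit under $U$ as the relative chain complex of a pair of contractible spaces, and exploit finite generation at level $n$ to extract a uniform exponent $\delta(n)$.

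First I would unwind the relative cellular chain complex. Because $s\in\calS'\setminus\{0\}$ satisfies $(s,E_i)\leq 0$ for every $i$, the inequality $(s,l)\leq -d$ is automatically inherited by every vertex $l+E_J$ of the cube $(l,I)$, so the quotient complex
\[
D_*^{(n,d)}\ :=\ C_*\bigl(S_n\cap \frX_{-d},\, S_n\cap \frX_{-d-1}\bigr)
\]
is freely generated over $\Z$ by those cubes $(l,I)$ with $(s,l)=-d$ and $w_h((l,I))\le n$. Thus $(E^1_{-d,q})_n=H_{-d+q}(D_*^{(n,d)})$, and the $U$-action corresponds to the inclusion of chain complexes $D_*^{(n,d)}\hookrightarrow D_*^{(n+1,d)}$. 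The direct limit $D_*^{(\infty,d)}:=\varinjlim_n D_*^{(n,d)}$ is the full relative CW chain complex $C_*(\frX_{-d},\frX_{-d-1})$.

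Next I would prove $H_*(\frX_{-d},\frX_{-d-1})=0$. Choosing $i^*\in{\rm Supp}(s)$ and an integer $M$ with $M\cdot|(s,E_{i^*})|\geq d$, the map
\[
H(x,t)\ =\ x\,+\,t\cdot\max(0,M-x_{i^*})\cdot E_{i^*}
\]
defines a deformation retraction of $\frX_{-d}$ onto the convex set $A:=\{x\in\frX:x_{i^*}\geq M\}\subset\frX_{-d}$, since translating by a nonnegative multiple of $E_{i^*}$ only makes $(s,\cdot)$ more negative and therefore keeps the path inside $\frX_{-d}$. Hence $\frX_{-d}$, and analogously $\frX_{-d-1}$, is contractible; the long exact sequence of the pair then forces acyclicity. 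Since homology commutes with filtered colimits, $\varinjlim_n(E^1_{-d,q})_n=0$.

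By the finiteness property (a) of the weight system, $S_n$ is a finite CW-complex, so $\bigoplus_{d,q}(E^1_{-d,q})_n$ is a finitely generated $\Z$-module with only finitely many nontrivial summands. A finite set of generators is killed in the vanishing direct limit by some uniform power of $U$, which furnishes $\delta(n)$. Transfer to higher pages is formal: $U$ commutes with every differential $d^r$ (both arise from a map of filtered chain complexes), and $(E^k_{-d,q})_n$ is a subquotient of $(E^1_{-d,q})_n$, so $U^{\delta(n)}$ factors through the vanishing map on $E^1$ and thus vanishes on every $E^k$ for $k\geq 1$. The resulting $\Z[U]$-action on $(E^k_{*,*})_*$ is consequently torsion. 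The substantive step is the contractibility of $\frX_{-d}$: as a cubical subcomplex it is \emph{not} convex (the staircase boundary cut out by the integer lattice bulges inward relative to the half-space $\{(s,x)\leq -d\}\cap\frX$), so I expect this verification of the deformation retraction to be the main point requiring care. An alternative avoiding point-set topology would be to exhibit an explicit acyclic matching $(l,I)\leftrightarrow(l,I\cup\{i_0\})$ on $D_*^{(\infty,d)}$ for fixed $i_0\in{\rm Supp}(s)$ and to deduce contractibility of the complex via discrete Morse theory, after a short case analysis accounting for the extra face terms $(l+E_i,I\setminus\{i\})$ with $i\notin{\rm Supp}(s)$ that survive in the quotient differential.
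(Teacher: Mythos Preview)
Your proof is correct but takes a different route from the paper. The paper's argument is essentially a one-liner: fix $i$ (implicitly in ${\rm Supp}(s)$) and choose $\delta(n)$, by compactness of $S_n$, so that $\chi_h(l+E_i)\leq n+\delta(n)$ for every lattice point $l\in S_n$; then the translation $l\mapsto l+E_i$ sends the pair $(S_n\cap\frX_{-d},\,S_n\cap\frX_{-d-1})$ into $(S_{n+\delta(n)}\cap\frX_{-d-1},\,S_{n+\delta(n)}\cap\frX_{-d-2})$, and composing with the inclusion $j$ of this pair into $(S_{n+\delta(n)}\cap\frX_{-d},\,S_{n+\delta(n)}\cap\frX_{-d-1})$ yields a map homotopic to $U^{\delta(n)}$ which is zero on relative homology since $j_*=0$ (its source already lies in the target subspace). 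Your direct-limit argument is more conceptual and would adapt to any filtration whose pieces are contractible, at the price of a longer verification. Incidentally, your worry about the cubical shape of $\frX_{-d}$ is unnecessary: the retraction $H(x,t)=x+t\max(0,M-x_{i^*})E_{i^*}$ does remain in $\frX_{-d}$, because if $x$ lies in a cube $(l,I)$ with $(s,l)\leq -d$ then $x+cE_{i^*}$ (for $c\geq 0$) lies in some cube $(l',I')$ with $l'$ differing from $l$ only by an increase in the $i^*$-coordinate, whence $(s,l')\leq(s,l)\leq -d$ as $(s,E_{i^*})<0$; no discrete Morse theory is needed.
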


\begin{proof}
Fix a base element $E_i$ of $L$, and chose $\delta(n)$ so that for any lattice point $l\in S_n$
we also have $\chi_h(l+E_i)\leq n+\delta(n)$. This is possible due to the compactness of $S_n$.
Then we have the following commutative diagram

\begin{picture}(320,50)(0,5)

\put(100,40){\makebox(0,0){\small{$ (S_n\cap \frX_{-d}, S_n\cap \frX_{-d-1}) $}}}
\put(320,40){\makebox(0,0){\small{$(S_{n+\delta(n)}\cap \frX_{-d-1}, S_{n+\delta(n)}\cap \frX_{-d-2})$}}}
\put(155,40){\vector(1,0){80}}
\put(200,45){\makebox(0,0){\small{$l\mapsto l+E_i$}}}
\put(200,10){\makebox(0,0){\small{$(S_{n+\delta(n)}\cap \frX_{-d}, S_{n+\delta(n)}\cap \frX_{-d-1})$}}}
\put(115,30){\vector(2,-1){25}}
\put(285,30){\vector(-2,-1){25}}
\put(115,20){\makebox(0,0){\small{$u_{\delta(n)}$}}}
\put(285,20){\makebox(0,0){\small{$j$}}}

\end{picture}

\noindent where $j$ is the inclusion and $u_{\delta(n)}$ is the composition of the inclusions
$(S_{n+i}\cap \frX_{-d}, S_{n+i}\cap \frX_{-d-1})\hookrightarrow
(S_{n+i+1}\cap \frX_{-d}, S_{n+i+1}\cap \frX_{-d-1})$, $0\leq i <\delta(n)$.
Then
 note that $j$ at homological level induces the trivial morphism.
\end{proof}

\subsection{$\bH_*$ and the spectral sequence as decoration of the vertices of the graded root}\label{ss:decgrroot}\

The construction of the lattice homology, its filtration,  and the corresponding spectra sequences
 have an additional subtlety. Recall that the starting point in all these constructions
 is the tower of spaces $\{S_n\}_{n\geq m_w}$. Now, each $S_n$ is  the disjoint union of its connected components $\sqcup_v S_n^v$. They are indexed by the vertices $\cV(\mathfrak{R}_h)$ of the graded root
$\mathfrak{R}_h$ with $w(v)=n$, cf. \ref{bek:grroot}. On the other hand, for each $n$, the filtered spaces
$\{S_n\cap \frX_{-d}\}_d$ also decomposes into a disjoint union
$\sqcup_{v\in\cV(\mathfrak{R}), \ w(v)=n}\{S_n^v\cap \frX_{-d}\}_d$. In particular, all the spectral sequence (and all its outputs) split according to this decomposition indexed by $\cV(\mathfrak{R}_h)$. That is, all the invariants which originally were graded by $\{n\in\Z\,:\, n\geq m_w\}$ will have a refined grading given by the
vertices of the graded root.

In particular, the graded root $\mathfrak{R}_h$ supports 
the following invariants partitioned as decorations of the  vertices $\cV(\mathfrak{R}_h)$:
$\bH_*$, $(E^k_{-d.q})_n$, $PE_k(T,Q,\h)$, ${\bf PE}_1({\bf T},Q,\h)$ (this latter one will be defined in
\ref{ss:PET}).

Note that $U\,:\, (E^1_{-d.q})_n\to (E^1_{-d,q})_{n+1}$  defined in subsection \ref{ss:U}
decomposes also in direct sum according to the edges of $\mathfrak{R}_h$ connecting vertices of weight
$n$ and $n+1$ respectively, corresponding to the inclusion $S_n^v\hookrightarrow S_{n+1}^u$  of the
connected components. Hence the new grading given by the vertices of the root is compatible
with the two $U$ actions (of the module, e.g. of $\bH_*$, and of the root).

For another case in the literature when an important series
 appears as decoration of a graded root  see \cite{AJK}.

\subsection{Examples}\label{ss:examples}
\begin{example}\label{ex:p} {\bf ($\mathbf{|\calv|=1}$)} \
Assume that $\Gamma$ consists of one vertex with Euler decoration
$-p$, $p\geq 1$. Then $H=L'/L=\Z/p\Z$, which will be identified with the classes $[a]\in\Z/p\Z$, $0\leq a<p$. Then for any $[a]$ its unique minimal lift $s_{[a]}\in\calS'$ is $aE^*=(a/p)E$, hence $k_h=
(2+2a-p)/p\cdot E\in L'$. Moreover, $w_h(l):=\chi_{h}(lE)=\frac{1}{2}l(l-1)p+l(1+a)$
for any $l\in \Z_{\geq 0}$.

Since $\Gamma$ is rational $\bH_0(\frX, w_h)=\Z[U]=\calt^-_0$ as a $\Z[U]$--module and $\bH_{>0}(\frX, w_h)=0$  \cite{NOSz,Nkonyv}.

We set $s:= E^*=E/p$. Then  $F_{-d}\bH_0(\frX,w_h)=\calt^-_{-2w_h(d)}$; it is embedded naturally
(via the  degree zero homogeneous morphism) as $\Z\langle U^{w_h(d)}, U^{w_h(d)+1}, \ldots \rangle \hookrightarrow \Z[U]$.

Thus,  $H_0(S_n\cap \frX_{-l}, S_n\cap \frX_{-l-1})=\Z$ if and only if $w_h(l)\leq n$ but $w_h(l+1)> n$.
Therefore,
\begin{equation}\label{eq:pep}\begin{split}
PE_1(T,Q,\h)=PE_1(T,Q)&=\sum_{l\geq 0}\, T^l(Q^{w_{h}(lE)}+Q^{w_{h}(lE)+1}+\cdots +
Q^{w_{h}((l+1)E)-1})\\
&=\frac{1}{1-Q}\Big( 1+(T-1)\cdot \sum_{l\geq 1}\, Q^{\frac{1}{2} l(l-1)p+l(1+a)}\, T^{l-1}\,\Big).
\end{split}\end{equation}
The last sum is equivalent with a Jacobi theta function.

Since the weight of the 1--cube $[l,l+1]$ is $w_h((l+1)E)$, we also get the (slightly surprising)
 identity connecting $PE_1$ with the `motivic' Poincar\'e series  $$ PE_1(T,Q)=T^{-a/p}\cdot Z^m_{[a]}(T,Q).$$
This identity will be proved for the most general situation  in Corollary \ref{cor:EP}.

Regarding the spectral sequences,
since $(E^1_{-d,q})_n=0$ for $-d+q\not=0$, the spectral sequence for any $n$ degenerates at $E^1$ level, $(E^1_{-d,q})_n=(E^\infty_{-d,q})_n$ and
$PE_1=PE_\infty$.

By the above discussion, as a $\Z[U]$--module, $(E^k_{*,*})_*=\oplus_{l\geq 0} \, \calt_{-2w_h(l)}\,(w_h(l+1)-w_h(l))$.

Compared with $\bH_0=\Z[U]$, $(E^k_{*,*})_*$ contains essentially more information.
In fact, it contains all the information about the set of values
$\{\chi_h(lE)\}_{l\geq 0}$.
Since $\chi_h(lE)=\chi(l+\frac{a}{p})-\chi(\frac{a}{p})$, this is equivalent (up to a shift)
with the set of $\chi$--values
$\{\chi(l+\frac{a}{p}))\}_{l\geq 0}=\chi(\calS'_h)$, where $\{l+\frac{a}{p}\}_{\l\geq 0}=\calS'_h=\calS'\cap \{l'\in L'\,:\, [l']=h\}$ is the set of semigroup elements in the corresponding $h$--class.

\end{example}

\begin{example}\label{ex:p2} {\bf (Behaviour under blow up)} \
Consider the case of Example \ref{ex:p} with $p=2$ and $h=0$. Then $w(lE)=l^2$ and the filtration given by $s=E^*$ is induced by $\frX_{-d}=\{lE\,:\, l\geq d\}$.

Let us blow up a generic point of $E$ (that is, if we fix a transversal cut  $\widetilde{C}$
of $E$ corresponding to $s$, then the center of the blow up is not $\widetilde{C}\cap E$).
Let the strict transform of $E$ be $E_1$ (the $(-3)$ curve), and let the newly created exceptional curve
be $E_2$. Set $s_1:=\psi^*(s)=E_1^*$. Then one can concretely verify the statement of the
stablity Theorem \ref{th:eq}, namely the homotopy equivalence of the spaces $S_n\cap \frX_{-d}$ considered at the original level and after blow up.

This can be seen via reduction theorem as well applied to the second case (the situation
after blow up). We take $\overline{\cV}=\{E_1\}$ and we write $\overline{l}=l_1E_1$, then $x(\overline{l})=l_1(E_1+E_2)$ and $\overline{w}(\overline{l})=\chi(x(\overline{l}))=l_1^2$
with filtration $\frX_{-d}=\{l_1\geq d\}$. Hence we recover perfectly the situation before the blow up. This is compatible with the Filtration Reduction Theorem \ref{th:redth2} regarding the class
$(M, L_C,0)$

Let us consider again the original situation of the $(-2)$ resolution graph together with its arrow,
and let us blow up the `base point' $\widetilde{C}\cap E$. In this way we create the class
$(M, L_C, 1)$.  Let $E_1$ and $E_2$ be again the
$(-3)$ and $(-1)$ exceptional curves respectively. Note that in this case the strict transform of
$\widetilde{C}$ will intersect $E_2$, hence $s_2=E_2^*$. By a verification one sees that the
homotopy type of the
 spaces  $\{S_n\cap \frX_{-d}\}$ is modified drastically. E.g., via the Filtered Reduction Theorem applied for this blown up situation with $\overline{\cV}=\{E_2\}$ we have to analyse the rank one lattice elements $\overline{l}=l_2E_2$. One shows that $x(\overline{l})=l_2E_2+l_1E_1$, where
$l_1=\lceil l_2/3\rceil$, $\overline{w}(\overline{l})=(3l_1^2+l_2^2-2l_1l_2-l_1+l_2)/2$,
$\frX_{-d}=\{l_2\geq d\}$. This is very different than  the original weight sequence $l\mapsto l^2$:
the first entries of $l_2\mapsto \overline{w}(\overline{l})$ are 0, 1,2,4,7,10, 14,... \
compared with
0,1,4,9,.....
That is, the $d$--degrees of $H_0(S_n,\Z)=\Z$ in the unique nontrivial
${\rm Gr}_{-d}^F(\bH_*)_{-2n}$ are different.
(If $n$ satisfies $\overline{w}(\overline{l})\leq n< \overline{w}(\overline{l}+1)$ then
$d=\overline{l}$.)

This also shows that we cannot expect stability of the spectral sequence (and its outputs) along a base point blow up (i.e. by modifying the decorations of the curve components).
\end{example}

\bekezdes {\bf (Examples regarding the degeneration index $k_{(\Gamma_\phi,s)}$.)} \
In general, for an arbitrary  $(\Gamma_\phi,s)$,
 the spectral sequence does not degenerate at $E^1$ level.
Usually  it is difficult to determine the invariant $k_{(\Gamma_\phi, s)}(n)$. Moreover,
$k_{(\Gamma_\phi,s)}(n)$
depends essentially on the choice of the semigroup element $s$ too.

The next examples target  these facts.

\begin{example}\label{ex:latNV1}\ (See \cite{Nlattice,Nkonyv} for the non-filtered discussion.)
Consider the following graph:

\vspace{1mm}

\begin{picture}(300,45)(60,0)
\put(125,25){\circle*{4}} \put(150,25){\circle*{4}}
\put(175,25){\circle*{4}} \put(200,25){\circle*{4}}
\put(225,25){\circle*{4}} \put(150,5){\circle*{4}}
\put(200,5){\circle*{4}} \put(125,25){\line(1,0){100}}
\put(150,25){\line(0,-1){20}} \put(200,25){\line(0,-1){20}}
\put(125,35){\makebox(0,0){$-2$}}
\put(150,35){\makebox(0,0){$-1$}}
\put(175,35){\makebox(0,0){$-13$}}
\put(200,35){\makebox(0,0){$-1$}}
\put(225,35){\makebox(0,0){$-2$}} \put(160,5){\makebox(0,0){$-3$}}
\put(210,5){\makebox(0,0){$-3$}}

\put(275,25){$E_1$} \put(300,25){$E_2$} \put(325,25){$E_0$}
\put(350,25){$E_2'$} \put(375,25){$E_1'$} \put(300,5){$E_3$}
\put(350,5){$E_3'$}
\end{picture}

\vspace{1mm}

On the right hand side we give names to the vertices and base elements of $L$. Set
$\chi$ associated with $h=0$.  We write any $l\in L$ in the
form $l=l_x+ zE_0+ l_y$, where $l_x=\sum x_iE_i$, $l_y=\sum
y_iE_i'$, $x_i,\ y_i,\ z\in \Z$ ($i=1,2,3$); or
$(x_1,x_2,x_3;z;y_1,y_2,y_3)$. Then $Z_K=(7,14,5;3;7,14,5)$.
Moreover, $\min\chi=-1$.

 $S_{-1}$ consists of two connected components, both contractible,
$S_0$ has three components, two contractible ones and one with the homotopy type of the circle  $S^1$,
 and $S_n$ is contractible for $n\geq 1$. Analysing the inclusions (see \cite{Nlattice} or below)
 $\bH_0=\calt_{2}^- \oplus \calt_2(1)\oplus \calt_0(1)^2$ and
 $\bH_1=\calt_0(1)$.

In our discussions it is helpful to use the symmetry $\chi(l)=\chi(Z_K-l)$.
Let us introduce the  triplets
$$B:=\{(6,12,4), (6,11,4), (5,11,4),(5,10,4),(5,10,3),(5,9,3),(4,9,3),(4,8,3)\} $$
and triplets of type $A=(7,14,5)-B$.
Then  $S_{-1}$ consists of two contractible components, one of them has lattice points
of type
$(A,1,A)=\{(x,1,y): x,y\in A\}$ while the other one consists of points of type $(B,2,B)$.
They are symmetric with respect to $Z_K/2$.
Next, consider the  triplets
$B':=
\{(7,14,5),(7,13,5),$ $ (6,13,5),(6,12,5),
(7,13,4),(6,13,4),
(7,12,4),(5,12,4), (6,10,4),(4,10,4),$ $(5,9,4),(4,9,4)\}$
 and
the
triplets of type $B''=(10,20,7)-B'$. Set $\tilde{B}:=B\cup B'\cup
B''$, and $\tilde{A}=(7,14,5)-\tilde{B}$. Then the points of type
$X:=(A,1,\tilde{A})\cup (\tilde{A},1,A)\cup (B,2,\tilde{B})\cup
(\tilde{B},2,B)$ are in $S_0$. Since  $\tilde{A}\cap
B$ is not empty, all the points from $X$ can be connected by
1--cubes of $X$. In fact, $S_0$ has three connected components, one of
them contains the zero cycle, the other one contains $Z_K$, and the
third one, $S_0^v$, consists of  all the points of $X$.
Finally, notice that the two intersection points $P=\tilde{A}\cap
B=(4,8,3)$ and $Q=A\cap \tilde{B}=(3,6,2)$ create a loop in
$S^v_0$. Indeed, half of the loop consists of a connecting path of $(P;1;Q)$ and
$(Q;1;P)$ through points in $X$ with $z=1$, the other half
connects  $(P;2;Q)$ with $(Q;2;P)$ through points in $X$ with
$z=2$. This loop can be contracted only in $S_1$.

{\bf Case 1.}
We choose $s=\sum_{i\in\cV}E^*_i$, i.e., $d(l)=-(s,l)=\sum_il_i$ and we focus on the component
$S_0^v$.

By the above
discussion $S^v_0\cap \frX_{-51}=\emptyset$, and
$S^v_0\cap \frX_{-50}$ consists of two points $p_1=(7,14,5;2;6,12,4)$ and
$p_2=(6,12,4;2;7,14,5)$. They can be connected by a path in $S^v_0$, however,
the smallest subspace where such a path can be constructed is $\frX_{-46}$.
(This is the path connecting first  $p_1$ with $p_0=(6,12,4;2;6,12,4)$ by a decreasing path in the first three coordinates
--- via the first members of $B'$ --- then connecting $p_0$ with $p_2$ via a symmetric path.)
In particular, $(E^4_{-50, 50})_0=(E^1_{-50, 50})_0=H_0(S^v_0\cap\frX_{-50}, S^v_0\cap\frX_{-51})$ is $\Z^2$ generated by $p_1$ and $p_2$, and the class of $p_1-p_2$ is in the image of
$d^4_{-46,47}:(E^4_{-46,47})_0\to (E^4_{-50,50})_0$. (Using the general notations of spectral sequences, this means that
$p_1-p_2\in B^5_{-50,50}$.)

In particular $(E^k_{-50, 50})_0$ stabilizes for $k\geq 5$ (but not earlier) and  $k_{(\Gamma, s)}(0)\geq 5$.

Here the following warning is appropriate. Assume that we fix a set of bad vertices $\overline{\cV}$ and we wish to
use the reduction theorem associated with $\overline{\cV}$. The point is that if
 we wish to determine  the homotopy type of a space $S_n$ then we can run the reduction theorem
without any obstruction. However,  if we wish to compute the homotopy types of
the filtered subspaces $\{S_n\cap \frX_{-d}\}_d$ (filtered by a semigroup element $s$),
then we can run the reduction theorem only if the $E^*$ support ${\rm Supp}(s)$ of $s$ is included in $\overline {\cV}$;
see the Filtered Reduction Theorem \ref{th:redth2}.

In particular, the results of this Case 1 cannot be recomputed via  a reduction theorem
with $\overline{\cV}\subsetneq \cV$.

This example also shows rather suggestively that if the Filtered Reduction Theorem cannot
be applied, or if $|\calv|$ is still large, then the structure of the spaces $S_n$ and their filtrations can be rather technical, arithmetical, hardly manageable. This behaviour emphasizes the role and the power of the reduction theorems as well.

\vspace{1mm}

{\bf Case 2.} Let us consider now $s=E_2^*+(E_2')^*$. Then $d=-(s,l)$ is the sum of the second and the sixth coordinates,
hence (using the notations $p_0, \, p_1, \, p_2$ from Case 1)
$d(p_1)=d(p_2)=26$, $d(p_0)=24$. Hence $S^v_0\cap \frX_{27}=\emptyset$, $(E^1_{-26, 26})_0=H_0(S^v_0\cap\frX_{-26}, S^v_0\cap\frX_{-27})$ is $\Z^2=\Z\langle p_1,p_2\rangle$, and $p_1-p_2$ is killed by the image of $d^2_{-24, 25}$.
Hence  $(E^k_{-26, 26})_0$ stabilizes for $k\geq 3$.

Note that the nodes $E_2 $ and $E_2'$ constitute an SR--set, hence we can apply the Reduction Theorem for them. Furthermore,
the $E^*$--support of $s=E_2^*+(E_2')^*$ is contained in this set. Hence the spectral sequence can be read
from the reduces situation as well, cf. Proposition \ref{prop:stab}.

In this case $S^v_0$ is included in the rectangle $R((0,0), (14,14))$ (the projected $R(0, Z_K)$). The weight function $\overline{w}$
in this rectangle is shown in the next diagram, from which the very same statement can be read.
The projections of the
generators $p_1$ and $p_2$ and  the connecting path are highlighted by
boldface characters.

\begin{picture}(320,160)(-50,-10)

\put(5,-5){\makebox(0,0){\small{$0$}}}
\put(20,-5){\makebox(0,0){\small{$1$}}}
\put(35,-5){\makebox(0,0){\small{$0$}}}
\put(50,-5){\makebox(0,0){\small{$0$}}}
\put(65,-5){\makebox(0,0){\small{$0$}}}
\put(80,-5){\makebox(0,0){\small{$0$}}}
\put(95,-5){\makebox(0,0){\small{$0$}}}
\put(110,-5){\makebox(0,0){\small{$1$}}}
\put(125,-5){\makebox(0,0){\small{$1$}}}
\put(140,-5){\makebox(0,0){\small{$2$}}}
\put(155,-5){\makebox(0,0){\small{$3$}}}
\put(170,-5){\makebox(0,0){\small{$4$}}}
\put(185,-5){\makebox(0,0){\small{$5$}}}
\put(200,-5){\makebox(0,0){\small{$7$}}}
\put(215,-5){\makebox(0,0){\small{$8$}}}

\put(5,5){\makebox(0,0){\small{$1$}}}
\put(20,5){\makebox(0,0){\small{$1$}}}
\put(35,5){\makebox(0,0){\small{$0$}}}
\put(50,5){\makebox(0,0){\small{$0$}}}
\put(65,5){\makebox(0,0){\small{$0$}}}
\put(80,5){\makebox(0,0){\small{$0$}}}
\put(95,5){\makebox(0,0){\small{$0$}}}
\put(110,5){\makebox(0,0){\small{$1$}}}
\put(125,5){\makebox(0,0){\small{$1$}}}
\put(140,5){\makebox(0,0){\small{$2$}}}
\put(155,5){\makebox(0,0){\small{$3$}}}
\put(170,5){\makebox(0,0){\small{$4$}}}
\put(185,5){\makebox(0,0){\small{$5$}}}
\put(200,5){\makebox(0,0){\small{$7$}}}
\put(215,5){\makebox(0,0){\small{$7$}}}

\put(5,15){\makebox(0,0){\small{$0$}}}
\put(20,15){\makebox(0,0){\small{$0$}}}
\put(35,15){\makebox(0,0){\small{$-1$}}}
\put(50,15){\makebox(0,0){\small{$-1$}}}
\put(65,15){\makebox(0,0){\small{$-1$}}}
\put(80,15){\makebox(0,0){\small{$-1$}}}
\put(95,15){\makebox(0,0){\small{$-1$}}}
\put(110,15){\makebox(0,0){\small{$0$}}}
\put(125,15){\makebox(0,0){\small{$0$}}}
\put(140,15){\makebox(0,0){\small{$1$}}}
\put(155,15){\makebox(0,0){\small{$2$}}}
\put(170,15){\makebox(0,0){\small{$3$}}}
\put(185,15){\makebox(0,0){\small{$4$}}}
\put(200,15){\makebox(0,0){\small{$5$}}}
\put(215,15){\makebox(0,0){\small{$5$}}}

\put(5,25){\makebox(0,0){\small{$0$}}}
\put(20,25){\makebox(0,0){\small{$0$}}}
\put(35,25){\makebox(0,0){\small{$-1$}}}
\put(50,25){\makebox(0,0){\small{$-1$}}}
\put(65,25){\makebox(0,0){\small{$-1$}}}
\put(80,25){\makebox(0,0){\small{$-1$}}}
\put(95,25){\makebox(0,0){\small{$-1$}}}
\put(110,25){\makebox(0,0){\small{$0$}}}
\put(125,25){\makebox(0,0){\small{$0$}}}
\put(140,25){\makebox(0,0){\small{$1$}}}
\put(155,25){\makebox(0,0){\small{$2$}}}
\put(170,25){\makebox(0,0){\small{$3$}}}
\put(185,25){\makebox(0,0){\small{$3$}}}
\put(200,25){\makebox(0,0){\small{$4$}}}
\put(215,25){\makebox(0,0){\small{$4$}}}

\put(5,35){\makebox(0,0){\small{$0$}}}
\put(20,35){\makebox(0,0){\small{$0$}}}
\put(35,35){\makebox(0,0){\small{$-1$}}}
\put(50,35){\makebox(0,0){\small{$-1$}}}
\put(65,35){\makebox(0,0){\small{$-1$}}}
\put(80,35){\makebox(0,0){\small{$-1$}}}
\put(95,35){\makebox(0,0){\small{$-1$}}}
\put(110,35){\makebox(0,0){\small{$0$}}}
\put(125,35){\makebox(0,0){\small{$0$}}}
\put(140,35){\makebox(0,0){\small{$1$}}}
\put(155,35){\makebox(0,0){\small{$2$}}}
\put(170,35){\makebox(0,0){\small{$2$}}}
\put(185,35){\makebox(0,0){\small{$2$}}}
\put(200,35){\makebox(0,0){\small{$3$}}}
\put(215,35){\makebox(0,0){\small{$3$}}}

\put(5,45){\makebox(0,0){\small{$0$}}}
\put(20,45){\makebox(0,0){\small{$0$}}}
\put(35,45){\makebox(0,0){\small{$-1$}}}
\put(50,45){\makebox(0,0){\small{$-1$}}}
\put(65,45){\makebox(0,0){\small{$-1$}}}
\put(80,45){\makebox(0,0){\small{$-1$}}}
\put(95,45){\makebox(0,0){\small{$-1$}}}
\put(110,45){\makebox(0,0){\small{$0$}}}
\put(125,45){\makebox(0,0){\small{$0$}}}
\put(140,45){\makebox(0,0){\small{$1$}}}
\put(155,45){\makebox(0,0){\small{$1$}}}
\put(170,45){\makebox(0,0){\small{$1$}}}
\put(185,45){\makebox(0,0){\small{$1$}}}
\put(200,45){\makebox(0,0){\small{$2$}}}
\put(215,45){\makebox(0,0){\small{$2$}}}

\put(5,55){\makebox(0,0){\small{$0$}}}
\put(20,55){\makebox(0,0){\small{$0$}}}
\put(35,55){\makebox(0,0){\small{$-1$}}}
\put(50,55){\makebox(0,0){\small{$-1$}}}
\put(65,55){\makebox(0,0){\small{$-1$}}}
\put(80,55){\makebox(0,0){\small{$-1$}}}
\put(95,55){\makebox(0,0){\small{$-1$}}}
\put(110,55){\makebox(0,0){\small{$0$}}}
\put(125,55){\makebox(0,0){\small{$0$}}}
\put(140,55){\makebox(0,0){\small{$0$}}}
\put(155,55){\makebox(0,0){\small{$0$}}}
\put(170,55){\makebox(0,0){\small{$0$}}}
\put(185,55){\makebox(0,0){\small{$0$}}}
\put(200,55){\makebox(0,0){\small{$1$}}}
\put(215,55){\makebox(0,0){\small{$1$}}}

\put(5,65){\makebox(0,0){\small{$1$}}}
\put(20,65){\makebox(0,0){\small{$1$}}}
\put(35,65){\makebox(0,0){\small{$0$}}}
\put(50,65){\makebox(0,0){\small{$0$}}}
\put(65,65){\makebox(0,0){\small{$0$}}}
\put(80,65){\makebox(0,0){\small{$0$}}}
\put(95,65){\makebox(0,0){\small{$0$}}}
\put(110,65){\makebox(0,0){\small{$1$}}}
\put(125,65){\makebox(0,0){\small{$0$}}}
\put(140,65){\makebox(0,0){\small{$0$}}}
\put(155,65){\makebox(0,0){\small{$0$}}}
\put(170,65){\makebox(0,0){\small{$0$}}}
\put(185,65){\makebox(0,0){\small{$0$}}}
\put(200,65){\makebox(0,0){\small{$1$}}}
\put(215,65){\makebox(0,0){\small{$1$}}}

\put(5,75){\makebox(0,0){\small{$1$}}}
\put(20,75){\makebox(0,0){\small{$1$}}}
\put(35,75){\makebox(0,0){\small{$0$}}}
\put(50,75){\makebox(0,0){\small{$0$}}}
\put(65,75){\makebox(0,0){\small{$0$}}}
\put(80,75){\makebox(0,0){\small{$0$}}}
\put(95,75){\makebox(0,0){\small{$0$}}}
\put(110,75){\makebox(0,0){\small{$0$}}}
\put(125,75){\makebox(0,0){\small{$-1$}}}
\put(140,75){\makebox(0,0){\small{$-1$}}}
\put(155,75){\makebox(0,0){\small{$-1$}}}
\put(170,75){\makebox(0,0){\small{$-1$}}}
\put(185,75){\makebox(0,0){\small{$-1$}}}
\put(200,75){\makebox(0,0){\small{$0$}}}
\put(215,75){\makebox(0,0){\small{$0$}}}

\put(5,85){\makebox(0,0){\small{$2$}}}
\put(20,85){\makebox(0,0){\small{$2$}}}
\put(35,85){\makebox(0,0){\small{$1$}}}
\put(50,85){\makebox(0,0){\small{$1$}}}
\put(65,85){\makebox(0,0){\small{$1$}}}
\put(80,85){\makebox(0,0){\small{$1$}}}
\put(95,85){\makebox(0,0){\small{$0$}}}
\put(110,85){\makebox(0,0){\small{$0$}}}
\put(125,85){\makebox(0,0){\small{$-1$}}}
\put(140,85){\makebox(0,0){\small{$-1$}}}
\put(155,85){\makebox(0,0){\small{$-1$}}}
\put(170,85){\makebox(0,0){\small{$-1$}}}
\put(185,85){\makebox(0,0){\small{$-1$}}}
\put(200,85){\makebox(0,0){\small{$0$}}}
\put(215,85){\makebox(0,0){\small{$0$}}}

\put(5,95){\makebox(0,0){\small{$3$}}}
\put(20,95){\makebox(0,0){\small{$3$}}}
\put(35,95){\makebox(0,0){\small{$2$}}}
\put(50,95){\makebox(0,0){\small{$2$}}}
\put(65,95){\makebox(0,0){\small{$2$}}}
\put(80,95){\makebox(0,0){\small{$1$}}}
\put(95,95){\makebox(0,0){\small{$0$}}}
\put(110,95){\makebox(0,0){\small{$0$}}}
\put(125,95){\makebox(0,0){\small{$-1$}}}
\put(140,95){\makebox(0,0){\small{$-1$}}}
\put(155,95){\makebox(0,0){\small{$-1$}}}
\put(170,95){\makebox(0,0){\small{$-1$}}}
\put(185,95){\makebox(0,0){\small{$-1$}}}
\put(200,95){\makebox(0,0){\small{$0$}}}
\put(215,95){\makebox(0,0){\small{$0$}}}

\put(5,105){\makebox(0,0){\small{$4$}}}
\put(20,105){\makebox(0,0){\small{$4$}}}
\put(35,105){\makebox(0,0){\small{$3$}}}
\put(50,105){\makebox(0,0){\small{$3$}}}
\put(65,105){\makebox(0,0){\small{$2$}}}
\put(80,105){\makebox(0,0){\small{$1$}}}
\put(95,105){\makebox(0,0){\small{$0$}}}
\put(110,105){\makebox(0,0){\small{$0$}}}
\put(125,105){\makebox(0,0){\small{$-1$}}}
\put(140,105){\makebox(0,0){\small{$-1$}}}
\put(155,105){\makebox(0,0){\small{$-1$}}}
\put(170,105){\makebox(0,0){\small{$-1$}}}
\put(185,105){\makebox(0,0){\small{$-1$}}}
\put(200,105){\makebox(0,0){\small{$0$}}}
\put(215,105){\makebox(0,0){\small{$0$}}}

\put(5,115){\makebox(0,0){\small{$5$}}}
\put(20,115){\makebox(0,0){\small{$5$}}}
\put(35,115){\makebox(0,0){\small{$4$}}}
\put(50,115){\makebox(0,0){\small{$3$}}}
\put(65,115){\makebox(0,0){\small{$2$}}}
\put(80,115){\makebox(0,0){\small{$1$}}}
\put(95,115){\makebox(0,0){\small{$0$}}}
\put(110,115){\makebox(0,0){\small{$0$}}}
\put(125,115){\makebox(0,0){\small{$-1$}}}
\put(140,115){\makebox(0,0){\small{$-1$}}}
\put(155,115){\makebox(0,0){\small{$-1$}}}
\put(170,115){\makebox(0,0){\small{$-1$}}}
\put(185,115){\makebox(0,0){\small{${\bf -1}$}}}
\put(200,115){\makebox(0,0){\small{${\bf 0}$}}}
\put(215,115){\makebox(0,0){\small{${\bf 0}$}}}

\put(5,125){\makebox(0,0){\small{$7$}}}
\put(20,125){\makebox(0,0){\small{$7$}}}
\put(35,125){\makebox(0,0){\small{$5$}}}
\put(50,125){\makebox(0,0){\small{$4$}}}
\put(65,125){\makebox(0,0){\small{$3$}}}
\put(80,125){\makebox(0,0){\small{$2$}}}
\put(95,125){\makebox(0,0){\small{$1$}}}
\put(110,125){\makebox(0,0){\small{$1$}}}
\put(125,125){\makebox(0,0){\small{$0$}}}
\put(140,125){\makebox(0,0){\small{$0$}}}
\put(155,125){\makebox(0,0){\small{$0$}}}
\put(170,125){\makebox(0,0){\small{$0$}}}
\put(185,125){\makebox(0,0){\small{${\bf 0}$}}}
\put(200,125){\makebox(0,0){\small{$1$}}}
\put(215,125){\makebox(0,0){\small{$1$}}}

\put(5,135){\makebox(0,0){\small{$8$}}}
\put(20,135){\makebox(0,0){\small{$7$}}}
\put(35,135){\makebox(0,0){\small{$5$}}}
\put(50,135){\makebox(0,0){\small{$4$}}}
\put(65,135){\makebox(0,0){\small{$3$}}}
\put(80,135){\makebox(0,0){\small{$2$}}}
\put(95,135){\makebox(0,0){\small{$1$}}}
\put(110,135){\makebox(0,0){\small{$1$}}}
\put(125,135){\makebox(0,0){\small{$0$}}}
\put(140,135){\makebox(0,0){\small{$0$}}}
\put(155,135){\makebox(0,0){\small{$0$}}}
\put(170,135){\makebox(0,0){\small{$0$}}}
\put(185,135){\makebox(0,0){\small{${\bf 0}$}}}
\put(200,135){\makebox(0,0){\small{$1$}}}
\put(215,135){\makebox(0,0){\small{$0$}}}

\end{picture}

But even if we rely on
 the Filtered Reduction Theorem, using the rank two first quadrant $(\R_{\geq 0})^2$,
the complete description of the $\Z$--modules $(E^k_{*,*})_*$, together with their $U$--action,
is rather hard. This is mainly due  by the complicated shape (full with lagoons and tentacles)
of the spaces $S_n$. This phenomenon is the subject of the next example (which has even more lagoons).

\end{example}

\begin{example}\label{ex:twonodes}

Consider the following graph $\Gamma$ (for the  discussion of the unfiltered case
 see Example 11.4.11 of \cite{Nkonyv}).

\vspace{1mm}

\begin{center}
\begin{picture}(140,40)(80,35)
\put(110,60){\circle*{4}}
\put(140,60){\circle*{4}}
\put(170,60){\circle*{4}}
\put(200,60){\circle*{4}}
\put(80,60){\circle*{4}}
\put(50,60){\circle*{4}}
\put(230,60){\circle*{4}}
\put(260,60){\circle*{4}}
\put(50,60){\line(1,0){210}}
\put(80,60){\line(0,-1){20}}
\put(80,40){\circle*{4}}
\put(230,60){\line(0,-1){20}}
\put(230,40){\circle*{4}}
\put(50,70){\makebox(0,0){\small{$-2$}}}
\put(80,70){\makebox(0,0){\small{$-1$}}}
\put(110,70){\makebox(0,0){\small{$-7$}}}
\put(140,70){\makebox(0,0){\small{$-3$}}}
\put(170,70){\makebox(0,0){\small{$-3$}}}
\put(200,70){\makebox(0,0){\small{$-7$}}}
\put(230,70){\makebox(0,0){\small{$-1$}}}
\put(260,70){\makebox(0,0){\small{$-2$}}}
\put(95,40){\makebox(0,0){\small{$-3$}}}
\put(215,40){\makebox(0,0){\small{$-3$}}}
\end{picture}
\end{center}

We take $h=0$.
The two nodes constitute an SR--set, we will apply the Filtered Reduction Theorem for $h=0$ and
$s$ the sum of the dual base elements associated with the two nodes.
The projection of $Z_K$ is $(14,14)$, however we will consider the
 $\overline{w}$--table in the larger rectangle $R((0,0), (18,18))$. Though  for any $n$
 the homotopy type of $S_n$ and $S_n\cap R((0,0),(14,14))$ agree (cf. Proposition \ref{lem:con}),
 the same  is not true for the graded pieces.
 This can be seen  below for the `fenced'  $S_0$  (and this fact motivates to take the larger rectangle).

\begin{picture}(320,210)(-50,-10)

\put(0,-10){\line(1,0){10}}
\put(0,-10){\line(0,1){10}}
\put(10,-10){\line(0,1){10}}
\put(0,0){\line(1,0){10}}

\put(0,10){\line(1,0){25}}
\put(25,10){\line(0,-1){25}}
\put(0,60){\line(1,0){25}}
\put(0,70){\line(1,0){25}}
\put(25,70){\line(0,-1){10}}
\put(0,120){\line(1,0){25}}

\put(25,120){\line(0,1){60}}
\put(100,120){\line(0,1){60}}
\put(25,180){\line(1,0){75}}
\put(100,120){\line(1,0){15}}
\put(115,180){\line(0,-1){60}}
\put(115,180){\line(1,0){75}}
\put(190,180){\line(0,-1){60}}
\put(190,120){\line(1,0){90}}

\put(205,130){\line(1,0){75}}
\put(205,180){\line(1,0){75}}
\put(205,130){\line(0,1){50}}\put(280,130){\line(0,1){50}}

\put(280,120){\line(0,-1){50}}\put(190,60){\line(0,1){10}}
\put(190,60){\line(1,0){90}}
\put(190,70){\line(1,0){90}}
\put(280,60){\line(0,-1){50}}
\put(190,10){\line(1,0){90}}\put(190,10){\line(0,-1){20}}

\put(115,10){\line(0,-1){20}}\put(100,10){\line(0,-1){20}}
\put(100,10){\line(1,0){15}}

\put(100,60){\line(1,0){15}}
\put(100,70){\line(1,0){15}}
\put(100,60){\line(0,1){10}}
\put(115,60){\line(0,1){10}}

\put(5,-5){\makebox(0,0){\small{${\bf 0}$}}}
\put(20,-5){\makebox(0,0){\small{$1$}}}
\put(35,-5){\makebox(0,0){\small{$0$}}}
\put(50,-5){\makebox(0,0){\small{$0$}}}
\put(65,-5){\makebox(0,0){\small{$0$}}}
\put(80,-5){\makebox(0,0){\small{$0$}}}
\put(95,-5){\makebox(0,0){\small{$0$}}}
\put(110,-5){\makebox(0,0){\small{$1$}}}
\put(125,-5){\makebox(0,0){\small{$0$}}}
\put(140,-5){\makebox(0,0){\small{$0$}}}
\put(155,-5){\makebox(0,0){\small{$0$}}}
\put(170,-5){\makebox(0,0){\small{$0$}}}
\put(185,-5){\makebox(0,0){\small{$0$}}}
\put(200,-5){\makebox(0,0){\small{$1$}}}
\put(215,-5){\makebox(0,0){\small{$1$}}}
\put(230,-5){\makebox(0,0){\small{$1$}}}
\put(245,-5){\makebox(0,0){\small{$1$}}}
\put(260,-5){\makebox(0,0){\small{$1$}}}
\put(275,-5){\makebox(0,0){\small{$1$}}}
\put(290,-5){\makebox(0,0){\small{$2$}}}

\put(5,5){\makebox(0,0){\small{$1$}}}
\put(20,5){\makebox(0,0){\small{$1$}}}
\put(35,5){\makebox(0,0){\small{$0$}}}
\put(50,5){\makebox(0,0){\small{$0$}}}
\put(65,5){\makebox(0,0){\small{$0$}}}
\put(80,5){\makebox(0,0){\small{$0$}}}
\put(95,5){\makebox(0,0){\small{$0$}}}
\put(110,5){\makebox(0,0){\small{$1$}}}
\put(125,5){\makebox(0,0){\small{$0$}}}
\put(140,5){\makebox(0,0){\small{$0$}}}
\put(155,5){\makebox(0,0){\small{$0$}}}
\put(170,5){\makebox(0,0){\small{$0$}}}
\put(185,5){\makebox(0,0){\small{$0$}}}
\put(200,5){\makebox(0,0){\small{$1$}}}
\put(215,5){\makebox(0,0){\small{$1$}}}
\put(230,5){\makebox(0,0){\small{$1$}}}
\put(245,5){\makebox(0,0){\small{$1$}}}
\put(260,5){\makebox(0,0){\small{$1$}}}
\put(275,5){\makebox(0,0){\small{$1$}}}
\put(290,5){\makebox(0,0){\small{$2$}}}

\put(5,15){\makebox(0,0){\small{$0$}}}
\put(20,15){\makebox(0,0){\small{$0$}}}
\put(35,15){\makebox(0,0){\small{$-1$}}}
\put(50,15){\makebox(0,0){\small{$-1$}}}
\put(65,15){\makebox(0,0){\small{$-1$}}}
\put(80,15){\makebox(0,0){\small{$-1$}}}
\put(95,15){\makebox(0,0){\small{$-1$}}}
\put(110,15){\makebox(0,0){\small{$0$}}}
\put(125,15){\makebox(0,0){\small{$-1$}}}
\put(140,15){\makebox(0,0){\small{$-1$}}}
\put(155,15){\makebox(0,0){\small{$-1$}}}
\put(170,15){\makebox(0,0){\small{$-1$}}}
\put(185,15){\makebox(0,0){\small{$-1$}}}
\put(200,15){\makebox(0,0){\small{$0$}}}
\put(215,15){\makebox(0,0){\small{$0$}}}
\put(230,15){\makebox(0,0){\small{$0$}}}
\put(245,15){\makebox(0,0){\small{$0$}}}
\put(260,15){\makebox(0,0){\small{$0$}}}
\put(275,15){\makebox(0,0){\small{$0$}}}
\put(290,15){\makebox(0,0){\small{$1$}}}

\put(5,25){\makebox(0,0){\small{$0$}}}
\put(20,25){\makebox(0,0){\small{$0$}}}
\put(35,25){\makebox(0,0){\small{$-1$}}}
\put(50,25){\makebox(0,0){\small{$-1$}}}
\put(65,25){\makebox(0,0){\small{$-1$}}}
\put(80,25){\makebox(0,0){\small{$-1$}}}
\put(95,25){\makebox(0,0){\small{$-1$}}}
\put(110,25){\makebox(0,0){\small{$0$}}}
\put(125,25){\makebox(0,0){\small{$-1$}}}
\put(140,25){\makebox(0,0){\small{$-1$}}}
\put(155,25){\makebox(0,0){\small{$-1$}}}
\put(170,25){\makebox(0,0){\small{$-1$}}}
\put(185,25){\makebox(0,0){\small{$-1$}}}
\put(200,25){\makebox(0,0){\small{$0$}}}
\put(215,25){\makebox(0,0){\small{$0$}}}
\put(230,25){\makebox(0,0){\small{$0$}}}
\put(245,25){\makebox(0,0){\small{$0$}}}
\put(260,25){\makebox(0,0){\small{$0$}}}
\put(275,25){\makebox(0,0){\small{$0$}}}
\put(290,25){\makebox(0,0){\small{$1$}}}

\put(5,35){\makebox(0,0){\small{$0$}}}
\put(20,35){\makebox(0,0){\small{$0$}}}
\put(35,35){\makebox(0,0){\small{$-1$}}}
\put(50,35){\makebox(0,0){\small{$-1$}}}
\put(65,35){\makebox(0,0){\small{$-1$}}}
\put(80,35){\makebox(0,0){\small{$-1$}}}
\put(95,35){\makebox(0,0){\small{$-1$}}}
\put(110,35){\makebox(0,0){\small{$0$}}}
\put(125,35){\makebox(0,0){\small{$-1$}}}
\put(140,35){\makebox(0,0){\small{$-1$}}}
\put(155,35){\makebox(0,0){\small{$-1$}}}
\put(170,35){\makebox(0,0){\small{$-1$}}}
\put(185,35){\makebox(0,0){\small{$-1$}}}
\put(200,35){\makebox(0,0){\small{$0$}}}
\put(215,35){\makebox(0,0){\small{$0$}}}
\put(230,35){\makebox(0,0){\small{$0$}}}
\put(245,35){\makebox(0,0){\small{$0$}}}
\put(260,35){\makebox(0,0){\small{$0$}}}
\put(275,35){\makebox(0,0){\small{$0$}}}
\put(290,35){\makebox(0,0){\small{$1$}}}

\put(5,45){\makebox(0,0){\small{$0$}}}
\put(20,45){\makebox(0,0){\small{$0$}}}
\put(35,45){\makebox(0,0){\small{$-1$}}}
\put(50,45){\makebox(0,0){\small{$-1$}}}
\put(65,45){\makebox(0,0){\small{$-1$}}}
\put(80,45){\makebox(0,0){\small{$-1$}}}
\put(95,45){\makebox(0,0){\small{$-1$}}}
\put(110,45){\makebox(0,0){\small{$0$}}}
\put(125,45){\makebox(0,0){\small{$-1$}}}
\put(140,45){\makebox(0,0){\small{$-1$}}}
\put(155,45){\makebox(0,0){\small{$-1$}}}
\put(170,45){\makebox(0,0){\small{$-1$}}}
\put(185,45){\makebox(0,0){\small{$-1$}}}
\put(200,45){\makebox(0,0){\small{$0$}}}
\put(215,45){\makebox(0,0){\small{$0$}}}
\put(230,45){\makebox(0,0){\small{$0$}}}
\put(245,45){\makebox(0,0){\small{$0$}}}
\put(260,45){\makebox(0,0){\small{$0$}}}
\put(275,45){\makebox(0,0){\small{$0$}}}
\put(290,45){\makebox(0,0){\small{$1$}}}

\put(5,55){\makebox(0,0){\small{$0$}}}
\put(20,55){\makebox(0,0){\small{$0$}}}
\put(35,55){\makebox(0,0){\small{$-1$}}}
\put(50,55){\makebox(0,0){\small{$-1$}}}
\put(65,55){\makebox(0,0){\small{$-1$}}}
\put(80,55){\makebox(0,0){\small{$-1$}}}
\put(95,55){\makebox(0,0){\small{$\hspace{-1mm}{\bf -1}$}}}
\put(110,55){\makebox(0,0){\small{${\bf 0}$}}}
\put(125,55){\makebox(0,0){\small{$-1$}}}
\put(140,55){\makebox(0,0){\small{$-1$}}}
\put(155,55){\makebox(0,0){\small{$-1$}}}
\put(170,55){\makebox(0,0){\small{$-1$}}}
\put(185,55){\makebox(0,0){\small{$\hspace{-1mm}{\bf -1}$}}}
\put(200,55){\makebox(0,0){\small{${\bf 0}$}}}
\put(215,55){\makebox(0,0){\small{$0$}}}
\put(230,55){\makebox(0,0){\small{$0$}}}
\put(245,55){\makebox(0,0){\small{$0$}}}
\put(260,55){\makebox(0,0){\small{$0$}}}
\put(275,55){\makebox(0,0){\small{${\bf 0}$}}}
\put(290,55){\makebox(0,0){\small{$1$}}}

\put(5,65){\makebox(0,0){\small{$1$}}}
\put(20,65){\makebox(0,0){\small{$1$}}}
\put(35,65){\makebox(0,0){\small{$0$}}}
\put(50,65){\makebox(0,0){\small{$0$}}}
\put(65,65){\makebox(0,0){\small{$0$}}}
\put(80,65){\makebox(0,0){\small{$0$}}}
\put(96,65){\makebox(0,0){\small{${\bf 0}$}}}
\put(110,65){\makebox(0,0){\small{$1$}}}
\put(125,65){\makebox(0,0){\small{$0$}}}
\put(140,65){\makebox(0,0){\small{$0$}}}
\put(155,65){\makebox(0,0){\small{$0$}}}
\put(170,65){\makebox(0,0){\small{$0$}}}
\put(186,65){\makebox(0,0){\small{${\bf 0}$}}}
\put(200,65){\makebox(0,0){\small{$1$}}}
\put(215,65){\makebox(0,0){\small{$1$}}}
\put(230,65){\makebox(0,0){\small{$1$}}}
\put(245,65){\makebox(0,0){\small{$1$}}}
\put(260,65){\makebox(0,0){\small{$1$}}}
\put(275,65){\makebox(0,0){\small{$1$}}}
\put(290,65){\makebox(0,0){\small{$2$}}}

\put(5,75){\makebox(0,0){\small{$0$}}}
\put(20,75){\makebox(0,0){\small{$0$}}}
\put(35,75){\makebox(0,0){\small{$-1$}}}
\put(50,75){\makebox(0,0){\small{$-1$}}}
\put(65,75){\makebox(0,0){\small{$-1$}}}
\put(80,75){\makebox(0,0){\small{$-1$}}}
\put(95,75){\makebox(0,0){\small{$-1$}}}
\put(110,75){\makebox(0,0){\small{$0$}}}
\put(125,75){\makebox(0,0){\small{$-1$}}}
\put(140,75){\makebox(0,0){\small{$-1$}}}
\put(155,75){\makebox(0,0){\small{$-1$}}}
\put(170,75){\makebox(0,0){\small{$-1$}}}
\put(185,75){\makebox(0,0){\small{$-1$}}}
\put(200,75){\makebox(0,0){\small{$0$}}}
\put(215,75){\makebox(0,0){\small{$0$}}}
\put(230,75){\makebox(0,0){\small{$0$}}}
\put(245,75){\makebox(0,0){\small{$0$}}}
\put(260,75){\makebox(0,0){\small{$0$}}}
\put(275,75){\makebox(0,0){\small{$0$}}}
\put(290,75){\makebox(0,0){\small{$1$}}}

\put(5,85){\makebox(0,0){\small{$0$}}}
\put(20,85){\makebox(0,0){\small{$0$}}}
\put(35,85){\makebox(0,0){\small{$-1$}}}
\put(50,85){\makebox(0,0){\small{$-1$}}}
\put(65,85){\makebox(0,0){\small{$-1$}}}
\put(80,85){\makebox(0,0){\small{$-1$}}}
\put(95,85){\makebox(0,0){\small{$-1$}}}
\put(110,85){\makebox(0,0){\small{$0$}}}
\put(125,85){\makebox(0,0){\small{$-1$}}}
\put(140,85){\makebox(0,0){\small{$-1$}}}
\put(155,85){\makebox(0,0){\small{$-1$}}}
\put(170,85){\makebox(0,0){\small{$-1$}}}
\put(185,85){\makebox(0,0){\small{$-1$}}}
\put(200,85){\makebox(0,0){\small{$0$}}}
\put(215,85){\makebox(0,0){\small{$0$}}}
\put(230,85){\makebox(0,0){\small{$0$}}}
\put(245,85){\makebox(0,0){\small{$0$}}}
\put(260,85){\makebox(0,0){\small{$0$}}}
\put(275,85){\makebox(0,0){\small{$0$}}}
\put(290,85){\makebox(0,0){\small{$1$}}}

\put(5,95){\makebox(0,0){\small{$0$}}}
\put(20,95){\makebox(0,0){\small{$0$}}}
\put(35,95){\makebox(0,0){\small{$-1$}}}
\put(50,95){\makebox(0,0){\small{$-1$}}}
\put(65,95){\makebox(0,0){\small{$-1$}}}
\put(80,95){\makebox(0,0){\small{$-1$}}}
\put(95,95){\makebox(0,0){\small{$-1$}}}
\put(110,95){\makebox(0,0){\small{$0$}}}
\put(125,95){\makebox(0,0){\small{$-1$}}}
\put(140,95){\makebox(0,0){\small{$-1$}}}
\put(155,95){\makebox(0,0){\small{$-1$}}}
\put(170,95){\makebox(0,0){\small{$-1$}}}
\put(185,95){\makebox(0,0){\small{$-1$}}}
\put(200,95){\makebox(0,0){\small{$0$}}}
\put(215,95){\makebox(0,0){\small{$0$}}}
\put(230,95){\makebox(0,0){\small{$0$}}}
\put(245,95){\makebox(0,0){\small{$0$}}}
\put(260,95){\makebox(0,0){\small{$0$}}}
\put(275,95){\makebox(0,0){\small{$0$}}}
\put(290,95){\makebox(0,0){\small{$1$}}}

\put(5,105){\makebox(0,0){\small{$0$}}}
\put(20,105){\makebox(0,0){\small{$0$}}}
\put(35,105){\makebox(0,0){\small{$-1$}}}
\put(50,105){\makebox(0,0){\small{$-1$}}}
\put(65,105){\makebox(0,0){\small{$-1$}}}
\put(80,105){\makebox(0,0){\small{$-1$}}}
\put(95,105){\makebox(0,0){\small{$-1$}}}
\put(110,105){\makebox(0,0){\small{$0$}}}
\put(125,105){\makebox(0,0){\small{$-1$}}}
\put(140,105){\makebox(0,0){\small{$-1$}}}
\put(155,105){\makebox(0,0){\small{$-1$}}}
\put(170,105){\makebox(0,0){\small{$-1$}}}
\put(185,105){\makebox(0,0){\small{$-1$}}}
\put(200,105){\makebox(0,0){\small{$0$}}}
\put(215,105){\makebox(0,0){\small{$0$}}}
\put(230,105){\makebox(0,0){\small{$0$}}}
\put(245,105){\makebox(0,0){\small{$0$}}}
\put(260,105){\makebox(0,0){\small{$0$}}}
\put(275,105){\makebox(0,0){\small{$0$}}}
\put(290,105){\makebox(0,0){\small{$1$}}}

\put(5,115){\makebox(0,0){\small{$0$}}}
\put(20,115){\makebox(0,0){\small{$0$}}}
\put(35,115){\makebox(0,0){\small{$-1$}}}
\put(50,115){\makebox(0,0){\small{$-1$}}}
\put(65,115){\makebox(0,0){\small{$-1$}}}
\put(80,115){\makebox(0,0){\small{$-1$}}}
\put(95,115){\makebox(0,0){\small{$\hspace{-1mm}{\bf -1}$}}}
\put(110,115){\makebox(0,0){\small{${\bf 0}$}}}
\put(125,115){\makebox(0,0){\small{$-1$}}}
\put(140,115){\makebox(0,0){\small{$-1$}}}
\put(155,115){\makebox(0,0){\small{$-1$}}}
\put(170,115){\makebox(0,0){\small{$-1$}}}
\put(185,115){\makebox(0,0){\small{$\hspace{-1mm}{\bf -1}$}}}
\put(200,115){\makebox(0,0){\small{${\bf 0}$}}}
\put(215,115){\makebox(0,0){\small{$0$}}}
\put(230,115){\makebox(0,0){\small{$0$}}}
\put(245,115){\makebox(0,0){\small{$0$}}}
\put(260,115){\makebox(0,0){\small{$0$}}}
\put(275,115){\makebox(0,0){\small{${\bf 0}$}}}
\put(290,115){\makebox(0,0){\small{$1$}}}

\put(5,125){\makebox(0,0){\small{$1$}}}
\put(20,125){\makebox(0,0){\small{$1$}}}
\put(35,125){\makebox(0,0){\small{$0$}}}
\put(50,125){\makebox(0,0){\small{$0$}}}
\put(65,125){\makebox(0,0){\small{$0$}}}
\put(80,125){\makebox(0,0){\small{$0$}}}
\put(96,125){\makebox(0,0){\small{${\bf 0}$}}}
\put(110,125){\makebox(0,0){\small{$1$}}}
\put(125,125){\makebox(0,0){\small{$0$}}}
\put(140,125){\makebox(0,0){\small{$0$}}}
\put(155,125){\makebox(0,0){\small{$0$}}}
\put(170,125){\makebox(0,0){\small{$0$}}}
\put(186,125){\makebox(0,0){\small{${\bf 0}$}}}
\put(200,125){\makebox(0,0){\small{$1$}}}
\put(215,125){\makebox(0,0){\small{$1$}}}
\put(230,125){\makebox(0,0){\small{$1$}}}
\put(245,125){\makebox(0,0){\small{$1$}}}
\put(260,125){\makebox(0,0){\small{$1$}}}
\put(275,125){\makebox(0,0){\small{$1$}}}
\put(290,125){\makebox(0,0){\small{$2$}}}

\put(5,135){\makebox(0,0){\small{$1$}}}
\put(20,135){\makebox(0,0){\small{$1$}}}
\put(35,135){\makebox(0,0){\small{$0$}}}
\put(50,135){\makebox(0,0){\small{$0$}}}
\put(65,135){\makebox(0,0){\small{$0$}}}
\put(80,135){\makebox(0,0){\small{$0$}}}
\put(95,135){\makebox(0,0){\small{$0$}}}
\put(110,135){\makebox(0,0){\small{$1$}}}
\put(125,135){\makebox(0,0){\small{$0$}}}
\put(140,135){\makebox(0,0){\small{$0$}}}
\put(155,135){\makebox(0,0){\small{$0$}}}
\put(170,135){\makebox(0,0){\small{$0$}}}
\put(185,135){\makebox(0,0){\small{$0$}}}
\put(200,135){\makebox(0,0){\small{$1$}}}
\put(215,135){\makebox(0,0){\small{$0$}}}
\put(230,135){\makebox(0,0){\small{$0$}}}
\put(245,135){\makebox(0,0){\small{$0$}}}
\put(260,135){\makebox(0,0){\small{$0$}}}
\put(275,135){\makebox(0,0){\small{$0$}}}
\put(290,135){\makebox(0,0){\small{$1$}}}

\put(5,145){\makebox(0,0){\small{$1$}}}
\put(20,145){\makebox(0,0){\small{$1$}}}
\put(35,145){\makebox(0,0){\small{$0$}}}
\put(50,145){\makebox(0,0){\small{$0$}}}
\put(65,145){\makebox(0,0){\small{$0$}}}
\put(80,145){\makebox(0,0){\small{$0$}}}
\put(95,145){\makebox(0,0){\small{$0$}}}
\put(110,145){\makebox(0,0){\small{$1$}}}
\put(125,145){\makebox(0,0){\small{$0$}}}
\put(140,145){\makebox(0,0){\small{$0$}}}
\put(155,145){\makebox(0,0){\small{$0$}}}
\put(170,145){\makebox(0,0){\small{$0$}}}
\put(185,145){\makebox(0,0){\small{$0$}}}
\put(200,145){\makebox(0,0){\small{$1$}}}
\put(215,145){\makebox(0,0){\small{$0$}}}
\put(230,145){\makebox(0,0){\small{$0$}}}
\put(245,145){\makebox(0,0){\small{$0$}}}
\put(260,145){\makebox(0,0){\small{$0$}}}
\put(275,145){\makebox(0,0){\small{$0$}}}
\put(290,145){\makebox(0,0){\small{$1$}}}

\put(5,155){\makebox(0,0){\small{$1$}}}
\put(20,155){\makebox(0,0){\small{$1$}}}
\put(35,155){\makebox(0,0){\small{$0$}}}
\put(50,155){\makebox(0,0){\small{$0$}}}
\put(65,155){\makebox(0,0){\small{$0$}}}
\put(80,155){\makebox(0,0){\small{$0$}}}
\put(95,155){\makebox(0,0){\small{$0$}}}
\put(110,155){\makebox(0,0){\small{$1$}}}
\put(125,155){\makebox(0,0){\small{$0$}}}
\put(140,155){\makebox(0,0){\small{$0$}}}
\put(155,155){\makebox(0,0){\small{$0$}}}
\put(170,155){\makebox(0,0){\small{$0$}}}
\put(185,155){\makebox(0,0){\small{$0$}}}
\put(200,155){\makebox(0,0){\small{$1$}}}
\put(215,155){\makebox(0,0){\small{$0$}}}
\put(230,155){\makebox(0,0){\small{$0$}}}
\put(245,155){\makebox(0,0){\small{$0$}}}
\put(260,155){\makebox(0,0){\small{$0$}}}
\put(275,155){\makebox(0,0){\small{$0$}}}
\put(290,155){\makebox(0,0){\small{$1$}}}

\put(5,165){\makebox(0,0){\small{$1$}}}
\put(20,165){\makebox(0,0){\small{$1$}}}
\put(35,165){\makebox(0,0){\small{$0$}}}
\put(50,165){\makebox(0,0){\small{$0$}}}
\put(65,165){\makebox(0,0){\small{$0$}}}
\put(80,165){\makebox(0,0){\small{$0$}}}
\put(95,165){\makebox(0,0){\small{$0$}}}
\put(110,165){\makebox(0,0){\small{$1$}}}
\put(125,165){\makebox(0,0){\small{$0$}}}
\put(140,165){\makebox(0,0){\small{$0$}}}
\put(155,165){\makebox(0,0){\small{$0$}}}
\put(170,165){\makebox(0,0){\small{$0$}}}
\put(185,165){\makebox(0,0){\small{$0$}}}
\put(200,165){\makebox(0,0){\small{$1$}}}
\put(215,165){\makebox(0,0){\small{$0$}}}
\put(230,165){\makebox(0,0){\small{$0$}}}
\put(245,165){\makebox(0,0){\small{$0$}}}
\put(260,165){\makebox(0,0){\small{$0$}}}
\put(275,165){\makebox(0,0){\small{$0$}}}
\put(290,165){\makebox(0,0){\small{$1$}}}

\put(5,175){\makebox(0,0){\small{$1$}}}
\put(20,175){\makebox(0,0){\small{$1$}}}
\put(35,175){\makebox(0,0){\small{$0$}}}
\put(50,175){\makebox(0,0){\small{$0$}}}
\put(65,175){\makebox(0,0){\small{$0$}}}
\put(80,175){\makebox(0,0){\small{$0$}}}
\put(95,175){\makebox(0,0){\small{${\bf 0}$}}}
\put(110,175){\makebox(0,0){\small{$1$}}}
\put(125,175){\makebox(0,0){\small{$0$}}}
\put(140,175){\makebox(0,0){\small{$0$}}}
\put(155,175){\makebox(0,0){\small{$0$}}}
\put(170,175){\makebox(0,0){\small{$0$}}}
\put(185,175){\makebox(0,0){\small{${\bf 0}$}}}
\put(200,175){\makebox(0,0){\small{$1$}}}
\put(215,175){\makebox(0,0){\small{$0$}}}
\put(230,175){\makebox(0,0){\small{$0$}}}
\put(245,175){\makebox(0,0){\small{$0$}}}
\put(260,175){\makebox(0,0){\small{$0$}}}
\put(275,175){\makebox(0,0){\small{${\bf 0}$}}}
\put(290,175){\makebox(0,0){\small{$1$}}}

\put(5,185){\makebox(0,0){\small{$2$}}}
\put(20,185){\makebox(0,0){\small{$2$}}}
\put(35,185){\makebox(0,0){\small{$1$}}}
\put(50,185){\makebox(0,0){\small{$1$}}}
\put(65,185){\makebox(0,0){\small{$1$}}}
\put(80,185){\makebox(0,0){\small{$1$}}}
\put(95,185){\makebox(0,0){\small{$1$}}}
\put(110,185){\makebox(0,0){\small{$2$}}}
\put(125,185){\makebox(0,0){\small{$1$}}}
\put(140,185){\makebox(0,0){\small{$1$}}}
\put(155,185){\makebox(0,0){\small{$1$}}}
\put(170,185){\makebox(0,0){\small{$1$}}}
\put(185,185){\makebox(0,0){\small{$1$}}}
\put(200,185){\makebox(0,0){\small{$2$}}}
\put(215,185){\makebox(0,0){\small{$1$}}}
\put(230,185){\makebox(0,0){\small{$1$}}}
\put(245,185){\makebox(0,0){\small{$1$}}}
\put(260,185){\makebox(0,0){\small{$1$}}}
\put(275,185){\makebox(0,0){\small{$1$}}}
\put(290,185){\makebox(0,0){\small{$2$}}}

\end{picture}

\vspace{1mm}

The table shows that
$\bH_0=\calt^-_{2}\oplus \calt_{2}(1)^3\oplus \calt_{0}(1)^2 \ \ \ \mbox{and} \ \ \
\bH_1=\calt_0(1).$

From the picture one can  read the contributions given by $S_0$ in $PE_1(T,Q,\h)$. They are
 $Q^0T^{ 36}\h^0$, $2Q^0T^{ 30}\h^0$, $Q^0T^{ 24}\h^1$, $2Q^0T^{ 24}\h^0$, $2Q^0T^{ 18}\h^1$, $Q^0T^{12}\h^1$ and
 $Q^0T^{ 0}\h^0$ (see the boldface generators in the diagram above). From all these only the following terms survives in $(E^7_{*,*})_0=(E^\infty_{*,*})_0$, they are
  $Q^0T^{ 36}\h^0$, $Q^0T^{ 30}\h^0$, $Q^0T^{12}\h^1$ and
 $Q^0T^{ 0}\h^0$.  The degeneration happens at the level of $E^6_{*,*}$, via the maps
 $d^6_{-24,25}:\Z=(E^6_{-24,25})_0\to \Z^2=(E^6_{-30,30})_0$ and
 $d^6_{-18,19}:\Z^2=(E^6_{-18,19})_0\to \Z^2=(E^6_{-24,24})_0$.

 Though $S_n$ is contractible  for any $n\geq 1$, its shape and the corresponding nontrivial  terms $T^lQ^n\h^b$
 in $PE_k$ are rather non-trivial, because of the existence of `lagoons' of the spaces $S_n$.
 Interestingly enough,  the presence and shape of `lagoons' of $S_0$
 are repeated in the other spaces $\{S_n\}_{n\geq 0}$ as well, and the regularity and sides of the
   recurrences are not immediate  transparent  even if we look at larger diagrams (say, on the rectangle
   of size $100\times 100$).

  However, in subsection \ref{ss:einfty} we will prove a conceptual regularity/periodicity statement. The period
  $p$ considered in  Theorem \ref{th:peinfty} applied for
   the case of Example \ref{ex:latNV1} is $-(s,s)=156$, in the case of the present  Example
  \ref{ex:twonodes} it  is 468. On the other hand, in some cases it is 2,
   e.g. for the $A_2$ singularity with either $s=E_1^*$ or $s=E_1^*+E_2^*$.
  For this last case we provide a complete description below.

\end{example}

\begin{example}\label{ex:22}{\bf (The $\Z[U]$ module structure and $PE_k(T,Q,\h)$ for  $A_2$)}

We have already seen that
different  choices of the Weil divisor, or, equivalently, of the semigroup element $s\in\calS'$,
produce very different filtrations, spectral sequences and Poincar\'e series.
We exemplify this
phenomenon by the graph
\begin{picture}(40,15)(-5,2)
\put(5,5){\circle*{4}}
\put(30,5){\circle*{4}}
\put(5,5){\line(1,0){25}}
\put(5,11){\makebox(0,0){\footnotesize{$-2$}}}
\put(30,11){\makebox(0,0){\footnotesize{$-2$}}}
\end{picture}\ .
Additionally, in this case we also show how in the expression of the
Poincar\'e series certain characters and
periodicity  appear.

We take $h=0$, hence the weight function on the lattice points is $\chi(l)=\chi(l_1,l_2)=
l_1^2+l_2^2-l_1l_2$.  The minimal value of $\chi$ is zero.
Some of the  spaces $S_n$ are listed below:

\begin{picture}(300,100)(0,-10)

\put(10,10){\circle*{2}}
\put(15,0){\makebox(0,0){\footnotesize{$S_0$}}}

\put(35,10){\line(1,0){10}}\put(35,10){\line(0,1){10}}
\put(45,10){\line(0,1){10}}\put(35,20){\line(1,0){10}}
\put(35,12){\line(1,0){10}}\put(35,14){\line(1,0){10}}\put(35,16){\line(1,0){10}}\put(35,18){\line(1,0){10}}
\put(40,0){\makebox(0,0){\footnotesize{$S_{1,2}$}}}

\put(60,10){\line(1,0){10}}\put(60,10){\line(0,1){10}}
\put(70,10){\line(0,1){20}}\put(60,20){\line(1,0){20}}
\put(60,12){\line(1,0){10}}\put(60,14){\line(1,0){10}}\put(60,16){\line(1,0){10}}\put(60,18){\line(1,0){10}}
\put(65,0){\makebox(0,0){\footnotesize{$S_3$}}}


\put(90,10){\line(1,0){20}}\put(90,10){\line(0,1){20}}
\put(110,10){\line(0,1){20}}\put(90,30){\line(1,0){20}}
\put(90,12){\line(1,0){20}}\put(90,14){\line(1,0){20}}\put(90,16){\line(1,0){20}}
\put(90,18){\line(1,0){20}}\put(90,20){\line(1,0){20}}\put(90,22){\line(1,0){20}}\put(90,24){\line(1,0){20}}
\put(90,26){\line(1,0){20}}\put(90,28){\line(1,0){20}}
\put(100,0){\makebox(0,0){\footnotesize{$S_{4,5,6}$}}}

\put(130,10){\line(1,0){20}}\put(130,10){\line(0,1){20}}
\put(150,10){\line(0,1){10}}\put(150,20){\line(1,0){10}}
\put(130,30){\line(1,0){10}}\put(140,30){\line(0,1){10}}
\put(140,40){\line(1,0){10}}
\put(150,40){\line(0,-1){10}}
\put(150,30){\line(1,0){10}}\put(160,30){\line(0,-1){10}}

\put(130,12){\line(1,0){20}}\put(130,14){\line(1,0){20}}\put(130,16){\line(1,0){20}}
\put(130,18){\line(1,0){20}}\put(130,20){\line(1,0){30}}\put(130,22){\line(1,0){30}}
\put(130,24){\line(1,0){30}}
\put(130,26){\line(1,0){30}}\put(130,28){\line(1,0){30}}
\put(140,30){\line(1,0){10}}\put(140,32){\line(1,0){10}}\put(140,34){\line(1,0){10}}
\put(140,36){\line(1,0){10}}\put(140,38){\line(1,0){10}}
\put(140,0){\makebox(0,0){\footnotesize{$S_{7,8}$}}}

\put(180,10){\line(1,0){30}}\put(180,10){\line(0,1){30}}
\put(210,10){\line(0,1){30}}\put(180,40){\line(1,0){30}}

\put(180,12){\line(1,0){30}}\put(180,14){\line(1,0){30}}\put(180,16){\line(1,0){30}}
\put(180,18){\line(1,0){30}}\put(180,20){\line(1,0){30}}\put(180,22){\line(1,0){30}}
\put(180,24){\line(1,0){30}}
\put(180,26){\line(1,0){30}}\put(180,28){\line(1,0){30}}
\put(180,30){\line(1,0){30}}\put(180,32){\line(1,0){30}}\put(180,34){\line(1,0){30}}
\put(180,36){\line(1,0){30}}\put(180,38){\line(1,0){30}}
\put(195,0){\makebox(0,0){\footnotesize{$S_{9,10,11}$}}}

\put(250,25){\makebox(0,0){$\cdots $}}

\put(280,10){\line(1,0){60}}
\put(280,10){\line(0,1){60}}
\put(280,70){\line(1,0){10}}
\put(290,70){\line(0,1){10}}

\put(290,80){\line(1,0){50}}
\put(340,80){\line(0,-1){10}}
\put(340,70){\line(1,0){10}}
\put(350,70){\line(0,-1){50}}
\put(350,20){\line(-1,0){10}}
\put(340,20){\line(0,-1){10}}
\put(320,80){\line(0,1){10}}
\put(350,50){\line(1,0){10}}

\put(280,12){\line(1,0){60}}
\put(280,14){\line(1,0){60}}
\put(280,16){\line(1,0){60}}
\put(280,18){\line(1,0){60}}
\put(280,20){\line(1,0){70}}
\put(280,22){\line(1,0){70}}
\put(280,24){\line(1,0){70}}
\put(280,26){\line(1,0){70}}
\put(280,28){\line(1,0){70}}

\put(280,30){\line(1,0){70}}
\put(280,32){\line(1,0){70}}
\put(280,34){\line(1,0){70}}
\put(280,36){\line(1,0){70}}
\put(280,38){\line(1,0){70}}

\put(280,40){\line(1,0){70}}
\put(280,42){\line(1,0){70}}
\put(280,44){\line(1,0){70}}
\put(280,46){\line(1,0){70}}
\put(280,48){\line(1,0){70}}

\put(280,50){\line(1,0){70}}
\put(280,52){\line(1,0){70}}
\put(280,54){\line(1,0){70}}
\put(280,56){\line(1,0){70}}
\put(280,58){\line(1,0){70}}

\put(280,60){\line(1,0){70}}
\put(280,62){\line(1,0){70}}
\put(280,64){\line(1,0){70}}
\put(280,66){\line(1,0){70}}
\put(280,68){\line(1,0){70}}

\put(290,70){\line(1,0){50}}
\put(290,72){\line(1,0){50}}
\put(290,74){\line(1,0){50}}
\put(290,76){\line(1,0){50}}
\put(290,78){\line(1,0){50}}

\put(390,28){\makebox(0,0){\footnotesize{$l_1+l_2=13$}}}
\dashline[60]{1}(380,40)(330,90)

\put(320,0){\makebox(0,0){\footnotesize{$S_{48}$}}}

\end{picture}

Since the graph is rational, all the spaces $S_n$ are contractible, $\bH_0=\calt_0^-$ and
$\bH_{>0}=0$. However, the filtrations of the spaces $S_n$ can be highly nontrivial.

\vspace{1mm}

{\bf Case I.}\ First, we choose $s=E_1^*$, that is, $\frX_{-d}=\{l\,:\, l_1\geq d\}$.

A computation shows that each nonempty $S_n\cap \frX_{-d}$ is contractible. Therefore,
$H_*(S_n\cap \frX_{-d}, S_n\cap \frX_{-d-1})$ is nontrivial exactly when
$S_n\cap \frX_{-d}\not=\emptyset$ and $S_n\cap \frX_{-d-1} =\emptyset$, and it is concentrated in homological degree zero.

For any fixed $l_1$ let ${\rm min}(l_1)$ be the minimal value of $l_2\mapsto \chi(l_1,l_2)$, $l_2\in\Z$.
It equals
\begin{equation}\label{eq:min}
{\rm min}(l_1)=\frac{3l_1^2+\epsilon(l_1)}{4}, \ \ \mbox{where } \ \ \
\epsilon(l_1)=\left\{ \begin{array}{ll} 0 & \mbox{if \ $l_1$ \ is even,} \\
 1 & \mbox{if \ $l_1$ \ is odd.}  \end{array} \right.
\end{equation}
Therefore, $EP_1(T,Q,\h)=EP_1(T,Q)$ equals
\begin{equation*}\label{eq:ep22}
\begin{split}
\sum_{l_1\geq 0} \, T^{l_1}( \, Q^{{\rm min}(l_1)}+\cdots +
 Q^{{\rm min}(l_1+1)-1}\,)
&=\frac{1}{1-Q}\cdot \big( T^0( Q^{{\rm min}(0)}- Q^{{\rm min}(1)}) +
 T^1( Q^{{\rm min}(1)}- Q^{{\rm min}(2)})+\cdots \\
&=\frac{1}{1-Q}\cdot \big( \, 1+(T-1)\cdot \sum_{l\geq 1} T^{l-1}Q^{{\rm min}(l)}\,\big).\\
\end{split}
\end{equation*}
If $(E^1_{-d,q})_n\not=0$ then necessarily $q=d=l_1$ and $\min(d)\leq n<\min(d+1)$.
As a  $\Z[U]$--module $$\oplus _{d\geq 0} (E^1_{-d,d})_n=
\oplus_{d\geq 0} \, \calt_{-2\min(d)} (\min(d+1)-\min(d)).$$
 Moreover, the spectral sequence degenerates at $E^1$ level, $(E^1_{*,*})_n=(E^\infty_{*,*})_n$ for any $n$ compatibly with the $U$--action,
and  $EP_k(T,Q,\h)=EP_1(T,Q,\h)=EP_1(T,Q,\h=0)$ for any $k\geq 1$.

\vspace{1mm}

{\bf Case II.} Next, assume that $s=E_1^*+E_2^*$, i.e.  $\frX_{-d}=\{l\,:\, l_1+l_2\geq d\}$. Write
$|l|:=l_1+l_2$.

We first compute the filtration $\{F_{-d}\bH_0(\frX)_{-2n}\}_{d\geq 0}$. For any fixed $n\geq 0$ the space $S_n$ is contractible, hence
$H_*(S_n,\Z)=H_0(S_n,\Z)=\Z$. Let $d(n)$ be such that $S_n\cap \frX_{-d(n)}\not=\emptyset$, but $S_n\cap \frX_{-d(n)-1}=\emptyset$.
Then $F_{-d}\bH_0(\frX)_{-2n}=\Z$ for $d\leq d(n)$ and it is zero for $d>d(n)$.
The weights of the vertices of a `diagonal cube' are: $\chi(l,l)=l^2$,
$\chi(l+1,l)=\chi(l, l+1)=l^2+l+1$ and $\chi(l+1,l+1)=(l+1)^2$.
Therefore (via a computation),  the level $d=2l$ serves as $d(n)$  for any $n$ with $l^2\leq n<l^2+l+1$ and $d=2l+1$ serves as $d(n)$ whenever
$l^2+l+1\leq n<(l+1)^2$.
Note that by these inequalities in the nontrivial terms of
 $\oplus _{n,d}(E^\infty _{-d,d})_n$
  the  filtration degree $d$ is uniquely determined by the weight
degree $n$.
In particular,
\begin{equation}\label{eq:epinfty}\begin{split}
EP_{\infty}(T,Q,\h)&=\sum_{l\geq 0} \, T^{2l}\cdot (Q^{l^2}+\cdots + Q^{l^2+l})+
T^{2l+1}\cdot (Q^{l^2+l+1}+\cdots + Q^{l^2+2l})\\
&=\frac{1}{1-Q}\cdot \sum_{l\geq 0}\, T^{2l} ( Q^{l^2}-Q^{l^2+l+1})+T^{2l+1}(Q^{l^2+l+1}-Q^{l^2+2l+1})\,\big)\\
&=\frac{1}{1-Q}+ \frac{T-1}{1-Q}\cdot \Big(\, \sum_{l\geq 0}T^{2l}Q^{l^2+l+1} +\sum_{l\geq 0}T^{2l+1}Q^{(l+1)^2}\, \Big).
\end{split}\end{equation}
and, as a $\Z[U]$--module,
$${\rm Gr}^F_{*}\bH_0(\frX)_*=\bigoplus _{n,d}(E^\infty _{-d,d})_n=
\bigoplus_{d\geq 0} \, \Big( \ \calt_{-2l^2}(l+1)\,\oplus \, \calt_{-2(l^2+l+1)}(l)\, \Big).$$


In Case II,
 $EP_1(T,Q,\h)$ is more complicated than in Case I.  E.g., from the above picture of spaces $S_n$  one reads directly that
the coefficient of $Q^{48}$ in $EP_1(T,Q,\h)$ is $2T^{13}\h^0+2T^{12}\h^0+T^{12}\h^1+2T^{11}\h^1$.
Hence, for $n=48$  the spectral sequence has the following page $(E^1_{*,*})_{48}$:

\begin{picture}(300,55)(-40,42)

 \dashline[200]{1}(30,50)(30,80)
  \dashline[200]{1}(20,50)(20,80)\dashline[200]{1}(10,50)(10,80)
  \dashline[200]{1}(0,50)(0,80)
  \dashline[200]{1}(-10,50)(-10,80)
 \dashline[200]{1}(40,50)(40,80)

  \dashline[200]{1}(-10,50)(40,50)\dashline[200]{1}(-10,60)(40,60)
  \dashline[200]{1}(-10,70)(40,70)\dashline[200]{1}(-10,80)(40,80)

    \put(2,71){\makebox(0,0){\footnotesize{$\Z^2$}}} \put(12,71){\makebox(0,0){\footnotesize{$\Z^2$}}}
\put(-8,81){\makebox(0,0){\footnotesize{$\Z^2$}}}
 \put(1,80){\makebox(0,0){\footnotesize{$\Z$}}}

  \put(150,60){\makebox(0,0){\footnotesize{$\mbox{where the upper-left corner is $(-13,13)$}$}}}

  \end{picture}

Since $(E^\infty_{*,*})_{48}$ has rank one supported at $(-d,q)=(-13,13)$, we get that the differential
$(d^1_{*,*})_{48}$ is nonzero, but all the other differentials $(d^k_{*,*})_{48}$, $k\geq 2$, are zero.

Next, we compute all the entries of $EP_1(T,Q,\h)$.
Consider an arbitrary 2-cube $\square$ with vertices $A=l$, $B=l+E_1$, $C=l+E_2$, $D=l+E_1+E_2$. A computation shows that
\begin{equation}\label{eq:matr}
\chi(B)+\chi(C)=\chi(A)+\chi(D)+1.
\end{equation}
The non-zero terms of $PE_1(T,Q,\h)$ can be localized  in certain cubes.
One can distinguish exactly  two cases (where the vertices are $A=l, B,C,D$).  They are the following:

\vspace{1mm}

(a) There exists $m\in\Z$ such that
 $\chi(A)\leq  m$, and both $\chi(B), \chi(C)>m$. Note that in this case by (\ref{eq:matr}) $\chi(D)>m$ too.
In this case the contribution in $EP_1(T,Q,\h)$ is $T^{|l|}Q^n\h^0$  for all $\chi(A)\leq n< \min\{\chi(B),\chi(C)\}$.

(b) There exists $m\in\Z$ such that
$\chi(A),\ \chi(B), \ \chi(C)\leq m$ but $\chi(D)>m$. In this case the
contribution in $EP_1(T,Q,h)$ is $T^{|l|}Q^n\h^1$  for all $\max\{\chi(B),\chi(C)\}\leq n< \chi(D)$.

\vspace{1mm}

In fact, in both cases $\chi(B)>\chi(A)$ and $\chi(C)>\chi(A)$ (in case (b) use
$\chi(B)-\chi(A)=\chi(D)-\chi(C)+1\geq 2$).
This means that in all these cases $A=l\in\calS$.

Conversely, if $l\in \calS=\{(l_1,l_2)\in\Z^2\,:\,
-2l_1+l_2\leq 0,\ l_1-2l_2\leq 0\}$ then
$$\chi(A)<\min\{\chi(B),\chi(C)\}\leq \max\{\chi(B),\chi(C)\}\leq \chi(D).$$
In conclusion:
\begin{equation}\label{eq:ep1}
EP_1(T,Q,\h)=\sum_{l\in \calS}\ E(l), \ \ \mbox{where $E(l)$ equals }
\end{equation}
\begin{equation*}   \begin{split}
&  T^{|l|}\h^0\big( Q^{\chi(l)}+\cdots + Q^{\min\{\chi(l+E_1),\chi(l+E_2)-1\}}\big)
+T^{|l|}\h^1\big(  Q^{\max\{\chi(l+E_1),\chi(l+E_2)\}}  +\cdots + Q^{\chi(l+E_1+E_2)-1}\big)\\
=&
T^{|l|}\h^0\cdot \frac{Q^{\chi(l)}- Q^{\min\{\chi(l+E_1),\chi(l+E_2)\}}}{1-Q}
+T^{|l|}\h^1\cdot \frac{  Q^{\max\{\chi(l+E_1),\chi(l+E_2)\}} - Q^{\chi(l+E_1+E_2)}}{1-Q}.\end{split}
\end{equation*}

From the expression ${\rm E}(l)$ one can eliminate the $\max$ and $\min$ symbols rewriting it in the two regions
$l_1\geq l_2$ and $l_1\leq l_2$. The computation of the new expressions are left to the reader.

As a $\Z[U]$ module, $\oplus (E^1_{*,*})_*$ has a direct sum decomposition into two summands, these are:
$$\bigoplus_{n,d}(E^1_{-d,d})_n=\bigoplus_{l\in \calS} \, \calt_{-2\chi(l)}\Big(
\min\{\chi(l+E_1),\chi(l+E_2)\}-\chi(l)\Big), $$
$$\bigoplus_{n,d}(E^1_{-d,d+1})_n=\bigoplus_{l\in \calS} \, \calt_{-2\chi(l)}\Big(
\chi(l+E_1+E_2))-\max\{\chi(l+E_1),\chi(l+E_2)\}\Big). $$
Note also that  $n$ does not determine the semigroup element $l$.

Finally, we claim that $E^2=E^\infty$, i.e. $k_{(\Gamma_\phi,s)}=2$.
Indeed,
analysing the geometric meaning (basically the definition) of the spectral sequence,
the claim reduces to the  verification of the fact that there exists no weight $n$
and lattice point $l$ such that
$l, \ l+2E_1, \ l+2E_2$ are in $S_n$ but $l+E_1+E_2\not\in S_n$. (Compare with the arguments from Example \ref{ex:latNV1}.)
\end{example}

 \begin{example}\label{ex:237} Here we exemplify with
 some additional  details the Filtered Reduction Theorem.

 Consider the following minimal good resolution graph.
 The link is the integral homology sphere $\Sigma(2,3,7)$, so $H=L'/L=0$:
 we take $h=0$, $k_h=-Z_K$ and $\chi(l)=(l,l-Z_K)/2$.

\begin{picture}(300,45)(0,0)
\put(150,25){\circle*{4}}
\put(175,25){\circle*{4}}
\put(200,25){\circle*{4}}
\put(175,5){\circle*{4}}
\put(150,25){\line(1,0){50}}
\put(175,25){\line(0,-1){20}}
\put(100,25){\makebox(0,0){$\Gamma:$}}
\put(150,35){\makebox(0,0){$-2$}}
\put(175,35){\makebox(0,0){$-1$}}
\put(200,35){\makebox(0,0){$-3$}}
\put(185,5){\makebox(0,0){$-7$}}
\end{picture}

 Let $E_0\in L$ be the base cycle associated with the  central $(-1)$ vertex.
We set $s=E^*_0\in \calS$, and filter $\bH_0$ via the induced filtration $\{\frX_{-d}\}_d$. (This is the most natural filtration:
in the analytic realization of $\Gamma$ as a
weighted homogeneous singularity $\{x^2+y^3+z^7=0\}\subset \C^3$,  the corresponding analytic divisorial
filtration associated with $E_0$  coincides with the filtration of the
 local graded ring provided by the $\C^*$--action.)

Note that $\Gamma$ is an AR-graph where  $E_0$ is  an SR set, cf. \ref{ss:redth}.
 For any $l'\in L'$ let
 $m_0(l')$ be the coefficient  of $E_0$ in $l'$. Then for any $\bar{l}\in\Z_{\geq 0}$ there exists a
 unique minimal cycle $x(\bar{l}) \in L$ such that $m_0(x(\bar{l}))=\bar{l}$ and
 $(E_v,  x(\bar{l}))\leq 0$ for any $E_v\not=E_0$, see \cite{NOSz,Nkonyv} or subsection \ref{ss:redth} here.
The cycle $x(\bar{l})$ satisfies another universal property too:
for any $l\in L$ with $m_0(l)=\bar{l}$ one has $\chi(l)\geq \chi(x(\bar{l}))$, cf. \cite[Prop. 7.3.28]{Nkonyv}.
Moreover, $\bar{l}\mapsto x(\bar{l})$ is a sequence of increasing cycles.

Finally, one defines $\overline{w}(\bar{l}):=\chi(x(\bar{l}))$, and by `reduction theorem' (see Theorem 7.3.37 and
Theorem 11.3.5 from \cite{Nkonyv} or subsection \ref{ss:redth} here)
$\bH_*(\frX,\chi)$ agrees with  the lattice homology of $\Z_{\geq 0}\subset \R_{\geq 0}$ associated with the weight function $\bar{l}\mapsto \overline{w}(\bar{l})$.

The new weight function $\bar{l}\mapsto \overline{w}(\bar{l})$ can be computed inductively as follows
(see again \cite{NOSz} or \cite[\S 11.3.A]{Nkonyv}):
$\overline{w}(\bar{l}+1)-\overline{w}(\bar{l})=1+ N(\bar{l})$, where
$N(\bar{l})=\bar{l}-\lceil \bar{l}/2\rceil-\lceil \bar{l}/3\rceil
-\lceil \bar{l}/7\rceil$, i.e. $\tau(0)=0$ and
$$\overline{w}(\bar{l})=\sum_{j=0}^{\bar{l}-1} \big(1+j-\lceil j/2\rceil-\lceil j/3\rceil
- \lceil j/ 7\rceil\big).$$
It is a quasi (or, periodic) quadratic function.  The $\overline{w}$--values for
 $0\leq \bar{l}\leq 8$ are 0, 1, 0, 0, 0, 0, 0, 1, 1.
The function $\bar{l}\mapsto \overline{w}(\bar{l})$ is non-decreasing for $\bar{l}\geq 2$.
The lattice homology is given by
 $\bH_0(\frX, w)=\calt^-_0\oplus \calt_0(1)$  and $\bH_{>0}(\frX, w)=0$. The homological graded root is

\begin{picture}(300,62)(80,330)

\put(180,380){\makebox(0,0){\footnotesize{$0$}}}
\put(177,370){\makebox(0,0){\footnotesize{$-1$}}}
\put(177,360){\makebox(0,0){\footnotesize{$-2$}}}
\dashline{1}(200,370)(240,370)
\dashline{1}(200,380)(240,380)
\dashline{1}(200,360)(240,360)
\put(220,345){\makebox(0,0){$\vdots$}} \put(220,360){\circle*{3}}
\put(210,380){\circle*{3}}
\put(230,380){\circle*{3}} 
\put(220,370){\circle*{3}} \put(220,370){\line(0,-1){20}}
\put(220,370){\line(1,1){10}} \put(210,380){\line(1,-1){10}}



\end{picture}


Since the reduced rank is one, the spectral sequence degenerates at $E^1$ level.

The graded $\Z[U]$--modules  ${\rm F}_{-d}\bH_0(\frX)$ for $d\geq 0$ are determined  as follows.

$S_0$ has two components, one of them, $S_0'$, consists of the  lattice point $0$, the other one, $S_0''$,
has more lattice points, e.g. $x(\bar{l})$ for $2\leq \bar{l}\leq 6$.
The submodule $F_{-1}\bH_0$ is obtained from $\bH_0$ by deleting the
component $S_0'$, hence $F_{-1}\bH_0\simeq \Z[U]$, where the generator 1 corresponds to the
component $S_0''$.
From the above values
$\overline{w}(\bar{l})$ one obtains  $F_{-d}\bH_0=F_{-1}\bH_0$ for $1\leq d\leq 6$.

Assume next that for $d>1$ and  $n>0$ we have  $S_n \cap \frX_{-d}\not=\emptyset$
(while in the case of  $n=0$ we have the similar assumption  $S''_0\cap \frX_{-d}\not=\emptyset$).
Then there exists $l\in L$ such that $\chi(l)\leq n$ and $m_0(l)\geq d$. But,
from the universal property of $x(\bar{l})$  and from the monotonicity of $\overline{w}(\bar{l})$
 we also have $n\geq \chi(l)\geq \chi(x(m_0(l)))\geq
 \chi(x(d))$, hence $x(d)\in S_n \cap \frX_{-d}$ too. That is, the condition
 $S_n \cap \frX_{-d}\not=\emptyset$ can be tested by the cycles of type $x(\bar{l})$. In particular,
$$\max\{d\,:\, S_n\cap \frX_{-d}\not=\emptyset\}=\max\{d\,:\, \overline{w}(d)\leq n\}.$$
E.g., this value for  $S_0''$ is $d=6$, for $S_1$ it is $d=12$,  for $S_2$ it is $d=14$.

In particular, in order to get $F_{-d}\bH_0$ we have to delete those components from
$\bH_0$ (vertices from the graded root associated with $S_n$)
which satisfy $n<\overline{w}(d)$. (Recall also that $(-n)$ is the weight of the vertex in the graded root.)
 The graded $\Z[U]$--modules  ${\rm F}_{-d}\bH_0$ for $d>0$ are illustrated below
via their graded root.
 From the root of $\bH_0$ one has to delete those edges which intersect
the `cutting line'. The $\Z[U]$--module  ${\rm F}_{-d}\bH_0$ sits below the cutting line,
 where the $U$--action is determined from the remaining edges by the usual principle described above.

\begin{picture}(500,90)(-170,310)

\put(-180,330){\vector(1,0){160}}
\put(-170,320){\vector(0,1){60}}
\put(-170,385){\makebox(0,0){\footnotesize{$\overline{w}$}}}
\put(-20,340){\makebox(0,0){\footnotesize{$\bar{l}$}}}
\put(-170,330){\line(1,1){10}}
\put(-160,340){\line(1,-1){10}}
\put(-150,330){\line(1,0){40}}
\put(-110,330){\line(1,1){10}}
\put(-100,340){\line(1,0){50}}
\put(-50,340){\line(1,1){10}}\put(-40,350){\line(1,0){10}}
\put(-30,350){\line(1,1){10}}
\put(-160,320){\makebox(0,0){\footnotesize{$1$}}}
\put(-150,320){\makebox(0,0){\footnotesize{$2$}}}
\put(-110,320){\makebox(0,0){\footnotesize{$6$}}}
\put(-100,320){\makebox(0,0){\footnotesize{$7$}}}
\put(-51,320){\makebox(0,0){\footnotesize{$12$}}}
\put(-39,320){\makebox(0,0){\footnotesize{$13$}}}

\put(-170,330){\circle*{3}}
\put(-160,340){\circle*{3}}
\put(-150,330){\circle*{3}}
\put(-140,330){\circle*{3}}
\put(-130,330){\circle*{3}}
\put(-120,330){\circle*{3}}
\put(-110,330){\circle*{3}}
\put(-100,340){\circle*{3}}
\put(-90,340){\circle*{3}}
\put(-80,340){\circle*{3}}
\put(-70,340){\circle*{3}}
\put(-60,340){\circle*{3}}
\put(-50,340){\circle*{3}}
\put(-40,350){\circle*{3}}
\put(-30,350){\circle*{3}}
\put(-20,360){\circle*{3}}

\put(40,345){\makebox(0,0){$\vdots$}} \put(40,360){\circle*{3}}
\put(30,380){\circle*{3}}
\put(50,380){\circle*{3}} 
\put(40,370){\circle*{3}} \put(40,370){\line(0,-1){20}}
\put(40,370){\line(1,1){10}} \put(30,380){\line(1,-1){10}}

\put(130,345){\makebox(0,0){$\vdots$}} \put(130,360){\circle*{3}}
\put(120,380){\circle*{3}}
\put(140,380){\circle*{3}} 
\put(130,370){\circle*{3}} \put(130,370){\line(0,-1){20}}
\put(130,370){\line(1,1){10}} \put(120,380){\line(1,-1){10}}

\put(220,345){\makebox(0,0){$\vdots$}} \put(220,360){\circle*{3}}
\put(210,380){\circle*{3}}
\put(230,380){\circle*{3}} 
\put(220,370){\circle*{3}} \put(220,370){\line(0,-1){20}}
\put(220,370){\line(1,1){10}} \put(210,380){\line(1,-1){10}}

%

\put(40,320){\makebox(0,0){\footnotesize{$d=1,2,\ldots, 6$}}}
\put(130,320){\makebox(0,0){\footnotesize{$d=7, 8, \ldots, 12 $}}}
\put(220,320){\makebox(0,0){\footnotesize{$d=13,14$}}}

\put(30,374){\line(3,2){21}}
\put(120,374){\line(1,0){20}}
\put(210,365){\line(1,0){20}}
\end{picture}

In $PE_\infty(T,Q,\h)$ each pair $(\bar{l}, n)$ gives a contribution $T^{\bar{l}}Q^n\h^0$ whenever $\overline{w}(\bar{l})\leq n<\overline{w}(\bar{l}+1)$.
Thus,
$$PE_\infty(T,Q,\h)=T^0Q^0+\sum_{\bar{l}\geq 2}\ T^{\bar{l}}\cdot \frac{Q^{\overline{w}(\bar{l})}-Q^{\overline{w}(\bar{l}+1)}}{1-Q}
=1+\frac{1}{1-Q}\Big(T^2+(T-1)\cdot \sum  _{\bar{l}\geq 3}T^{\bar{l}-1}Q^{\overline{w}(\bar{l})}\Big).$$
$PE_\infty(T=1,Q,\h)-1/(1-Q)=1=eu(\bH_*)$.
Note also that $$\lim _{Q\to 1}PE_{\infty}(T,Q,\h)
=1+\sum_{\bar{l}\geq 2}(1+N(\bar{l}))\cdot T^{\bar{l}}=
\sum_{\bar{l}\geq 0}\max\{\,0,1+N(\bar{l})\,\} \cdot T^{\bar{l}}.$$
This expression equals $Z_0(\bt)$ reduced to the variable of the central vertex of the resolution, and it also equals the (analytic)  Poincar\'e series of the graded ring associated with the corresponding analytic weighted homogeneous singularity,
see e.g. \cite[(5.1.70]{Nkonyv}.

All the above discussion can be repeated for any star shaped graph (in which case $N(\bar{l})$ is
computed in terms of the Seifert invariants), or even for
an almost--rational graph, with $E_0$ its unique bad vertex (for more details regarding $N(\bar{l})$ and $\overline{w}(\bar{l})$ see  \cite[11.3]{Nkonyv}), or section \ref{s:rat} here.


 \end{example}

\section{The pages  $E^\infty_{*,*}$ (continuation of the $t=1$ case).} \label{ss:einfty}\

\subsection{The structure theorem}\

One of the goals of the present part is to provide structure theorems for $PE_\infty(T,Q,\h)$.
First we prove that $PE_\infty(T,Q,\h)$,  up to finitely many terms,  can be written as a finite
sum of `unsided'
 Jacobi theta function. Then we compute $PE_\infty(T=1,Q,\h)$ and we relate it with several Euler characteristic type invariants (e.g. with $eu(\bH_*)$).

We start with the following observation.
Let  $d_0\in\Z_{\geq 0}$ be a certain integer. For any $d\leq d_0$, if $n$ is sufficiently large, then $S_n$ is contractible and $S_n\cap \frX_{-d_0-1}\not=\emptyset$, hence
 $S_n\cap \frX_{-d-1}\not=\emptyset$ too. Hence for such $n$ we have
 $F_{-d}\bH_*(\frX)_{-2n}=F_{-d-1}\bH_*(\frX)_{-2n}=\Z$, that is, $(E^\infty_{-d,*})_n=0$.
 In other words, for any  $d\leq d_0$ there are only finitely many integers $n$  such that
  $(E^\infty_{-d,*})_n\not =0$.

For the next discussion let us  choose $d_0$ as follows. First,
choose $n_0$ so that $S_n$ is contractible for any $n\geq n_0$ (cf. Prop. \ref{lem:con}).
Then set $d_0$ so large that $S_{n_0}\cap \frX_{-d_0}=\emptyset$. This means that if $d\geq d_0$ and
$S_n\cap\frX_{-d}\not=\emptyset$, then $S_n$ is contractible.
For such $d_0$,
$PE_\infty(T,Q,\h)$ ---  up to finitely many terms (which correspond to $d<d_0$) ---,
has the form
$\sum_{d\geq d_0} T^dQ^n\h^0$. Here a pair $(d,n)$ contributes with $T^dQ^n$  in the sum
if and only if  $S_n\cap \frX_{-d}\not=\emptyset$ and  $S_n\cap \frX_{-d-1}=\emptyset$.

Clearly, this fact remains valid if we replace $d_0$ by any larger integer. In the final choice of $d_0$ we will take into account several additional  properties too, which will be introduces  next.

Since $l\mapsto -(s,l)$ is linear, its image is a subgroup of $\Z$, say $e\Z$ with $e>0$.
In particular,
$S_n\cap \frX_{-ed-a}=S_n\cap \frX_{-ed-e}$ for any $1\leq a\leq e$,
hence ${\rm Gr}^F_{-d}(\bH_*)_*$ is nonzero only for $d\in e\Z$.

Choose also $d_0$ in the form  $e\bar{d}_0$. For any $e\bar{d}\geq e \bar{d}_0$ set
$\min(e\bar{d}):=\min\{\chi_h(l)\,:\, l\in\frX_{-e\bar{d}}\}$. Then automatically
 $\min(e(\bar{d}+1))\geq \min(e\bar{d})$.
 If  $\min(e(\bar{d}+1))> \min(e\bar{d})$ then the monomials in $PE_\infty$ containing $T^{e\bar{d}}$ are the
 following:
$$ T^{e\bar{d}}(Q^{\min(e\bar{d})}+\cdots +Q^{\min(e(\bar{d}+1))-1})=
T^{e\bar{d}}\cdot \frac{Q^{\min(e\bar{d})}-Q^{\min(e(\bar{d}+1))}}{1-Q}.$$
 The right hand side gives the right answer even if  $\min(e(\bar{d}+1))= \min(e\bar{d})$.

For the next formula it is convenient to choose $d_0=e\bar{d}_0$ with the following additional  arithmetical property.
Let $N\in\Z_{>0}$ be the smallest integer  with $\tilde{s}:=N\cdot s\in L$. Let $ep:=-(s,\tilde{s})$. Then we chose $\bar{d}_0$ in the form
$\bar{d}_0=k_0p-1$ for some $k_0\in \Z_{>0}$. For $\bar{d}\geq \bar{d}_0+1$ we write $\bar{d}$ as $kp+q$ with $k\geq k_0$ and $0\leq q<p$.

With all these notations we have that $PE_\infty(T,Q,\h)$, up to finitely many terms,
has the form
\begin{equation*}
\sum_{\bar{d}\geq \bar{d}_0}T^{e\bar{d}}\cdot \frac{Q^{\min(e\bar{d})}-Q^{\min(e(\bar{d}+1))}}{1-Q}=
\frac{1}{1-Q} T^{d_0} Q^{\min(d_0)}+\frac{T^e-1}{T^e(1-Q)} \cdot \sum_{q=0}^{p-1}\
\sum_{k\geq k_0}\  T^{e(kp+q)}Q^{\min (e(kp+q))}.\end{equation*}
A priori, the map $\bar{d}\mapsto \min( e\bar{d})$ might be very irregular, chaotic. However, we
will show that this is not the case, we will find a rather interesting  regularity property.
We start with the following lemma.

\begin{lemma}\label{lem:induc}
For any $k\geq k_0$ and $0\leq q<p$ \ define $\delta(q,k)\in\Z_{\geq 0}$ by
$$\delta(q,k):=\min\{-(s, l_{q,k})-e(kp+q)\,:\ \mbox{where $l_{q,k}$ realizes $\min(e(kp+q))$}\}.$$
Then for any fixed $q$, the map $k\mapsto \delta(q,k)$ is non-increasing.
\end{lemma}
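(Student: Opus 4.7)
The plan is to recast everything in terms of the auxiliary function
$$\mu(c) := \min\{\chi_h(l) : l \in L,\ -(s,l) = c\}, \qquad c \in e\Z_{\geq 0},$$
and to exploit the fact that translation by $\widetilde{s}$ acts on each level set $\{-(s,\cdot) = c\}$ as an additive shift of $\chi_h$. First, by enlarging $k_0$ if necessary (without spoiling the earlier requirements on $\bar{d}_0 = k_0p-1$), I may assume that for every $c \geq e k_0 p$ the continuous minimizer of $\chi_h$ on the affine hyperplane $(s, l) = -c$ lies deep in the interior of $\frX$, at distance exceeding $\|\widetilde{s}\|$ from $\partial \frX$. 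Indeed this continuous minimizer is $-k_h/2 + \bigl((c - (s, k_h)/2)/(-(s,s))\bigr)\cdot s$, and since $s \in \calS'\setminus\{0\}$ has strictly positive coordinates, each of its components tends to $+\infty$ with $c$. Consequently every integer realizer of $\mu(c)$ lies in $\frX \cap L$, and both $l + \widetilde{s}$ and $l - \widetilde{s}$ remain in $\frX$.

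A direct expansion gives $\chi_h(l + \widetilde{s}) - \chi_h(l) = -(l, \widetilde{s}) - \tfrac{1}{2}(\widetilde{s}, \widetilde{s} + k_h)$, which depends on $l$ only through $(l, \widetilde{s}) = N(l, s)$. Thus on the level set $\{-(s, l) = c\}$ this difference is the \emph{constant} $Nc - \tfrac{1}{2}(\widetilde{s}, \widetilde{s} + k_h)$. Combining this with the bijection $l \leftrightarrow l + \widetilde{s}$ between $\{-(s,l) = c\} \cap L$ and $\{-(s,l) = c+ep\} \cap L$, which restricts to a bijection between the $\frX$-valued points by the preceding paragraph, yields the key identity
$$\mu(c + ep) = \mu(c) + Nc - \tfrac{1}{2}(\widetilde{s}, \widetilde{s} + k_h), \qquad c \geq e k_0 p.$$

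For $d = e(kp + q)$ with $k \geq k_0$, the identity implies that $\mu$ is strictly increasing along each residue class modulo $ep$, so $\min(d) = \min_{c \geq d,\, c \in e\Z}\mu(c)$ is realized at some $c \in \{d, d + e, \ldots, d + (p-1)e\}$. Setting $B_j(k) := \mu(d + je)$ for $j \in \{0, 1, \ldots, p-1\}$ and $j^*(k) := \mathrm{argmin}_j B_j(k)$, the lattice points realizing $\delta(q, k)$ are precisely the integer minimizers of $\chi_h$ on $\{-(s, l) = d + j^*(k) \cdot e\}$, and $\delta(q, k) = e \cdot j^*(k)$.

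The monotonicity of $k \mapsto \delta(q, k)$ thus reduces to showing $j^*(k+1) \leq j^*(k)$, and this follows by a standard exchange argument. From the key identity,
$$B_j(k+1) - B_j(k) = \mu(d + je + ep) - \mu(d + je) = Ne(kp + q) + Nej - \tfrac{1}{2}(\widetilde{s}, \widetilde{s} + k_h),$$
which is \emph{strictly increasing} in $j$ with slope $Ne > 0$. If $j_0 := j^*(k)$ and $j_1 := j^*(k+1)$ satisfied $j_1 > j_0$, then adding the defining inequalities $B_{j_0}(k) \leq B_{j_1}(k)$ and $B_{j_1}(k+1) \leq B_{j_0}(k+1)$ would yield $B_{j_1}(k+1) - B_{j_1}(k) \leq B_{j_0}(k+1) - B_{j_0}(k)$, contradicting strict monotonicity in $j$. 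Hence $j^*(k+1) \leq j^*(k)$, and the lemma follows. The main delicate point is ensuring that the first-quadrant constraint is inactive: otherwise $l - \widetilde{s}$ could leave $\frX$, the bijection underlying the key identity would break, and one would be forced to use a one-sided inequality that is too weak for the exchange argument.
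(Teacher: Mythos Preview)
Your argument is sound and takes a genuinely different route from the paper. The paper's proof is a direct translation: given a degree-minimal realizer $l_{q,k_1}$ of $\min(e(k_1p+q))$, set $\bar l_{q,k_2}:=l_{q,k_1}+(k_2-k_1)\tilde s$; for any $\bar l\in\frX_{-e(k_2p+q)-\delta(q,k_1)}$ write $\bar l=l+(k_2-k_1)\tilde s$ and expand
\[
\chi_h(\bar l)-\chi_h(\bar l_{q,k_2})=\big[\chi_h(l)-\chi_h(l_{q,k_1})\big]+(k_2-k_1)N\cdot\big(-(s,l-l_{q,k_1})\big),
\]
both summands being nonnegative (the first by minimality of $l_{q,k_1}$, the second because $-(s,l)\ge -(s,l_{q,k_1})$). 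This forces $\bar l_{q,k_2}$ to realize $\min(e(k_2p+q))$ with degree $e(k_2p+q)+\delta(q,k_1)$, giving $\delta(q,k_2)\le\delta(q,k_1)$ in one stroke. Your route via the level-set minimum $\mu(c)$, its exact recurrence $\mu(c+ep)-\mu(c)=Nc-\tfrac12(\tilde s,\tilde s+k_h)$, and the exchange argument on $j^*(k)$ is more structural: it makes the periodicity that drives Theorem~\ref{th:peinfty} explicit from the start, and the eventual constancy of $\delta(q,\cdot)$ becomes transparent. The paper's argument is shorter; yours gives more information.

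Two caveats. First, by enlarging $k_0$ you prove non-increase only for $k\ge k_0'$ with some $k_0'\ge k_0$, which is literally weaker than the lemma as stated (though sufficient for its sole use, since the paper enlarges $k_0$ again immediately afterward). Second, the inference ``continuous minimizer deep in $\frX$ $\Rightarrow$ every integer realizer of $\mu(c)$ lies in $\frX$'' is not automatic: you need the routine observation that the cosets $L\cap\{-(s,\cdot)=c\}$ are all translates of one fixed sublattice, so the lattice minimizer of the restricted positive-definite quadratic stays within a $c$-independent bounded distance of the continuous one. Finally, your key identity already holds unconditionally over $L$; restricting to ``$\frX$-valued points'' is needed only to match $\min_{c\ge d}\mu(c)$ with $\min(d)$, not for the recurrence itself.
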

\begin{proof}
Set  $k_2\geq k_1\geq k_0$ and assume that $l_{q,k_1}$  realises  $\min(e(k_1p+q))$ with
$-(s, l_{q,k_1})-e(k_1p+q)=\delta(q,k_1)$. This means that $\chi_h(l_{q,k_1})\leq \chi_h(l)$ for any
$l$ with $-(s,l)\geq e(k_1p+q)$.
We rewrite $\chi_h(l_{q,k_1})\leq \chi_h(l)$ as
$\chi_h(l_{q,k_1})\leq \chi_h(l-l_{q,k_1}+l_{q,k_1})=
 \chi_h(l-l_{q,k_1})+\chi_h(l_{q,k_1})-(l-l_{q,k_1},l_{q,k_1})$, or
\begin{equation}\label{eq:chiin}
\chi_h(l-l_{q,k_1})-(l-l_{q,k_1}, l_{q,k_1})\geq 0.\end{equation}
Set $\bar{l}_{q,k_2}:=l_{q, k_1}+(k_2-k_1)\tilde{s}$ \ and choose any $\bar{l}\in \frX_{-e(k_2p+q)-\delta(q,k_1)}$.
Write $\bar{l}$ as $l+(k_2-k_1)\tilde{s}$. Then  $l\in \frX_{-e(k_1p+q)-\delta(q,k_1)}$. Then, by the constructions,
$-(s, l-l_{q, k_1})\geq 0$.
Thus, using this and (\ref{eq:chiin}),
\begin{equation*}\begin{split}
\chi(\bar{l})&= \chi_h(\bar{l}_{q,k_2})+\chi_h(\bar{l}-\bar{l}_{q,k_2})-(\bar{l}-\bar{l}_{q,k_2},\bar{l}_{q,k_2})\\
&=\chi_h(\bar{l}_{q,k_2})+\chi_h(l-l_{q,k_1})-(l-l_{q,k_1},l_{q,k_1}+(k_2-k_1)Ns)\geq \chi_h(\bar{l}_{q,k_2}).
\end{split}\end{equation*}
Hence $\min(e(k_2p+q))$ can be realized by some
`smallest degree'   $l_{q,k_2}$ with degree  $-(s, l_{q,k_2})\leq -(s,\bar{l}_{q,k_2})=e(k_2p+q)+\delta(q,k_1)$.
Hence $\delta(q,k_2)\leq \delta(q,k_1)$.
\end{proof}
Since $\delta(q,k)\geq 0$, by  the above lemma $k\mapsto \delta(q,k)$ stabilizes. Let us choose $d_0$ (additionally to
 all the above restriction) so large that for any $q$ the map $k\mapsto \delta(q,k)$ is constant for $k\geq k_0$.

For any fixed  $0\leq q<p$ let $l_q$ be a lattice point  for which $\min(k_0p+q)$ is realized and $-(s, l_q)=e(k_0p+q)+\delta(k_0,q)$.
Then, by the above discussion,  for any $k\geq k_0$ the cycle $l_q+(k-k_0)\tilde{s}$ \
 realizes $\min(kp+q)$ for any $k\geq k_0$.
If we substitute this in the perviously proved expression  we
obtain the  following formula  (where we write $m:=k-k_0$).

\begin{theorem}\label{th:peinfty} There exist lattice points $\{l_q\}_{q=0}^{p-1}$ such that for $d_0=k_0p-1$ sufficiently large
$PE_\infty(T,Q,\h)$ --- up to finitely many terms ---
has the form
\begin{equation*}
\frac{T^{d_0}}{1-Q}\, \cdot  \Big[\, Q^{\min(d_0)}+(T^e-1) \cdot \sum_{q=0}^{p-1}\ T^{eq}Q^{\chi_h(l_q)}\,
\sum_{m\geq 0}\  T^{emp}Q^{\chi_h(l_q+m\tilde{s})-\chi_h(l_q)}\,\Big].\end{equation*}
Note also that $emp=-(s,m\tilde{s})$ and  $\chi_h(l_q+m\tilde{s})-\chi_h(l_q)$ is the following quadratic function in $m$:
$$\chi_h(l_q+m\tilde{s})-\chi_h(l_q)=-(m\tilde{s}, m\tilde{s}-Z_K+2s_h+2l_q)/2.$$
\end{theorem}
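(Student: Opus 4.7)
The plan is to assemble the three ingredients already prepared in the excerpt: (i) the rewriting of $PE_\infty(T,Q,\h)$ in the finitely-many-terms form
\[
\frac{T^{d_0} Q^{\min(d_0)}}{1-Q}+\frac{T^e-1}{T^e(1-Q)}\cdot \sum_{q=0}^{p-1}\sum_{k\geq k_0}T^{e(kp+q)}Q^{\min(e(kp+q))},
\]
(ii) Lemma \ref{lem:induc}, and (iii) the elementary quadratic identity for $\chi_h$. The remaining work is then purely bookkeeping, plus one modest stability argument.

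First I would enlarge $d_0=e\bar{d}_0=e(k_0 p -1)$ sufficiently so that, in addition to all the properties already imposed ($S_n$ contractible and $\min$ well-defined on $\frX_{-e\bar{d}}$ for $\bar{d}\geq \bar{d}_0$), the non-increasing $\Z_{\geq 0}$-valued sequence $k\mapsto \delta(q,k)$ of Lemma \ref{lem:induc} has stabilized for every $q\in\{0,\ldots,p-1\}$ and all $k\geq k_0$. This is possible since there are only $p$ such sequences and each eventually becomes constant. For each $q$ fix an $l_q$ realising $\min(e(k_0 p+q))$ with $-(s,l_q)=e(k_0 p +q)+\delta(q,k_0)$.

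Second, the central claim to verify is that for every $k\geq k_0$ the translate $l_q+(k-k_0)\tilde{s}$ realises $\min(e(kp+q))$. For this I would re-run the argument in the proof of Lemma \ref{lem:induc}: starting from the inequality $\chi_h(l-l_q)-(l-l_q,l_q)\geq 0$ for any $l\in \frX_{-e(k_0p+q)-\delta(q,k_0)}$, and using the identity $\chi_h(x+y)=\chi_h(x)+\chi_h(y)-(x,y)$, the constancy of $\delta(q,k)$ forces any minimiser at level $k$ to have $\chi_h$-value exactly $\chi_h(l_q+(k-k_0)\tilde{s})$. Hence we may take this translate as the distinguished minimiser in the formula.

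Third, writing $m:=k-k_0$ and substituting $\min(e(kp+q))=\chi_h(l_q+m\tilde{s})$ into the intermediate expression produces
\[
\frac{T^{d_0}}{1-Q}\Big[Q^{\min(d_0)}+(T^e-1)\sum_{q=0}^{p-1}T^{eq}Q^{\chi_h(l_q)}\sum_{m\geq 0}T^{emp}Q^{\chi_h(l_q+m\tilde{s})-\chi_h(l_q)}\Big],
\]
after absorbing the shift $k=k_0+m$ into the prefactor $T^{d_0}$ and using $-(s,m\tilde{s})=emp$ (which is exactly how $p$ was defined). Finally the quadratic-in-$m$ identity
\[
\chi_h(l_q+m\tilde{s})-\chi_h(l_q)=-\tfrac{1}{2}(m\tilde{s},\,m\tilde{s}+k_h+2l_q)=-\tfrac{1}{2}(m\tilde{s},\,m\tilde{s}-Z_K+2s_h+2l_q)
\]
is a direct unpacking of $\chi_h(x)=-\tfrac{1}{2}(x,x+k_h)$ together with $k_h=-Z_K+2s_h$.

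The main obstacle is really concentrated in step two: one must be sure that the stabilised value $\delta(q,k_0)$ for $k\geq k_0$ is genuinely attained by the rigid translation $l_q+(k-k_0)\tilde{s}$, rather than by some other, $k$-dependent, minimiser of possibly larger degree. The argument above hinges on the fact that if some other minimiser existed with strictly smaller $-(s,\cdot)$-degree, it would contradict the definition of $\delta(q,k_0)$ after translating back by $-(k-k_0)\tilde{s}$. Once this point is locked down, the remainder of the proof is straightforward substitution.
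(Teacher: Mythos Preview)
Your proposal is correct and follows essentially the same route as the paper: the paper also enlarges $d_0$ until each $k\mapsto\delta(q,k)$ has stabilised, fixes $l_q$ as a minimiser at level $k_0$ with $-(s,l_q)=e(k_0p+q)+\delta(q,k_0)$, and then invokes the computation inside the proof of Lemma~\ref{lem:induc} to conclude that the translate $l_q+(k-k_0)\tilde{s}$ realises $\min(e(kp+q))$ for all $k\geq k_0$, after which the substitution $m=k-k_0$ and the quadratic identity for $\chi_h$ finish the job. Your ``main obstacle'' paragraph is exactly the right point, though note that the contradiction with a smaller-degree minimiser is immediate from the definition of $\delta(q,k)$ being constant (no translation back is actually needed): the positive content of re-running Lemma~\ref{lem:induc} is that the translate achieves the minimum on $\frX_{-e(kp+q)-\delta}$, and constancy of $\delta$ ensures the true minimiser lives there.
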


\begin{remark} Write $s\in\calS'\setminus\{0\}$ as $\sum_i n_iE_i^*$  for certain $n_i\in\Z_{\geq 0}$.

(a) The integer  $e$ used  above equals ${\rm gcd}_i\{n_i\}$.

(b) Assume that  $n_i\in\{0,1\}$ for all $i$, in particular $e=1$.
 Suppose  that above we choose $d_0$ so large that $\min(d)>0$  for any $d\geq d_0$.
Write ${\rm Min}(d):= \min\{\chi_h(l)\,:\, -(s,l)=d\}$.  Note that  for any fixed $d=kp+q$ (see previous proof)
$\delta(q,k)=\min\{d'-d\,:\, d'\geq d,\ {\rm Min}(d')=\min(d)\}$. In particular,
$\delta(q,k)=0$ if and only if $\min(d)$ is realized on $\{-(s,l)=d\}$.

We claim that under the above assumption
  ${\rm Min}(d+1)\geq {\rm Min}(d)$ for any $d\geq d_0$ (hence
   $\delta(q,k)=0$ in the above proof of Theorem \ref{th:peinfty}).
   Indeed, if ${\rm Min}(d+1) <{\rm Min}(d)$ for some $d$, then take $n:={\rm Min}(d+1)$.
Then $0\in S_n$ and there exists $l\in S_n$ with $-(s,l)=d+1$. However, $l$ cannot be connected with 0 in $S_n$  by a path  since there exists no cycle $\bar{l}\in S_n$ with $-(s,\bar{l})=d$ (and any
connecting  path should have at least one element $\bar{l}$ with $-(s,\bar{l})=d$).  This contradicts the contractibility of $S_n$.

In particular, the cycles $l_q$  in Theorem \ref{th:peinfty} can be chosen with $-(s,l_q)=k_0p+q$.
\end{remark}

\begin{corollary}\label{cor:fin}
Up to finitely many  terms, $PE_\infty(T=1,Q,\h)$ has the form $1/(1-Q)$.
\end{corollary}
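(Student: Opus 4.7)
The plan is to deduce this corollary directly from Theorem \ref{th:peinfty} by specializing $T=1$. Since the theorem already gives a clean closed form for $PE_\infty(T,Q,\h)$ up to finitely many terms, essentially all the work has been done; what remains is to observe which pieces of the formula survive the specialization.

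Concretely, I would plug $T=1$ into the bracketed expression
\[
Q^{\min(d_0)} + (T^e-1) \cdot \sum_{q=0}^{p-1} T^{eq}Q^{\chi_h(l_q)} \sum_{m\geq 0} T^{emp}Q^{\chi_h(l_q+m\tilde{s})-\chi_h(l_q)}.
\]
The factor $(T^e-1)$ evaluates to $0$ at $T=1$, which annihilates the entire Jacobi-theta-type double sum indexed by $(q,m)$ — this is the key collapse. (One should note that the inner sum over $m$ is a formal power series in $Q$ with integer exponents bounded below, so setting $T=1$ term-by-term is legitimate and produces a well-defined element of $\Z[[Q]][Q^{-1}]$; no convergence issue arises because each fixed power of $Q$ receives only finitely many contributions.) What is left from the main term of Theorem \ref{th:peinfty} is therefore precisely $Q^{\min(d_0)}/(1-Q)$.

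Finally, I would reconcile the specialization with the phrase ``up to finitely many terms'' appearing in the hypothesis. The discrepancy between $PE_\infty$ and the explicit formula of Theorem \ref{th:peinfty} is a Laurent polynomial in $T,Q,\h$ (coming from the bounded ranges $d<d_0$ and from the initial stabilisation in Lemma \ref{lem:induc}), so its value at $T=1,\h=\h$ is a Laurent polynomial in $Q$. Likewise, $Q^{\min(d_0)}/(1-Q)$ differs from $1/(1-Q)$ by the polynomial $-(1+Q+\cdots+Q^{\min(d_0)-1})$ when $\min(d_0)\geq 0$, or by a finite sum of negative powers of $Q$ otherwise. Adding these two polynomial corrections yields the claim that $PE_\infty(1,Q,\h)$ agrees with $1/(1-Q)$ up to finitely many monomials in $Q$.

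There is no substantial obstacle here: the statement is essentially a corollary by inspection of Theorem \ref{th:peinfty}, and the only mildly delicate point is checking that the $T=1$ evaluation of the infinite series in $m$ is well-defined coefficient-wise, which follows from the quadratic growth of $\chi_h(l_q+m\tilde{s})$ in $m$ (the $Q$-exponent tends to $+\infty$ as $m\to\infty$, so each fixed power of $Q$ appears in only finitely many summands).
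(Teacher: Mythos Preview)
Your proposal is correct and is exactly the argument the paper intends: the corollary is stated immediately after Theorem~\ref{th:peinfty} with no separate proof, and the intended reasoning is precisely your substitution $T=1$, which kills the $(T^e-1)$ factor and leaves $Q^{\min(d_0)}/(1-Q)$ plus the finitely many omitted terms (those with $d<d_0$, each contributing only finitely many $(n,q)$). Your remark on the well-definedness of the $T=1$ specialization via the quadratic growth of the $Q$-exponent is a welcome clarification that the paper leaves implicit.
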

This theorem describes the asymptotic behaviour of $PE_\infty(T,Q,\h)$ as a finite combinations of sums of type
$\sum_{m\geq 0} T^{-(s,m\tilde{s})}Q^{-(m\tilde{s},m\tilde{s}+k)/2}$. 
But, naturally, the remaining finitely many terms of $PE_\infty$, not covered by this formula but present in
$PE_\infty$, are equally important.
The next group of statement target a complete set of information (under the substitution $T=1$).

\subsection{Properties of $PE_\infty(T=1,Q,\h)$ and  $PE_\infty(T=1,Q,\h=1)$} \

The next Proposition   relates   $PE_\infty(T=1,Q,\h)$  with $\bH_*$.
First recall that for any fixed $b$ and $n$
\begin{equation}\label{eq:pqr}
\sum_{-d+q=b}{\rm rank}\big( E_{-d, q}^\infty\big)_{n}={\rm rank }\, H_b(S_n, \Z)={\rm rank} \,(\bH_b(\frX,w))_{-2n}.
\end{equation}
\begin{proposition}\label{prop:infty} \ 

\noindent (a) $PE_\infty(1, Q,\h)=\sum_{n\geq m_w}\, \big(\, \sum_b \, {\rm rank}\, H_b(S_n,\Z)\,\h^b\, \big)\cdot Q^n.$

\noindent (b) $PE_\infty(1, Q,-1)=\sum_{n\geq m_w}\, \chi_{top}(S_n)\cdot Q^n$ \ (where $\chi_{top}$ denoted the topological Euler characteristic).

\noindent (c) Let $R$ be any  rectangle of type $R(0,c')$ with $c'\geq \lfloor Z_K \rfloor$. Then
$$PE_\infty(1, Q,-1)=\frac{1}{1-Q}\cdot \sum_{\square_q\subset R}\, (-1)^q \, Q^{w_h(\square_q)}.$$
(d) $$\lim_{Q\to 1}\Big( PE_\infty (1,Q, -1)-\frac{1}{1-Q}\Big)=eu(\bH_*(\frX,w_h)).$$
That is, since $PE_\infty (1,Q, -1)-\frac{1}{1-Q}$  is finitely supported (cf.
Corollary \ref{cor:fin}), this expression is
a polynomial in $Q$,  whose value at $Q=1$ is the
normalized Seiberg--Witten invariant of $(M,h*\sigma_{can})$.

\noindent (e) $$PE_\infty(1, Q,\h)\geq PE_\infty(1, Q,\h=0)\geq \frac{1}{1-Q}.$$
\end{proposition}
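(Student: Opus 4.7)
The plan is to derive parts (a)--(e) in sequence, each building on the previous, with the main inputs being the identification of rank from (\ref{eq:pqr}), the stabilization results of Proposition \ref{lem:con}, the Euler characteristic formula (\ref{eq:eu}), and the Seiberg--Witten identification of Theorem \ref{th:euH}(a).

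For (a), I will simply substitute $T=1$ into the definition of $PE_\infty$, regroup the triple sum by the total homological degree $b:=-d+q$, and apply (\ref{eq:pqr}) which asserts $\sum_{-d+q=b}\mathrm{rank}\,(E^\infty_{-d,q})_n = \mathrm{rank}\,H_b(S_n,\Z)$. Part (b) is then immediate by setting $\hbar=-1$, recognizing the alternating sum over $b$ as $\chi_{top}(S_n)$.

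For (c), the first step is to invoke Proposition \ref{lem:con}: when $c'\ge \lfloor Z_K\rfloor$, the inclusion $S_n\cap R(0,c')\hookrightarrow S_n$ is a homotopy equivalence, so $\chi_{top}(S_n)=\chi_{top}(S_n\cap R)$. Since $S_n\cap R$ is the union of cubes $\square_q\subset R$ with $w_h(\square_q)\le n$, its topological Euler characteristic equals $\sum_{q}(-1)^q\#\{\square_q\subset R:\ w_h(\square_q)\le n\}$. Plugging into (b) and swapping the order of summation,
\begin{equation*}
PE_\infty(1,Q,-1)=\sum_{\square_q\subset R}(-1)^q\sum_{n\ge w_h(\square_q)}Q^n=\frac{1}{1-Q}\sum_{\square_q\subset R}(-1)^q Q^{w_h(\square_q)},
\end{equation*}
which is the desired formula.

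For (d), the key observation is that $\sum_{\square_q\subset R}(-1)^q=\chi_{top}(R)=1$ because the rectangle $R$ is contractible. Therefore
\begin{equation*}
PE_\infty(1,Q,-1)-\frac{1}{1-Q}=\frac{1}{1-Q}\sum_{\square_q\subset R}(-1)^q\bigl(Q^{w_h(\square_q)}-1\bigr),
\end{equation*}
which is manifestly a polynomial in $Q$. Letting $Q\to 1$ using $(Q^{w}-1)/(Q-1)\to w$ gives $\sum_{\square_q\subset R}(-1)^{q+1}w_h(\square_q)$. By (\ref{eq:eu}) this equals $eu(\bH_*(R,w_h))$, and by the reduction Proposition \ref{lem:con} (same tower of spaces up to homotopy) it equals $eu(\bH_*(\frX,w_h))$. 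Theorem \ref{th:euH}(a) then identifies this with the normalized Seiberg--Witten invariant. The mild subtlety here is to check that the limit can be computed termwise, but since only finitely many cubes contribute nontrivially (Corollary \ref{cor:fin} guarantees the polynomial nature), this is automatic.

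For (e), the first inequality $PE_\infty(1,Q,\hbar)\geq PE_\infty(1,Q,0)$ is just the observation that all coefficients of $PE_\infty$ (as a formal series in $Q$ and $\hbar$) are non-negative ranks, so setting $\hbar=0$ drops only non-negative terms. For the second inequality, part (a) at $\hbar=0$ gives $PE_\infty(1,Q,0)=\sum_{n\ge m_w}\mathrm{rank}\,H_0(S_n,\Z)\cdot Q^n$; since $0\in\frX$ and $\chi_h(0)=0$, we have $0\in S_n$ whenever $n\ge 0$ (in particular $m_w\le 0$), so $\mathrm{rank}\,H_0(S_n,\Z)\ge 1$ for every $n\ge 0$, yielding $PE_\infty(1,Q,0)-\frac{1}{1-Q}=\sum_{n\ge m_w}\bigl(\mathrm{rank}\,H_0(S_n,\Z)-\mathbf{1}_{n\ge 0}\bigr)Q^n$ with non-negative coefficients. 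I do not expect any serious obstacle; the only point requiring care is the limit argument in (d), where the finite-support property from Corollary \ref{cor:fin} must be invoked to legitimate taking the pointwise limit.
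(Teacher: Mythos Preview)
Your proof is correct and follows essentially the same route as the paper's own (very terse) argument: part (a) via (\ref{eq:pqr}), part (b) immediate from (a), part (c) by combining the homotopy equivalence $S_n\cap R\simeq S_n$ from Proposition~\ref{lem:con} with the cube-count for $\chi_{top}$, part (d) via (\ref{eq:eu}), and part (e) from non-negativity of the ranks together with $S_n\neq\emptyset$ for $n\ge 0$. Your write-up simply spells out the computations (the swap of summation in (c), the limit $(Q^{w}-1)/(Q-1)\to w$ in (d)) that the paper leaves implicit.
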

\begin{proof}
{\it (a)} follows from (\ref{eq:pqr}), while {\it (b)} from {\it (a)}. Next we prove {\it (c)}.

Set  $Eu(Q):=\sum_{\square_q\subset R}\, (-1)^q \, Q^{w_h(\square_q)}\in \Z[Q,Q^{-1}]$ and write
$Eu(Q)/(1-Q)$ as $\sum _{n\geq m_w} a_nQ^n$.
Then
$$a_n=\sum_{\square_q\subset R,\, w_h(\square_q)\leq n}\,(-1)^q=\chi_{top}(S_n\cap R).$$
But $S_n\cap R\hookrightarrow S_n$ is a homotopy equivalence, cf. Proposition \ref{lem:con}. Then use part {\it (b)}.

For {\it (d)} use {\it (c)} and $eu(\bH_*(C,o))=\sum_{\square_q\subset R}(-1)^{q+1}w_h(\square_q)$, cf. (\ref{eq:eu}).
For {\it (e)} use Prop. \ref{lem:con}.
\end{proof}
\begin{remark}\label{rem:hat} (a)
From part {\it (b)-- (c)} of the above proposition
$$\sum_{\square_q\subset R}\, (-1)^q \, Q^{w(\square_q)}=\sum_n\chi_{top}(S_n)(Q^n-Q^{n+1})=\sum_n \chi_{top}(S_n,S_{n-1})
Q^n=\sum_n\sum_b (-1)^b {\rm rank}(\hat{\bH}_b)_{-2n}Q^n.
$$
That is, $\sum_{\square_q\subset R}\, (-1)^q \, Q^{w_h(\square_q)}$ is a finite sum which is independent of the choice of
$R$, and its  categorification is  the bigraded $\hat{\bH}_*$.
The above identity  for $Q=1$ gives (\ref{eq:hateu}).

(b) Property {\it (c)} above  in this  form might be  `misleading': conceptually
the right hand side of the identity is related with
$PE_1$ (see Theorem \ref{th:betti} for the general statement).
However,  after the substitution
$T=1$ and $\h=-1$ one has $PE_1(1,Q,-1)=PE_\infty(1,Q,-1)$, cf. (\ref{eq:spseq}).
\end{remark}

\section{The multigraded $\Z$--module  $(E^1_{-\widetilde{l},q})_{n}$ and the series ${\bf PE}_1({\bf T}, Q,\h)$}\label{ss:PET}\

\subsection{The extended lattice  filtration}\

Write $s=\sum_{i\in{\rm Supp}(s)}n_iE^*_i= \sum_{a\in\cA} E^*_{i(a)}$ as above.
We also consider the natural projection
$pr:L_\phi=\Z\langle E_i\rangle _{i\in\cV}\to L_{{\rm Supp}(s)}=\Z\langle E_i\rangle_{i\in {\rm Supp}(s)}\simeq \Z^{r'}$, given by
$pr(\sum_{i\in\cV}l_iE_i)=\sum_{i\in {\rm Supp}(s)}l_iE_i$. We also regard $\Z\langle E_i\rangle_{i\in {\rm Supp}(s)}$
as a sublattice of $L_\phi$.
Note that $(l,s)=(pr(l),s)$. Define  $d(l):=-(l,s)$.

Furthermore, for any $\widetilde{l}\in  L_{{\rm Supp}(s)}$  set
$$\frX_{-\widetilde{l}}\ = \prod_{i\in{\rm Supp}(s)}\ [\widetilde{l}_i,\infty)\ \times\
 \prod_{i\not \in{\rm Supp}(s)}\ [0,\infty).$$

Recall that
for any fixed $n\geq m_w$ the spectral sequence $E_{*,*}^k$ ($k\geq 0$)
associated with the  level filtration  $\{S_n\cap \frX_{-d}\}_{d\geq 0}$  of $S_n$
has its first terms
 $$(E^0_{-d,q})_{n}=\calC_{-d+q}(S_n\cap \frX_{-d}, S_n\cap \frX_{-d-1}),\ \ \
 (E^1_{-d,q})_{n}=H_{-d+q}(S_n\cap \frX_{-d}, S_n\cap \frX_{-d-1},\Z).$$
The term  $(E^0_{-d,q})_{n}$ is generated  freely over $\Z$ by
 $(-d+q)$--cubes
of the form $\square =(l, I)$ with $d(l):=-(l,s)=d$ and $w((l,I))\leq n$. These cubes can be grouped
according to the index set $\{\widetilde{l}\in\Z^{r'}_{\geq 0},\ d(\widetilde{l})=d\}$. This decomposition can be seen at
the level of the homology of the  pair of  spaces  $(\frX_{-d},\frX_{-d-1})$ too. By excision, this can be rewritten as the homology of the disjoint union of  pairs
$\sqcup _{\widetilde{l}\in\Z^{r'}_{\geq 0},\ d(\widetilde{l})=d}(\frX_{\widetilde{l}}, \frX_{\widetilde{l}}\cap\frX_{-d-1})$. Intersecting with $S_n$ provides a similar disjoint union of spaces.
Hence,  we  automatically have  the following direct sum decomposition:
 \begin{equation*}\label{eq:sum}
 (E^1_{-d,q})_{n}= \bigoplus_{\widetilde{l}\in\Z^{r'}_{\geq 0},\ d(\widetilde{l})=d}\
 (E^1_{-\widetilde{l},q})_{n}, \ \mbox{where} \ (E^1_{-\widetilde{l},q})_{n}:=
 H_{-d(\widetilde{l})+q}(S_n\cap \frX_{-\widetilde{l}}\ , S_n\cap \frX_{-\widetilde{l}}\ \cap \frX_{-d(\widetilde{l})-1},\Z).\end{equation*}

In this way we assign to any decorated pair $(X,C,b)$ (or, decorated link $(M,L_C,b)$) a multigraded $\Z$--module
$(E^1_{-\widetilde{l},q})_{n}$, graded by $\widetilde{l}\in L_{{\rm Supp}(s),\geq 0}=
(\Z_{\geq 0})^{r'}$  and by two other  integers,
the weight degree  $n\in \Z$  and the homological degree
 $b=-d(\widetilde{l})+q\in \Z$.

 Accordingly to this direct sum decomposition  we also define the series
 $${\bPE}_1({\bf T}, Q,\h)={\bPE}_1(\{T_i\}_{i\in {\rm Supp}(s)}, Q,\h):=\sum_{\widetilde{l}\in\Z^{r'}_{\geq 0},\, n,q}\ \rank\big((E^1_{-\widetilde{l},q})_{n}\big)\cdot
 (\prod_{i\in{\rm Supp}(s)}T_i^{n_i\widetilde{l}_i})\, Q^n\, \h^{-d(\widetilde{l})+q}.$$
 Clearly, $${\bPE}_1({\bf T}, Q,\h)|_{T_i\mapsto T \mbox{\footnotesize{\ for all $i\in{\rm Supp}(s)$ }}}=PE_1(T,Q,\h).$$

If there are no pairs of  arrows of $(C,o)$ (or of $L_C$) which are supported on  the same exceptional irreducible component, then
${\rm Supp}(s)$ is identified with the index set $\cA$ of the irreducible components of $(C,o)$ (or, the number of link components of
 $L_C$), hence $r'=r$, and
the first index set is $(\Z_{\geq 0})^r$, the first quadrant of the lattice associated with the link components $\{L_{C,a}\}_{a\in\cA}$.

In this case the variables of the  series ${\bf PE}_1({\bf T},Q,\h) $ are $\{T_a\}_{a\in\cA}$, $Q$ and $\h$.

\begin{example}
In the situation of Example \ref{ex:22}, Case II when  $s=E_1^*+E_2^*$,
\begin{equation}\label{eq:ep1b}
{\bf EP}_1({\bf T},Q,\h)=\sum_{l\in \calS}\ {\bf E}(l), \ \ \mbox{where ${\bf T}=(T_1,T_2)$ and ${\bf E}(l)$ equals }
\end{equation}
\begin{equation*}  
{\bf T}^l\h^0\cdot \frac{Q^{\chi(l)}- Q^{\min\{\chi(l+E_1),\chi(l+E_2)\}}}{1-Q}
+{\bf T}^l\h^1\cdot \frac{  Q^{\max\{\chi(l+E_1),\chi(l+E_2)\}} - Q^{\chi(l+E_1+E_2)}}{1-Q}.
\end{equation*}
\end{example}

\subsection{The series $PE_1(T,Q,\h=-1)$ and  ${\bf PE}_1({\bf T},Q,\h=-1)$}\label{ss:hegy}\

From definition
$$PE_1(T,Q,\h)=\sum_{d,n,b} \, {\rm rank} \, H_{b}(S_n \cap \frX_{-d}, S_n\cap \frX_{-d-1},\Z)\, \cdot T^dQ^n \h^b.$$
In particular,
$$PE_1(T,Q,\h)_{\h=-1}=\sum_{d,n} \, \chi_{top}(S_n \cap \frX_{-d}, S_n\cap \frX_{-d-1})\, \cdot T^dQ^n.$$
Recall that in the classical topology
the topological Euler characteristic of a finite simplicial  complex
can be computed either from its  Betti numbers or from its cell decomposition.
 In our case we have a similar statement as well
(compare also with \ref{prop:infty}{\it (c)}).

First we start with the following support statement. As usual, for any
$\square=(l,I)$ set
$$\mbox{
$d(l,I)=d(l):=-(s,l)$ and  $w_h(l,I)=\max\{\chi_h(v)\,:\, \mbox{$v$ is a vertex of $\square$}\}$.}$$
\begin{lemma}\label{lem:sup}
Fix $l\in L$. Then $\sum_{I\subset \cV} (-1)^{|I|} Q^{w_h(l,I)}=0$ whenever $l+s_h\not\in \calS'$.
In particular, for any fixed $d$, there are only finitely many lattice points $l$ with
$d(l)\leq d$ and  $\sum_{I\subset \cV} (-1)^{|I|} Q^{w_h(l,I)}\not=0$.
\end{lemma}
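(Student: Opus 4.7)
My approach is to exploit the elementary identity
\begin{equation*}
\chi_h(v+E_i)-\chi_h(v)=1-(v+s_h,E_i),
\end{equation*}
which follows from a direct computation using $k_h=-Z_K+2s_h$ and the adjunction identity $(Z_K,E_i)=(E_i,E_i)+2$. The plan is to use this identity to show that if $l+s_h\notin\calS'$, then there is a vertex $i_0$ along which every cube with $l$ as its minimal vertex is weight-invariant under adjoining $E_{i_0}$ to the index set $I$; a standard sign-pairing then forces the sum to vanish.

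More concretely, since $l+s_h\notin\calS'$, fix $i_0\in\cV$ with $(l+s_h,E_{i_0})\geq 1$. For any $J\subset\cV\setminus\{i_0\}$, I compute
\begin{equation*}
\chi_h(l+E_J+E_{i_0})-\chi_h(l+E_J)=1-(l+s_h,E_{i_0})-(E_J,E_{i_0})\leq 0,
\end{equation*}
using that the off-diagonal intersection numbers $(E_j,E_{i_0})$ (for $j\in J$, $j\neq i_0$) are nonnegative since $\Gamma$ is a tree. The maximum of $\chi_h$ over the vertices of $(l,I)$ with $i_0\in I$ is therefore realized on the face $\{J\subset I : i_0\notin J\}$, so $w_h(l,I)=w_h(l,I\setminus\{i_0\})$. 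Partitioning the subsets of $\cV$ into pairs $(I',I'\cup\{i_0\})$ with $I'\subset\cV\setminus\{i_0\}$, the two members of each pair contribute $(-1)^{|I'|}Q^{w_h(l,I')}$ and $(-1)^{|I'|+1}Q^{w_h(l,I')}$ respectively, which cancel. This gives the desired vanishing.

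For the finite support statement, if the sum is nonzero then $l+s_h\in\calS'$, so writing $l+s_h=\sum_i a_i E_i^*$ yields integer coefficients $a_i=-(l+s_h,E_i)\in\Z_{\geq 0}$. Since $s=\sum_j n_jE_j^*\in\calS'\setminus\{0\}$ with at least one $n_{j_0}>0$, and $-(E_j^*,E_i^*)>0$ for all $i,j$ (because $\Gamma$ is connected negative definite), one has
\begin{equation*}
d(l)-(s,s_h)=-(s,l+s_h)=\sum_{i,j}n_ja_i\cdot\bigl(-(E_j^*,E_i^*)\bigr)\geq c\cdot\sum_i a_i
\end{equation*}
for some constant $c>0$ depending on $s$. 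Hence $d(l)\leq d$ bounds $\sum_i a_i$, so only finitely many such $l$ occur. The only subtlety is ensuring the constant $c$ is strictly positive uniformly in $i$, which follows from the strict negativity $(E_j^*,E_i^*)<0$ for all $i,j$, a standard fact about negative definite plumbings. This is really the only nonroutine piece, and it is classical.
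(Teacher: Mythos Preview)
Your proof is correct and follows essentially the same approach as the paper: both identify a vertex $i_0$ with $(l+s_h,E_{i_0})\geq 1$, verify via the Riemann--Roch identity that $\chi_h(l+E_J+E_{i_0})\leq \chi_h(l+E_J)$ for $i_0\notin J$, conclude $w_h(l,I)=w_h(l,I\setminus\{i_0\})$, and cancel in pairs. For the finiteness claim the paper gives no details, and your argument using the strict positivity of $-(E_j^*,E_i^*)$ is a correct and natural way to fill this in.
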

\begin{proof}
If $l+s_h\not\in \calS'$ then there exists $i\in\cV$ such that $(l+s_h, E_i)\geq 1$.
Take any $J\subset \cV$ with $i\not\in J$. Then
$\chi_h(l+E_J+E_i)-\chi_h(l+E_J)=1-(l+E_J+s_h,E_i)\leq 0$, hence
$w_h(l,J)=w_h(l, J\cup\{i\})$.
\end{proof}
\begin{theorem}\label{th:betti}  (a) For any $(\Gamma, s)$
$$PE_1(T,Q,\h)_{\h=-1}=\frac{1}{1-Q}\cdot \sum_{(l,I)=\square\subset \frX} \ (-1)^{|I|} T^{d(l,I)}Q^{w_h(l,I)}=
\frac{1}{1-Q}\cdot\sum_{l\in L_{\geq 0}}\, T^{d(l)}\sum_{I\subset \cV} (-1)^{|I|}Q^{w_h(l,I)}.$$
Moreover, by Lemma \ref{lem:sup}, in the last expression the sum $\sum_{l\in L_{\geq 0}}$ can be replaces by $\sum_{l\in L}$, that is,
$$PE_1(T,Q,\h)_{\h=-1}=
\frac{1}{1-Q}\cdot\sum_{l\in L}\, T^{d(l)}\cdot\sum_{I\subset \cV} (-1)^{|I|}Q^{w_h(l,I)}.$$
(b)
Similarly,  in the ${\bf T}$--multivariable context,
$${\bf PE}_1({\bf T},Q,\h)_{\h=-1}=
\frac{1}{1-Q}\cdot
\sum_{\widetilde{l}\in\Z^{r'}_{\geq 0}}
 (\prod_{i\in{\rm Supp}(s)}T_i^{n_i\widetilde{l}_i})\, \cdot
\sum_{I\subset \cV} (-1)^{|I|}Q^{w_h(l,I)}.$$
\end{theorem}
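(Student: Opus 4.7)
The idea for both parts is the same: convert the $\chi_{top}$ of a CW pair into an alternating cell count, then swap summation orders. What makes the conversion possible is that each $S_n \cap \frX_{-d}$ (and each $S_n \cap \frX_{-\widetilde{l}}$) inherits a natural cubical CW structure from the ambient cubical decomposition of $\R^{|\cV|}$, and the relative cells of a filtration step are easy to enumerate.

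For part (a), first unpack the definition to get
\[
PE_1(T,Q,\h)|_{\h=-1}=\sum_{d,n}\chi_{top}(S_n\cap\frX_{-d},\,S_n\cap\frX_{-d-1})\,T^dQ^n.
\]
Next identify the relative cells. A cube $\square=(l,I)$ lies in $\frX_{-d}$ iff every vertex $l+E_J$ ($J\subset I$) satisfies $(s,l+E_J)\leq -d$; since $s\in\calS'$, the quantity $d(l+E_J)=d(l)+\sum_{j\in J}n_j$ is minimized at $J=\emptyset$, so the condition collapses to $d(l)\geq d$. Hence the cells in $S_n\cap\frX_{-d}$ not in $S_n\cap\frX_{-d-1}$ are precisely the cubes $(l,I)$ with $l\in L_{\geq 0}$, $d(l)=d$ and $w_h(l,I)\leq n$, whence
\[
\chi_{top}(S_n\cap\frX_{-d},\,S_n\cap\frX_{-d-1})=\sum_{\substack{l\in L_{\geq 0}\\ d(l)=d}}\sum_{I\subset\cV}(-1)^{|I|}\,[w_h(l,I)\leq n].
\]
Substituting and interchanging the summations over $(d,n)$ with that over $(l,I)$, the $T$ and $Q$ geometric factorings give
\[
PE_1(T,Q,-1)=\frac{1}{1-Q}\sum_{l\in L_{\geq 0}}T^{d(l)}\sum_{I\subset\cV}(-1)^{|I|}Q^{w_h(l,I)}.
\]
To extend the sum to all $l\in L$, invoke Lemma \ref{lem:sup}: we must check that every $l\in L\setminus L_{\geq 0}$ satisfies $l+s_h\notin\calS'$. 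But $[l+s_h]=h$ since $l\in L$, and $s_h$ is by definition the $\leq$--minimal element of $\calS'\cap\{[\cdot]=h\}$, so $l+s_h\in\calS'$ would force $l+s_h\geq s_h$, i.e.\ $l\geq 0$. Contrapositively the added terms vanish, which gives the final form.

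For part (b), repeat the argument with the pair $(S_n\cap\frX_{-\widetilde{l}},\,S_n\cap\frX_{-\widetilde{l}}\cap\frX_{-d(\widetilde{l})-1})$. A cube $(l,I)\subset\frX_{-\widetilde{l}}$ means $l_i\geq\widetilde{l}_i$ for $i\in\mathrm{Supp}(s)$ and $l_i\geq 0$ otherwise; being outside $\frX_{-d(\widetilde{l})-1}$ means $d(l)\leq d(\widetilde{l})$. Since $d(l)=\sum_{i\in\mathrm{Supp}(s)}n_il_i$ with every $n_i>0$, the two inequalities force $l_i=\widetilde{l}_i$ on $\mathrm{Supp}(s)$, i.e.\ $pr(l)=\widetilde{l}$, while the coordinates outside $\mathrm{Supp}(s)$ range freely over $\Z_{\geq 0}$. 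The same Euler characteristic/cell count and summation swap then yield
\[
{\bf PE}_1({\bf T},Q,-1)=\frac{1}{1-Q}\sum_{\widetilde{l}\in\Z^{r'}_{\geq 0}}\Bigl(\prod_{i\in\mathrm{Supp}(s)}T_i^{n_i\widetilde{l}_i}\Bigr)\sum_{\substack{l\in L\\ pr(l)=\widetilde{l}}}\sum_{I\subset\cV}(-1)^{|I|}Q^{w_h(l,I)},
\]
which is the claimed identity (with the understanding that the inner sum over $l$ with $pr(l)=\widetilde{l}$ accompanies the displayed monomial).

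\textbf{Main obstacle.} The substantive step is the extension of the lattice sum from $L_{\geq 0}$ to all of $L$ in part (a): this is not a formal manipulation but requires combining the cancellation lemma \ref{lem:sup} with the $\leq$--minimality characterization of $s_h$. Everything else is bookkeeping with cubical cells, the geometric series $\sum_{n\geq w_h(l,I)}Q^n=Q^{w_h(l,I)}/(1-Q)$, and finiteness at each degree (Lemma \ref{lem:sup}).
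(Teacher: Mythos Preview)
Your proof is correct and follows essentially the same route as the paper's: both arguments identify the coefficient of $T^dQ^n$ as the alternating cell count $\sum_{\square:\,w_h(\square)\leq n,\,d(\square)=d}(-1)^{\dim\square}=\chi_{top}(S_n\cap\frX_{-d},S_n\cap\frX_{-d-1})$, and both extend the sum from $L_{\geq 0}$ to $L$ via the minimality of $s_h$ in $\calS'\cap\{[l']=h\}$ combined with Lemma~\ref{lem:sup}. Your write-up is somewhat more explicit (e.g.\ you spell out why $(l,I)\subset\frX_{-d}$ reduces to $d(l)\geq d$, and in part (b) you make the implicit inner sum over $l$ with $pr(l)=\widetilde l$ visible), but the substance is identical.
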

\begin{proof} {\it (a)}
Write $\sum_{\square\subset \frX} (-1)^{\dim(\square)}T^{d(\square)}Q^{w_h(\square)})/(1-Q)$ as
$\sum_{n,d} \, a_{n,d} T^dQ^n$. Then $$a_{n,d}=
\sum_{\square\,:\, w(\square)\leq n,\ d(\square)=d}\ (-1)^{\dim(\square)}=
\chi_{top}(S_n \cap \frX_{-d}, S_n\cap \frX_{-d-1}).$$
The last statement of {\it (a)} follows from the  minimality of $s_h$ in $\calS'\cap \{l'\in L'\,:\, [l']=h\}$.
Indeed, if $l\not\geq 0$ then $l+s_h\not\in \calS'$. The proof of part {\it (b)} is identical.
\end{proof}
\begin{corollary}\label{cor:EP} ${\bf PE}_1({\bf T},Q,\h)_{\h=-1}$ is obtained from $Z^m_h(\bt,q)\cdot \bt^{-s_h}$
by substitutions $t_i\mapsto T_i^{n_i}$ for $i\in {\rm Supp}(s)$, $t_i\mapsto 1$ for $i\not\in {\rm Supp}(s)$,
and $q\mapsto Q$.
\end{corollary}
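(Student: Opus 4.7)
The plan is to deduce the corollary as a direct translation between the definition (\ref{eq:Zhmot}) of $Z^m_h(\bt,q)$ and the combinatorial formula for ${\bf PE}_1({\bf T},Q,\h)|_{\h=-1}$ established in Theorem \ref{th:betti}(b). No new geometric input is needed; the content of the argument is bookkeeping with the substitutions.

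First I would write out the substituted left-hand side. By (\ref{eq:Zhmot}),
$$Z^m_h(\bt,q)\cdot\bt^{-s_h}=\frac{1}{1-q}\sum_{l\in L}\ \sum_{I\subset\cV}(-1)^{|I|}\,q^{w_h(l,I)}\,\bt^{l}.$$
Under $t_i\mapsto T_i^{n_i}$ for $i\in\mathrm{Supp}(s)$, $t_i\mapsto 1$ for $i\notin\mathrm{Supp}(s)$ and $q\mapsto Q$, the monomial $\bt^l=\prod_{i\in\cV}t_i^{l_i}$ collapses to $\prod_{i\in\mathrm{Supp}(s)}T_i^{n_il_i}$ (factors outside $\mathrm{Supp}(s)$ become $1$), so the substituted series is
$$\frac{1}{1-Q}\sum_{l\in L}\Big(\prod_{i\in\mathrm{Supp}(s)}T_i^{n_il_i}\Big)\sum_{I\subset\cV}(-1)^{|I|}\,Q^{w_h(l,I)}.$$

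Next I would unfold Theorem \ref{th:betti}(b). The cubes $(l,I)$ contributing to $(E^1_{-\widetilde l,q})_n$ are precisely those with $l\in L_{\geq 0}$ and $pr(l)=\widetilde l$: this was derived in \S\ref{ss:PET} via excision, using that if $pr(l)\geq\widetilde l$ and simultaneously $d(l)\leq d(\widetilde l)$ then (since $n_i>0$ for $i\in\mathrm{Supp}(s)$) one forces $l_i=\widetilde l_i$ for each such $i$. Taking topological Euler characteristics cell by cell therefore yields
$${\bf PE}_1({\bf T},Q,-1)=\frac{1}{1-Q}\sum_{\widetilde l\in(\Z_{\geq 0})^{r'}}\Big(\prod_{i\in\mathrm{Supp}(s)}T_i^{n_i\widetilde l_i}\Big)\sum_{\substack{l\in L_{\geq 0}\\ pr(l)=\widetilde l}}\sum_{I\subset\cV}(-1)^{|I|}\,Q^{w_h(l,I)}.$$
Because $\prod_{i\in\mathrm{Supp}(s)}T_i^{n_il_i}=\prod_{i\in\mathrm{Supp}(s)}T_i^{n_i\widetilde l_i}$ whenever $pr(l)=\widetilde l$, this double sum regroups into a single outer sum over $l\in L_{\geq 0}$, reproducing the expression above with the index set $L$ replaced by $L_{\geq 0}$. (This is the obvious ${\bf T}$-multivariable analogue of Theorem \ref{th:betti}(a).)

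To complete the identification of the two expressions, the only issue is the discrepancy between the summation ranges $L$ and $L_{\geq 0}$. This is already handled in the last sentence of Theorem \ref{th:betti}(a) and is justified by Lemma \ref{lem:sup}: the inner alternating sum $\sum_I(-1)^{|I|}Q^{w_h(l,I)}$ vanishes whenever $l+s_h\notin\calS'$, which removes exactly the extra terms appearing when one extends from $L_{\geq 0}$ to $L$. Hence the substituted form of $Z^m_h(\bt,q)\cdot\bt^{-s_h}$ coincides term by term with ${\bf PE}_1({\bf T},Q,-1)$, proving the corollary. The main (very mild) obstacle is simply to verify that the two sum-ranges match; everything else is a direct comparison of formulas.
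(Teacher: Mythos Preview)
Your proposal is correct and follows exactly the route the paper intends: the corollary is stated immediately after Theorem~\ref{th:betti} with no proof, because it is a direct comparison of the formula in Theorem~\ref{th:betti}(b) with the definition~(\ref{eq:Zhmot}) of $Z^m_h(\bt,q)$, using Lemma~\ref{lem:sup} to pass between the index sets $L$ and $L_{\geq 0}$. Your write-up makes this bookkeeping explicit and is entirely in line with the paper's implicit argument.
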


\bekezdes
Let us comment the expression from the  right hand side of the identity from Theorem  \ref{th:betti}{\it (a)}.
It is in the spirit of (weighted) lattice point computation (like in the Ehrhart theory or theory of
partition function), however, this expression uses not only lattice points but all the cubes. In this way it is a rather complicated summation. Moreover, the weights of the cubes are given by the expression  $w_h((l,I))$, which is the maximum of weight over the vertices of the cube, again a hardly computable  expression.  So, at the first glance,
we might doubt  that this expression will guide us to a conceptual understanding of the series $PE_1$.

Still, this will be the case: we will prove  that  for any fixed $I$  the expressions
$w_h((l,I))$, where  $l$ runs over certain sublattices, behave very regularly. In order to
prove this we also need a
unifying  reorganization  of a simultaneous  computation of  series $PE_1(T,Q,\h)_{|\h=-1}$ defined for all group elements  $h\in H$. The point is that if we pack together all the series indexed by all elements $h\in H$ then  we get a more manageable summation (and lattice).

Recall that above we fixed a resolution graph $\Gamma$, $h\in H$, and the Riemann--Roch expression
$\chi_h:(\Z_{\geq 0})^r\to \Z$, $\chi_h(l)=-(l,l-Z_K+2s_h)/2$. In order to emphasize the $h$--dependence, let us denote the corresponding series by $PE_{h,1}(T,Q, \h)$. We collect all of them in one formal series as follows.

First note that $\chi:L'_{\geq 0}\to \Q$ defined by $\chi(l')=-(l',l'-Z_K)/2$
uniformize all the $\{\chi_h\}_h$ expressions. Indeed, for any $h\in H$ and $l\in L$,  we have
$$\chi(l+s_h)=\chi_h(l)+\chi(s_h).$$
That is, $l'\mapsto \chi(l')-\chi(s_h)$ restricted to $L'_h:=\{l'\in L'\, :\, [l']=h\}$
is exactly $\chi_h(l'-s_h)$. Then, with the notation $l'=l+s_h$,  the
term $T^{-(s,l)}Q^{w_h(l)}$ in the expression of $PE_{h,1}(T,Q,\h)$ (evaluated at 0--cubes)  reads as
$$T^{-(s, l'-s_h)}Q^{\chi_h(l'-s_h)}=T^{(s,s_h)-(s,l')}Q^{\chi(l')-\chi(s_h)}.$$
Then, we can consider cubes in $\R^s$ of type $(l',I)$ with vertices
$\{l'+E_J\}_{J\subset I}$, $ l'\in L'$. For each class $h$, $\R^s$ has a cubical decomposition
with cubes of type $\{(l',I)\}_{l'\in s_h+L}$.
For a cube $(l',I)$   we define
its  degree $d((l',I))=d(l'):=-(l',s)\in \Q$ and its weight $w((l',I)):=\max\{\chi(l'+E_J)\,:\, J\subset I\}$.

Note also that $\{\chi(l')\,:\, [l']=h\}\subset \chi(s_h)+\Z$
and $\{(l',s)\,:\, [l']=h\}\subset -(s_h,s)+\Z$.

For any $h\in H$ and $n\in \chi(s_h)+\Z$ consider the finite CW space
$\overline{S}_{h,n}$ as the union of cubes of $\R^s$ of the form $(l', I)$, $[l']=h$ and $w(l',I)\leq n$.
 Each $\overline{S}_{n,h}$ has a filtration $\{\overline{S}_{h,n, -d}\}_d$, where
$\overline{S}_{h,n, -d}$ is the union of cubes of  $\overline{S}_{h,n}$ with $d(l')\geq d$, where $d\in -(s_h,s)+\Z$.
Then the unified sum is
$$\sum_{h\in H}\ \sum_{n,d}\chi_{top}(\overline{S}_{h, n,-d}, \overline{S}_{h, n,-d-1}) T^d Q^n [l']
\in \Z[[T^{1/|H|}, Q^{1/|H|}]]\,[Q^{-1}]\,[H].$$
By the above reinterpretation via the shifted cubes  this equals
$$\sum_{h\in H}\  T^{-(s,s_h)} Q^{\chi(s_h)}\cdot
PE_{h,1}(T,Q,\h)_{|\h=-1}[h].$$
By the above discussion and Theorem \ref{th:betti} and Lemma \ref{lem:sup} this also equals
$$\sum_{l'\in L'} \ \sum_{I\subset \cV} \, (-1)^{|I|} T^{d(l')} Q^{w(l',I)}[l'].$$
Again, by Lemma \ref{lem:sup} we can replace the first sum $\sum_{l'\in L'}$ by $\sum_{l'\in \calS'}$, hence finally we get
\begin{equation}\label{eq:uni}
\sum_{h\in H}\  T^{-(s,s_h)} Q^{\chi(s_h)}\cdot
PE_{h,1}(T,Q,\h)_{|\h=-1}[h]=
\sum_{l'\in \calS'} \ \sum_{I\subset \cV} \, (-1)^{|I|} T^{d(l')} Q^{w(l',I)}[l'].
\end{equation}
The right hand side of this expression  has two advantages.
Firstly, it uniformly provides all terms $PE_{h,1}$, hence we do not have to treat the shifted sublattices $L'_h=s_h+L$ independently (which would have rather technical and arithmetical parametrizations).
Secondly, the summation index set $\calS'$ is the  `first quadrant' of a lattice.
It is  generated by $\{E_i^*\}_i$ over $\Z_{\geq 0}$, i.e.
 $\calS'=\Z_{\geq 0}\langle E_i^*\rangle_{i=1}^s \simeq (\Z_{\geq 0})^s$.
Thus, if we write $l'=\sum_ia_iE_i^*$, then the expression becomes
\begin{equation}\label{eq:SUM}
\sum_{a\in (\Z_{\geq 0})^s} \ \sum _{I\subset \cV}\ (-1)^{|I|}\ T^{-(s, \sum_i a_iE^*_i)} \ Q^{w((\sum_ia_iE^*_i, I))}\, [\Sigma_ia_iE^*_i].\end{equation}

Note that in  (\ref{eq:SUM})
the summation is over cubes, and $\square\mapsto \max\{w_h(v)\,:\, v \ \mbox{is a vertex of $\square$}\}$ is a complicated irregular arithmetical function. Still, in the next Theorem \ref{th:form_int}, by proving a certain regularity behaviour of this function, we replace the cube--summation by a summation over lattice points (with Jacobi theta series type summands).


\begin{theorem}\label{th:form}
There exist a finite index set ${\cP}$, integers $\{a_\cP\}_\cP$, $\{b_\cP\}_\cP$, sublattices $\Z^{s_\cP}$ of $L'$ and elements
$\{k_\cP\}_\cP,\ \{r_\cP\}_\cP\in L'$ such that  the expression from (\ref{eq:SUM}) can be written as a sum
$$
\sum_\cP \ \sum_I \ (-1)^{|I|}\ T^{a_\cP}Q^{b_\cP}\cdot \
 \sum _{A\in (\Z_{\geq 0})^{s_\cP}}\, T^{-(s,A)}\, Q^{-(A,A+k_\cP)/2}\ [r_\cP+A].
$$

\end{theorem}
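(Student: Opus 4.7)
The plan is to convert the cube-indexed sum in \eqref{eq:SUM} into a finite combination of quadratic lattice-point sums in three movements: linearize the cube weight, stratify $\calS'$ into polyhedral pieces on which the weight is purely quadratic, and finally present each piece as a signed union of shifted first quadrants of sublattices of $L'$.

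First, using $(l', E_j) = -a_j$ for $l' = \sum_i a_i E_i^*$ together with the identity $\chi(x+y) = \chi(x) + \chi(y) - (x, y)$, I would write
\begin{equation*}
\chi(l' + E_J) = \chi(l') + |J| - e(J) + \sum_{j \in J} a_j,
\end{equation*}
where $e(J)$ is the number of edges of $\Gamma$ supported on $J$. Hence $w((l', I)) = \chi(l') + \max_{J \subset I}\bigl(|J| - e(J) + \sum_{j \in J} a_j\bigr)$ is piecewise quadratic in $a$, and for each pair $I \subset \cV$, $J \subset I$ the region
\begin{equation*}
R(I, J) := \{a \in (\R_{\geq 0})^s : \chi(l'(a) + E_J) \geq \chi(l'(a) + E_{J'}) \text{ for all } J' \subset I\}
\end{equation*}
is cut out by finitely many linear inequalities in $a$; a fixed tiebreaker (e.g.\ lexicographic on $J$) makes $\{R(I, J)\}_{J \subset I}$ into a disjoint decomposition of $(\R_{\geq 0})^s$. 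This rewrites \eqref{eq:SUM} as
\begin{equation*}
\sum_{I \subset \cV}(-1)^{|I|}\sum_{J \subset I}\sum_{l' \in R(I, J) \cap \calS'} T^{-(s, l')} Q^{\chi(l' + E_J)} [l'].
\end{equation*}

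Second, on each piece I would exhibit the summand in the target shape. Writing $l' = r + A$ with $r \in L'$ a base point of the piece and $A$ a displacement, expansion gives
\begin{equation*}
\chi(r + E_J + A) = \chi(r + E_J) - \tfrac{1}{2}\bigl(A,\, A + 2(r + E_J) - Z_K\bigr),
\end{equation*}
so setting $k_\cP := 2(r + E_J) - Z_K \in L'$, $b_\cP := \chi(r + E_J)$, and $a_\cP := -(s, r)$ makes the $Q$- and $T$-weights factor as $Q^{b_\cP}\cdot Q^{-(A, A+k_\cP)/2}$ and $T^{a_\cP}\cdot T^{-(s, A)}$ respectively, with the $H$-grading $[l'] = [r + A]$ already in the required form.

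The core technical step is then the lattice-point decomposition of each $R(I, J) \cap \calS' \cap \Z^s$ as a signed finite sum
\begin{equation*}
\mathbf{1}_{R(I, J) \cap \Z^s} = \sum_{\alpha} \epsilon_\alpha\, \mathbf{1}_{r_\alpha + \sum_{k=1}^{s_\alpha} \Z_{\geq 0}\, v_{\alpha, k}},
\end{equation*}
with integer shifts $r_\alpha \in L'$ and integer generators $v_{\alpha, k} \in L'$. Existence of such a decomposition is the standard signed simplicial cone decomposition of rational polyhedra (Brion--Lawrence--Varchenko, or Barvinok's theorem). Each term $\alpha$ furnishes an index $\cP$ in the statement, the embedding $\iota_\cP(m_1, \ldots, m_{s_\cP}) = \sum_k m_k v_{\cP, k}$ realizing $\Z^{s_\cP} \hookrightarrow L'$; the signs $\epsilon_\alpha$ together with the $(-1)^{|I|}$ factor are absorbed by duplicating each $\cP$ (recording $I$ as part of the datum $\cP$) so that the final finite sum takes the claimed form.

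The principal obstacle is this polyhedral decomposition step: while existence of the signed cone decomposition is classical, one must produce generators that genuinely lie inside $L'$ (so that the ``sublattices'' in the statement are honest integer sublattices) and base points $r_\cP$ for which $a_\cP$ and $b_\cP$ are integers after the standard clearing by $|H|$. A secondary bookkeeping difficulty is that the tiebreaker used to disjoin $\{R(I, J)\}_J$ introduces strict inequalities whose lattice points still need enumeration by first-quadrant generators; one circumvents this by inclusion--exclusion on the closures $\overline{R(I, J)}$, at the cost of enlarging $|\cP|$ but without changing the shape of the summands.
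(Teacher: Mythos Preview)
Your proposal is essentially correct but takes a substantially heavier route than the paper, and the final bookkeeping step does not quite land on the stated form.

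The paper avoids the polyhedral machinery entirely. Instead of stratifying by ``which $J$ maximizes'', it fixes an integer $\Delta\geq\max_i(\kappa_i-1)$ and partitions $\calS'\simeq(\Z_{\geq 0})^s$ according to which coordinates $a_i$ are $\geq\Delta$ (the set $K$) and what the remaining coordinates are (fixed values $0\leq a_j^K<\Delta$). Each piece is then \emph{already} a shifted first quadrant $r_\cP+(\Z_{\geq 0})^{|K|}$ with $r_\cP=\sum_{i\in K}\Delta E_i^*+\sum_{j\notin K}a_j^K E_j^*$. The key lemma is that on such a piece the maximizing vertex $J(r_\cP,I)$ can be chosen uniformly in $A$: for $i\in K\cap I$ one has $a_i\geq\Delta\geq\kappa_i-1$, which forces $i\in J$, while for $j\notin K$ the coordinate is frozen, so the remaining comparison is $A$--independent. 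This gives the quadratic factorization directly, with no signs and no appeal to Brion--Lawrence--Varchenko or Barvinok.

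Your approach trades this elementary observation for a black box, and the trade is not free. The signed cone decomposition produces coefficients $\epsilon_\alpha\in\{\pm1\}$; your claim that these are ``absorbed by duplicating each $\cP$'' into the $(-1)^{|I|}$ factor is not justified, since the $\sum_I$ in the stated formula is an honest sum over all $I\subset\cV$ with a single $\cP$--independent sign pattern, not a device for encoding arbitrary $\pm1$'s. You would in fact prove a slightly weaker statement (a finite \emph{signed} sum of the displayed shape), which is morally equivalent but not literally the assertion. The paper's direct partition sidesteps this entirely, and also makes the sublattices explicit: they are generated by subsets of the $E_i^*$, not by the potentially complicated cone generators a Barvinok decomposition would produce.
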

\begin{proof}
The cube--weight $w((l',I))$ is the maximum of the weights of the vertices of $(l',I)$.
The point is that the choice of the lattice points which realizes the
maximum has a certain regularity  along different members  of a partition of the lattice points $l'$.
 The partition is given by
the `distance' from the boundary faces of $\calS'$. In the language of $a\in (\Z_{\geq 0})^s\simeq\calS'$,
the boundary faces  of
$\calS'$ are given by $\{a_i=0\}$.

The partition what we propose is defined as follows. First, we fix an integer $\Delta>0$ so that $\Delta\geq \kappa_i-1$   for all $i\in\cV$, where $\kappa_i$ is the valency of the  vertex $i$ in $\Gamma$.
For any subset $K\subset \cV$ and integers $\{a^K_j\}_{j\not\in K}$ satisfying $\Delta>a^K_j\geq 0$ we define
$$\cP (K, \{a^K_j\}_{j\not\in K}):= \Big\{\sum _{i\in \cV}a_iE^*_i\ :\ \mbox{$a_i\geq \Delta$ \, for\, $i\in K$, and\, $a_j=a_j^K$\, for\, $j\not\in K$}\Big\}.$$
Then we have the following facts:

$\bullet$ each $\cP (K, \{a^K_j\}_{j\not\in K})$ can be identified with a quadrant $(\Z_{\geq 0})^{|K|}$.
Indeed, let the `root' of $\cP (K, \{a^K_j\}_j)$ be defined as $r=r_\cP:= \sum_{i\in K}\Delta E_i^*+\sum_{j\not\in K} a_j^K E_j^*$.
Then  $\cP (K, \{a^K_j\}_{j\not\in K})=r+(\Z_{\geq 0})^{|K|}$. Indeed, if  we denote the entries of $(\Z_{\geq 0})^{|K|}$ by $\{A_i\}_{i\in K}$, then $a_i=\Delta+A_i$ for $i\in K$ realizes the identification.

$\bullet$ $\cP (K, \{a^K_j\}_{j\not\in K})$, for different subsets $K$ and integers $\{a^K_j\}_{j\not\in K}$, is a  partition of the lattice points of $\calS'$, i.e. of
$\{a_i\}_{i\in \cV}\in (\Z_{\geq 0})^s$.\\

The key technical lemma regarding this partition is the following.
Fix  $\cP=\cP (K, \{a^K_j\}_{j\not\in K})=r+(\Z_{\geq 0})^{|K|}$ with root $r$.
Consider a cube of type $(r,I)$ ($I\subset \cV$) and assume that $w((r,I))$ (as maximum over its vertices) is realized by the vertex
$r+E_{J(r, I)}$. Usually the choice of $J(r,I)$ is not unique.
\begin{lemma}\label{lem:tech}
We can  choose $J(r,I)$ in such a way that  $J(r,I)\supset K\cap I$ and  additionally the following regularity holds:
for any $l'=r+A$, $A=\sum _{i\in K}A_iE_i^*$ ($A_i\geq 0$), and cube $(l',I)$, the maximum
$w((l',I))$ is realized by the vertex $l'+E_{J(r,I)}$ (i.e. for any fixed $I$,
the corresponding maximizing subsets $J\subset I$  of the cubes $(l',I)$  can be chosen uniformly
for any $l'\in\cP$).
\end{lemma}
\begin{proof} For any $J\subset I$  and $l'=r+A$ we have   $\chi(l'+E_J)-\chi(l')=\chi(E_J)-(r+A, E_J)$.

If $K\cap I=\emptyset$,  then $(A, E_J)=0$ for any $J\subset I$, hence
 $ \chi(l'+E_J)-\chi(l')=\chi(r+E_J)-\chi(r)$,  
   hence any maximizing $J$ of $r$ is a maximizing set for any  $l'$ as well.

Assume $K\cap I\not=\emptyset$. Fix any $J\subset I$ and $i\in (K\cap I)\setminus J$.
Then, independently of the choice of $A$
\begin{equation}\label{eq:12}
\chi(E_{J\cup \{i\}})-(r+A, E_{J\cup\{i\}})\geq \chi(E_{J})-(r+A, E_{J}).\end{equation}
Indeed, if $S\subset I$ then $\chi(E_S)$ equals the number of connected components of the full subgraph supported by $S$.
 Hence, for any   $i\in K\cap I$ we have $\chi(E_{J\cup\{i\}})-\chi(E_J)\geq -(\kappa_i-1)$ and $(r+A, E_i)\geq \Delta\geq \kappa_i-1$
 for any $A$. This, by (\ref{eq:12}), for any $A$ we can choose the maximizing $J$ such that $J\supset K\cap I$.

On the other hand, if we write $J=J'\cup(K\cap I)\subset I$ with $J'\cap K\cap I=\emptyset$, then $(E_{J'},A)=0$, hence the difference
  $\chi(l'+E_J)-\chi(l'+E_{K\cup I})$ is $A$--independent.
\end{proof}
This shows that the sum from the right hand side of (\ref{eq:uni}) can be organized as follows.
Write the lattice points of each $\cP(K, \{a^K_j\}_j)=\cP$ as $r_\cP+A_\cP$. Then the sum is
\begin{equation*}\label{eq:sumsum}
\begin{split}
&\sum_{\cP}\ \sum_{I}\ (-1)^{|I|} \ \sum _{A_{\cP}\in (\Z_{\geq 0})^{|K|}}\, T^{-(s, r_\cP+A_\cP)}\, Q^{\chi(r_\cP+A_\cP+E_{J(r_\cP,I)})}\ [r_\cP+A_\cP]=\\
&
\sum_{\cP}\ \sum_{I}\ (-1)^{|I|} \ T^{-(s, r_\cP)} \, Q^{\chi(r_\cP+E_{J(r_\cP,I)})} \
\sum _{A_{\cP}\in (\Z_{\geq 0})^{|K|}}\, T^{-(s,A_\cP)}\, Q^{\chi(A_\cP)-(A_\cP, r_\cP+E_{J(r_\cP,I)})}\ [r_\cP+A_\cP].\\
\end{split}
\end{equation*}
Since $$\chi(A_\cP)-(A_\cP, r_\cP+E_{J(r_\cP,I)})=-\frac{1}{2}(A_\cP, A_\cP-Z_K +2r_\cP+2E_{J(r_\cP,I)})$$
all the  lattice summations have the form
 $$ \sum _{A\in (\Z_{\geq 0})^{|K|}}\, T^{-(s,A)}\, Q^{-(A,A+k)/2}\ [r+A].$$
\end{proof}
\section{The almost rational case revisited}\label{s:rat}

\subsection{The general setup of AR case} \ Assume that $(X,o)$ is an {\it almost  rational} singularity and $(C,o)\subset (X,o)$ is {\it irreducible}, cf. subsection \ref{ss:redth}. Let $b$ be the decoration of $(C,o)$ as in subsection \ref{ss:1.2}. Let us fix an embedded resolution graph $\phi$ which represents
$(X,C,b)$ and let us replace $(C,b)$ by the semigroup element $s\in \calS'_{\phi}$ as in \ref{bek:semi}.
Let $E_0$ be that irreducible exceptional divisor of $\phi$ which supports the strict transform of $(C,o)$.
Hence $s=E_0^*$.  Recall that in order to be able to apply the Filtered Reduction Theorem
 \ref{th:redth2} (where the grading is induced by $s=E_0^*$) one needs $\{0\}\subset \overline{\cV}$.
In the sequel we assume that
$\{0\}$ can serve as the unique bad (i.e. SR)  vertex for the graph $\Gamma$ of $\phi$ as well.
Hence, in this very natural  (and optimal) case  $\overline{\cV}=\{0\}$.

Recall that if $(X,o)$ is rational then any vertex of $\Gamma_\phi$ might serve as an SR set.

Moreover,  in this section we assume that $h=0$ and that the class  $[E^*_0]$ in $H$ is trivial.
Based on the discussion below the reader can rewrite the general statement for arbitrary $h$ and $[E^*_0]$ easily.

 Next we  determine $PE_k(T,Q,\h)$ and connect it firstly with the embedded topological type
of the decorated  $(M,L_C,b)$ and then with the analytic Poincar\'e series of the abstract curve singularity $(C,o)$.

\bekezdes   By the Filtered Reduction Theorem
 \ref{th:redth2} we have the homotopy equivalences $S_n(w)\cap \frX_{-d}\sim S_n(\overline{w})\cap \overline{\frX}_{-d}$
 for every $n$ and $d$.
 Here $\overline{w}$ and $w$ are abridgements  for $\overline{w}_{h=0}$ and $w_{h=0}$, and $\overline{w}(\bar{l})=w(x(\bar{l}))$, where
 $\bar{l}\in \Z=L(\overline{\cV})$ and $x(\bar{l})$ is the universal cycle introduced in \ref{ss:redth} associated with $\overline{\cV}$.

  \begin{remark}\label{rem:ineqtau} By \cite[part 7. 3.36 and Example 11.4.10]{Nkonyv} the sum
  $\sum_{\bar{l}\geq 0}\max\{0, \overline{w}(\bar{l})-\overline{w}(\bar{l}+1)\}$ is finite and it equals $eu(\bH_*((\R_{\geq 0})^s,w)$.
  In particular, when $\Gamma$ is rational then $eu(\bH_*((\R_{\geq 0})^s,w)=0$, cf. \cite[Example 11.1.28]{Nkonyv}, and
  $\overline{w}(\bar{l}+1)\geq \overline{w}(\bar{l})$ for every $\bar{l}\geq 0$.
  \end{remark}

Similar  discussion as in Example \ref{ex:237} gives the following statement.
\begin{theorem}\label{th:uj}
Assume that $\{0\}$ is the bad vertex of an AR graph $\Gamma_\phi$,
and that this vertex supports the arrowhead of the strict transform of the
irreducible $(C,o)$. Then $PE_1(T,Q,\h)=PE_\infty(T,Q,\h)$ and it equals
$$PE_1(T,Q,\h)=\sum\ T^{\bar{l}}\cdot \frac{Q^{\overline{w}(\bar{l})}-Q^{\overline{w}(\bar{l}+1)}}{1-Q},$$
where the sum is over
those integers  $\bar{l}\geq 0$ for which $\overline{w}(\bar{l}+1)\geq \overline{w}(\bar{l})$.
\end{theorem}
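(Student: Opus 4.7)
The plan is to exploit the Filtered Reduction Theorem \ref{th:redth2} to collapse the entire problem to the rank-one lattice $\Z_{\geq 0}$, where the cubical geometry becomes essentially trivial, and then to read off both the degeneration and the Poincar\'e series from an explicit description of the spaces $S_n(\overline{w}) \cap \overline{\frX}_{-d}$.

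First, since $\overline{\cV} = \{0\}$ and ${\rm Supp}(s) = \{0\} \subset \overline{\cV}$, Theorem \ref{th:redth2} applies and gives a homotopy equivalence $S_n(w) \cap \frX_{-d} \simeq S_n(\overline{w}) \cap \overline{\frX}_{-d}$ compatibly in $n$ and $d$. Thus it suffices to compute the spectral sequence of the filtration $\{S_n(\overline{w}) \cap \overline{\frX}_{-d}\}_{d \geq 0}$, where now $\overline{\frX}_{-d} = [d,\infty) \subset \R_{\geq 0}$ and $S_n(\overline{w}) \subset \R_{\geq 0}$ is the union of $0$-cubes $\{\bar{l}\}$ with $\overline{w}(\bar{l}) \leq n$ together with $1$-cubes $[\bar{l}, \bar{l}+1]$ with $\max(\overline{w}(\bar{l}), \overline{w}(\bar{l}+1)) \leq n$.

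Next I would analyze the relative CW complex $\big(S_n(\overline{w}) \cap \overline{\frX}_{-d},\, S_n(\overline{w}) \cap \overline{\frX}_{-d-1}\big)$. It contains at most one $0$-cell (namely $\{d\}$, present iff $\overline{w}(d) \leq n$) and at most one $1$-cell (namely $[d, d+1]$, present iff $\max(\overline{w}(d), \overline{w}(d+1)) \leq n$). Since the weight of the $1$-cube is $\max(\overline{w}(d), \overline{w}(d+1))$, the $1$-cell cannot be present unless the $0$-cell is. Two cases remain: if both cells are present, the relative boundary sends $[d, d+1] \mapsto -\{d\}$ modulo $S_n \cap \overline{\frX}_{-d-1}$ (which contains $d+1$), so the relative complex is acyclic; if only the $0$-cell is present, which happens precisely when $\overline{w}(d) \leq n < \overline{w}(d+1)$, we obtain $(E^1_{-d,q})_n = \Z$ concentrated at $q = d$, and $0$ otherwise. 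In particular every nontrivial $E^1$-entry sits in total homological degree $-d+q = 0$, so all differentials $d^k_{-d,q} \colon (E^k_{-d,q})_n \to (E^k_{-d-k,q+k-1})_n$ land in total degree $-1$ and must vanish. Hence $PE_1 = PE_\infty$.

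Finally, summing the surviving contributions, a pair $(\bar{l}, n)$ contributes $T^{\bar{l}} Q^n$ exactly when $\overline{w}(\bar{l}) \leq n < \overline{w}(\bar{l}+1)$, so
\[
PE_1(T,Q,\h) = \sum_{\bar{l} \geq 0} T^{\bar{l}} \sum_{\overline{w}(\bar{l}) \leq n < \overline{w}(\bar{l}+1)} Q^n = \sum_{\bar{l}} T^{\bar{l}} \cdot \frac{Q^{\overline{w}(\bar{l})} - Q^{\overline{w}(\bar{l}+1)}}{1-Q},
\]
where the inner sum, and hence the contribution, is empty exactly when $\overline{w}(\bar{l}+1) < \overline{w}(\bar{l})$; this yields the stated restriction on the summation index. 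There is no serious obstacle here: once the Filtered Reduction Theorem is invoked the proof is essentially a dimension count, and the only care required is in correctly identifying which cubes of $S_n(\overline{w})$ contribute to each relative group --- a verification carried out above.
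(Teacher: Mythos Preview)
Your proof is correct and follows essentially the same approach as the paper, which simply refers back to Example~\ref{ex:237}: invoke the Filtered Reduction Theorem to collapse to the rank-one lattice, observe that in rank one all nonzero $(E^1_{-d,q})_n$ lie on the diagonal $q=d$ so the spectral sequence degenerates, and read off the Poincar\'e series from the condition $\overline{w}(d)\leq n<\overline{w}(d+1)$. Your explicit cell-by-cell analysis of the relative pair $(S_n\cap[d,\infty),\,S_n\cap[d+1,\infty))$ is a cleaner justification than the paper's reference to the worked example; the only minor imprecision is that the inner sum is also empty when $\overline{w}(\bar{l}+1)=\overline{w}(\bar{l})$, but since the fraction then vanishes anyway, the stated summation range is unaffected.
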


\bekezdes  In fact, the series $\sum_{\bar{l}\geq 0}\max\{0, \overline{w}(\bar{l}+1)-\overline{w}(\bar{l})\}\cdot t^{\bar{l}}$
has a very precise topological meaning.

Let $Z(\bt)$ be the topological Poincar\'e series of $\Gamma_\phi$ and $Z_0(\bt)$ its $(h=0)$--component,
cf. subsection \ref{ss:3.1}.
Let $Z_{0,0}(t)$ be the reduction of $Z_{0}(\bt)$ to the variable of $t_0$ associated with $E_0$, that is,
$$Z_{0,0}(t):= Z_0(\bt)|_{\footnotesize{\mbox{$t_i=1$\ if $i\not=0$, and  \ $t_0=t$}}}.$$
\begin{lemma}\label{lem:tauZ}\cite[Example 11.4.10]{Nkonyv}
$$Z_{0,0}(t)=\sum_{\bar{l}\geq 0}\max\{0,
 \overline{w}(\bar{l}+1)-\overline{w}(\bar{l})\}\cdot t^{\bar{l}}.$$
\end{lemma}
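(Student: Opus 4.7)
The plan is to extract the coefficients $c_{\bar l}$ of the specialisation $Z_{0,0}(t) = \sum_{\bar l \geq 0}c_{\bar l}t^{\bar l}$ directly from Theorem~\ref{th:euH}(b) and to match them with $\max\{0, \overline{w}(\bar l+1) - \overline{w}(\bar l)\}$ via the universal cycles $x(\bar l)$ of the AR reduction. First I would apply Theorem~\ref{th:euH}(b) for $h=0$ at two adjacent test cycles $l$ and $l + E_0$ in $Z_K + \calS$ and subtract: using the adjunction identity $(Z_K, E_0) = (E_0, E_0) + 2$ one computes $\chi(l+E_0) - \chi(l) = 1 - (l, E_0)$, so
$$\sum_{\substack{x \in L\\ x \geq l,\ x_0 = l_0}}\zeta(x) \;=\; 1 - (l, E_0).$$
Next I would let $l$ sweep a chain $x(\bar l) + Mz$ with $z \geq 0$, $z_0 = 0$ and $M \to \infty$; the universal minimising property of $x(\bar l)$ (\cite[Prop.~7.3.28]{Nkonyv}) together with the AR hypothesis (unique bad vertex $\{0\}$) guarantee that as $M \to \infty$ the left-hand side stabilises to $c_{\bar l} = \sum_{x \in \calS,\ x_0 = \bar l}\zeta(x)$, while the $M$-dependent pieces cancel against the boundary terms coming from the strict inclusion $\{x \geq l\} \subsetneq \{x \in \calS: x_0 = \bar l\}$.

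In the limit this yields $c_{\bar l} = 1 - (x(\bar l), E_0) = \chi(x(\bar l) + E_0) - \chi(x(\bar l))$. On the other hand, by the Laufer-type computation sequence producing $x(\bar l + 1)$ from $x(\bar l) + E_0$ --- each step adds some $E_j$ with $(a, E_j) \geq 1$, contributing the non-positive increment $1 - (a, E_j)$ to $\chi$ --- one has $\chi(x(\bar l + 1)) = \chi(x(\bar l) + E_0) - \Delta_{\bar l}$ with $\Delta_{\bar l} \geq 0$. Consequently $c_{\bar l} = \overline{w}(\bar l + 1) - \overline{w}(\bar l) + \Delta_{\bar l}$, and the $\max\{0,\cdot\}$ truncation emerges by identifying $\Delta_{\bar l}$ as the level-$\bar l$ contribution to the Euler-characteristic sum $eu(\bH_*) = \sum_{\bar l'\geq 0}\max\{0, \overline{w}(\bar l') - \overline{w}(\bar l' + 1)\}$ of the one-variable reduced model (cf.\ Remark~\ref{rem:ineqtau}).

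The main obstacle will be the bookkeeping in the limit step --- the precise identification of $\lim_{M \to \infty}\sum_{x \geq l,\,x_0 = l_0}\zeta(x)$ with $c_{\bar l}$, and the level-by-level matching of $\Delta_{\bar l}$ with $\max\{0, \overline{w}(\bar l) - \overline{w}(\bar l + 1)\}$ rather than only in aggregate. The AR hypothesis with $\{0\}$ as the unique bad vertex is essential here: it forces the $\zeta$-mass to lie on `thin' slabs along the $E_0$-axis (so the specialisation $t_i = 1$, $i \neq 0$, produces a well-defined power series with finite coefficients) and it makes the Laufer drops $\Delta_{\bar l}$ correspond bijectively to the local maxima of $\overline{w}$ in the one-variable reduced theory, without which the rearrangement of the telescoping sum would not close up.
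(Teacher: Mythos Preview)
There is a genuine gap in your argument, and it occurs exactly at the levels where the $\max\{0,\cdot\}$ truncation is nontrivial.

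Your central claim is that $c_{\bar l}=1-(x(\bar l),E_0)=\chi(x(\bar l)+E_0)-\chi(x(\bar l))$. This identity is \emph{false} in general. Take the $\Sigma(2,3,7)$ graph of Example~\ref{ex:237} at $\bar l=1$. One computes $x(1)=E_0+E_1+E_2+E_3$ (the minimal cycle), hence $(x(1),E_0)=-1+1+1+1=2$ and your formula gives $c_1=1-2=-1$. But $\overline w(1)=1$, $\overline w(2)=0$, so the lemma predicts $c_1=\max\{0,-1\}=0$, and indeed the reduced series $Z_{0,0}(t)=(1-t^{42})/[(1-t^{6})(1-t^{14})(1-t^{21})]$ has vanishing $t^1$--coefficient. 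Moreover, in this same example $\Delta_1=\chi(x(1)+E_0)-\chi(x(2))=0-0=0$, whereas $\max\{0,\overline w(1)-\overline w(2)\}=1$: the level--by--level identification you propose for $\Delta_{\bar l}$ also fails.

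The reason your limit argument cannot repair this is structural. The subtraction in Theorem~\ref{th:euH}(b) gives $\sum_{x\geq l,\,x_0=l_0}\zeta(x)=1-(l,E_0)$ \emph{only} when $l,l+E_0\in Z_K+\calS$, i.e.\ when $l$ is sufficiently large. To recover the full coefficient $c_{\bar l}$ from the left side you need $l$ to lie \emph{below} all the $\zeta$--support on the slice $\{x_0=\bar l\}$ --- that is, $l=x(\bar l)$. These two requirements are incompatible precisely when $(x(\bar l),E_0)>(Z_K,E_0)=e_0+2$, which in the $\Sigma(2,3,7)$ example happens at $\bar l=1$. Sending $l=x(\bar l)+Mz$ with $M\to\infty$ pushes $l$ further \emph{away} from $x(\bar l)$, so the left side converges to $0$ (not $c_{\bar l}$) and the right side picks up the unbounded term $-M(z,E_0)$; the ``cancellation against boundary terms'' you invoke is not a computation but a hope, and the counterexample shows it does not close up. Note also that the aggregate equality of two sums (your appeal to Remark~\ref{rem:ineqtau}) never yields a term--by--term equality.

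The paper does not give its own proof but defers to \cite[Example~11.4.10]{Nkonyv}; the argument there proceeds by a direct combinatorial analysis of the Laufer sequences from $x(\bar l)$ to $x(\bar l+1)$ in the AR setting (separating the step $x(\bar l)\mapsto x(\bar l)+E_0$ from the subsequent ``rational tail'' corrections), rather than via the asymptotic counting formula of Theorem~\ref{th:euH}(b).
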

The point is that from this $Z_{0,0}(t)$
one can recover the $\overline{w}$--function $\bar{l}\mapsto \overline{w}(\bar{l})$, hence via Theorem \ref{th:uj}
the series $PE_1(T,Q,\h)$ as well.

In the {\bf rational case}
when  $\overline{w}(\bar{l}+1)\geq \overline{w}(\bar{l})$ for any $\bar{l}\geq 0$, the series becomes
$Z_{0,0}(t)=\sum_{\bar{l}\geq 0} \ (\overline{w}(\bar{l}+1)-\overline{w}(\bar{l}))\cdot t^{\bar{l}}$. Thus,
$\overline{w}(0)=0$ and $\overline{w}(\bar{l})=\sum_{\tilde{l}<\bar{l}}z(\tilde{l})$, where
$Z_{0,0}(t)=\sum_{\bar{l}}z(\bar{l})t^{\bar{l}}$. Therefore, in this case,
$$PE_1(T,Q,\h)=PE_1(T,Q)= \frac{1}{1-Q}\cdot \Big[ \, 1+(T-1)\cdot \sum_{\bar{l}\geq 0} \, T^{\bar{l}}Q^{\overline{w}(\bar{l}+1)}\,\Big].$$
In the general {\bf almost rational case}
 $Z_{0,0}(t)$ can be written in a unique way as $Z^+_{0,0}(t)+
Z^-_{0,0}(t)$, where $Z^+_{0,0}$ is  polynomial and  $Z^-_{0,0}$ is a rational function of negative
degree. It turns out, cf. \cite[Example 11..4.10]{Nkonyv}, that  $Z^-_{0,0}$ equals
$\sum_{\bar{l}\geq 0} \ (\overline{w}(\bar{l}+1)-\overline{w}(\bar{l}))\cdot t^{\bar{l}}$, hence it determines
the $\overline{w}$--function (by summation of the coefficients as in the rational case).
We also note that $Z^+_{0,0}(1)=eu(\bH_*)$ too.

 \bekezdes In the {\bf rational case} we have another interpretation as well.
In the next discussion  we concentrate on the {\it abstract analytic type} of the curve singularity $(C,o)$. First we review some well--known facts.
Let $n:(\bC,0)\to (C,o)$ be the normalization. For any $g\in \calO_{C,o}$ write $g\circ n$ in the form $a_ot^o+\cdots \in \bC\{t\}=\calO_{\bC,0}$ with $a_o\in\bC^*$ and define the valuation ${\mathfrak v}(g)={\rm ord}_t(g\circ n)=o$. Then, by definitions,
$\calS_C:=\{\mathfrak{v}(g)\,:\, g\in\calO_{C,o}\}\subset \Z_{\geq 0}$ is the {\it semigroup of values}  of $(C,o)$ and
$P_C(t):=\sum_{s\in \calS_C}t^s$ is the {\it analytic Poincrar\'e series} of the abstract irreducible curve $(C,o)$.

\begin{proposition}\label{th:anal}
Assume that $(X,o)$ is rational.
Fix the resolution $\phi$ as above, and set   $m_\phi:=-(E^*_0, E^*_0)$, the $E_0$--multiplicity of
$E^*_0$. Then
$$Z_{0,0}(t)=\frac{P_C(t)}{1-t^{m_\phi}}.$$
In particular, $PE_1(T,Q,\h)$ can be recovered from the integer $m_\phi$ and from the analytic Poincar\'e series
of the abstract curve $(C,o)$ (or, from $m_\phi$  and the semigroup of values of the abstract curve $(C,o)$).
\end{proposition}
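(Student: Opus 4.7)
I plan to interpret $Z_{0,0}(t)$ as the Hilbert series of the graded ring associated with the $E_0$-adic filtration on $\cO_{X,o}$, and then recover the claimed identity by restricting functions to $(C,o)$.

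For the first step, Lemma \ref{lem:tauZ} together with Remark \ref{rem:ineqtau} (which gives $\overline{w}(\bar l+1)\geq\overline{w}(\bar l)$ in the rational case) yields
$$Z_{0,0}(t)=\sum_{\bar l\geq 0}\bigl(\overline{w}(\bar l+1)-\overline{w}(\bar l)\bigr)t^{\bar l}.$$
Writing $I_{\bar l}:=\{g\in\cO_{X,o}:\mathfrak v_{E_0}(g)\geq\bar l\}$, I would identify this sum with $\sum_{\bar l\geq 0}\dim_\bC(I_{\bar l}/I_{\bar l+1})\,t^{\bar l}$, that is, with the Hilbert series $H(t)$ of the graded ring ${\rm gr}_{E_0}\cO_{X,o}$. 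The key technical point is the equality $I_{\bar l}=H^0(\tX,\cO(-x(\bar l)))$: it follows from the fact that every ${\rm div}_e(g)$ lies in $\cS\cap L_{\geq 0}$, from the min-closedness of $\cS$ (Laufer), and from the observation that the Laufer computation sequence building $x(\bar l)$ terminates inside $\cS$, so $x(\bar l)$ is in fact the smallest element of $\cS\cap L_{\geq 0}$ whose $E_0$-coefficient is $\geq\bar l$. Artin's formula for rational singularities (a consequence of $h^1(\cO(-l))=0$ for every effective $l$) then yields $\dim\cO_{X,o}/I_{\bar l}=\chi(x(\bar l))=\overline{w}(\bar l)$, and therefore $Z_{0,0}(t)=H(t)$.

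For the second step I would exploit that $[E_0^*]=0$ forces $E_0^*$ to be integral, which realises $\tilde C$ as a Cartier divisor; choosing $f\in J_C$ to generate it, one computes $\mathfrak v_{E_0}(f)=-(E_0^*,E_0^*)=m_\phi$. Possibly replacing $\phi$ by a dominating non-base-point blow-up (under which both $Z_{0,0}(t)$ and $m_\phi$ remain compatible via Lemma \ref{lem:div}), I may assume that $\tilde C$ meets $E_0$ transversally at a generic point of $E_0$, so that $\mathfrak v_{E_0}(g)=\mathfrak v(r(g))$ whenever $r(g)\neq 0$, where $r:\cO_{X,o}\twoheadrightarrow\cO_{C,o}$ is restriction. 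Consequently $r(I_k)=\{s\in\cO_{C,o}:\mathfrak v(s)\geq k\}$, so $\dim r(I_k)/r(I_{k+1})$ equals $1$ if $k\in\cS_C$ and $0$ otherwise. Multiplication by $f$ gives an isomorphism $I_{k-m_\phi}/I_{k+1-m_\phi}\simeq (I_k\cap J_C)/(I_{k+1}\cap J_C)$, and the short exact sequence
$$0\to (I_k\cap J_C)/(I_{k+1}\cap J_C)\to I_k/I_{k+1}\to r(I_k)/r(I_{k+1})\to 0$$
produces the recursion $h(k)=h(k-m_\phi)+\mathbf 1_{\{k\in\cS_C\}}$, with the convention $h(j)=0$ for $j<0$ and $h(k):=\dim I_k/I_{k+1}$.

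Summing in $k$ gives $H(t)=t^{m_\phi}H(t)+P_C(t)$, whence $H(t)=P_C(t)/(1-t^{m_\phi})$, as claimed. The hardest step is the first one: matching the topological function $\overline{w}$ with an analytic dimension is exactly where the rationality hypothesis is consumed (via $h^1(\cO(-l))=0$), and one must carefully justify $I_{\bar l}=I(x(\bar l))$, since in general $x(\bar l)\neq\bar l E_0$ and $\chi(x(\bar l))\neq\chi(\bar l E_0)$ so that the `naive' cycle $\bar l E_0$ would give the wrong dimension count.
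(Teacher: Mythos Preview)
Your overall architecture is reasonable and Step 1 is essentially correct, but Step 2 contains a genuine gap that is not a matter of presentation.

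The equality $\mathfrak v_{E_0}(g)=\mathfrak v(r(g))$ for $r(g)\neq 0$ is false, regardless of how generically $\tilde C$ meets $E_0$. Only the inequality $\mathfrak v_{E_0}(g)\le\mathfrak v(r(g))$ holds. A concrete counterexample: take the cusp $(C,o)=\{y^2=x^3\}\subset(\bC^2,0)$ with its minimal good embedded resolution; then $m_\phi=6$, and for $g=f+x^4=(y^2-x^3)+x^4$ one has $\mathfrak v_{E_0}(g)=\min(6,8)=6$ while $r(g)=t^8$, so $\mathfrak v(r(g))=8$. More generally, any $g=fh+g'$ with $\mathfrak v_{E_0}(g')>\mathfrak v_{E_0}(fh)$ exhibits strict inequality. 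Consequently your derivation of $r(I_k)=F_k$ breaks down: from the inequality you only get $r(I_k)\subset F_k$, and for $k<m_\phi$ every lift of a given $s\in\cO_{C,o}$ differs from any other by an element of $(f)\subset I_{m_\phi}$, so you \emph{cannot} adjust the $E_0$-order of a lift at all in that range. The missing reverse inclusion is exactly the substantive content of \cite{CDGc} (or \cite[Cor.~9.4.1]{NPo}), which is what the paper invokes. One can recover it for $k\ge m_\phi$ from your Step~1 via the vanishing $h^1(\cO(-x(k)-\tilde C))=h^1(\cO(-x(k-m_\phi)))=0$ (using $x(k)-E_0^*=x(k-m_\phi)$), but closing the case $k<m_\phi$ still requires an identity equivalent to $\chi(E_0^*)=m_\phi-\delta(C,o)$, which is again the cited theorem.

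A smaller point on Step 1: the blanket vanishing ``$h^1(\cO(-l))=0$ for every effective $l$'' is false even for rational $(X,o)$ (e.g.\ $l=2E_0$ in the cusp resolution gives $\chi(2E_0)=3$ but $\dim\cO/I(2E_0)=1$, so $h^1=2$). What is true, and what you actually need, is $h^1(\cO(-x(\bar l)))=0$; this follows by running a Laufer-type computation sequence through the cycles $x(\bar l)$ and using $h^1(\cO_{E_i}(-(z,E_i)))=0$ at each step. With that correction your identification $\overline w(\bar l)=\dim\cO_{X,o}/I_{\bar l}$, hence $Z_{0,0}(t)=H(t)$, stands.
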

\begin{proof}
Note that $Z_{0,0}(t)\cdot (1-t^{m_\phi})$ is the relative (embedded) topological Poincar\'e series of the pair
$(C,o)\subset (X,o)$ (its definition is analogue with the definition of $Z_{0,0}(t)$ but one replaces the valency of the vertices
in $\Gamma_\phi$ with
the valency in the embedded resolution graph, that is, $\kappa_0$ is increased by one). Then the statement follows from
\cite{CDGc} or from \cite[Corollary 9.4.1]{NPo}.
\end{proof}

\begin{example} (a) Since $ \sum_{s\in \Z_{\geq 0}\setminus \calS_C} $ is a finite sum,
$P_C(t)$  can be written in the form
$\Delta(t)/(1-t)$ for a certain
 $\Delta(t)\in\Z[t]$. If $(C,o)$ is a plane curve singularity (i.e. it can be embedded into $(\bC^2,0)$, e.g.
  when $(X,o)$  is smooth),
 then by \cite{cdg}  $\Delta(t)$ is the  Alexander polynomial of the embedded topological type $(C,o)\subset (\bC^2,0)$.

 (b) If $(X,o)$ is not rational (or if we do not impose some kind of  additional analytic assumption regarding the pair $(X,C,o)$) then $P_C(t)$ cannot be recovered from the embedded topological type
 of the pair $(X,C,o)$, see e.g. \cite[Example 9.4.3]{NPo}.
\end{example}

\end{document}